\numberwithin{equation}{section} 
\theoremstyle{definition}
\newtheorem{defi}{Definition}[section]
\newtheorem{ex}[defi]{Example}
\theoremstyle{plain}
\newtheorem{thm}[defi]{Theorem}
\theoremstyle{plain}
\newtheorem*{thm*}{Theorem}
\theoremstyle{plain}
\theoremstyle{plain}
\theoremstyle{plain}
\newtheorem{prop}[defi]{Proposition}
\theoremstyle{plain}
\newtheorem{lemma}[defi]{Lemma}
\theoremstyle{plain}
\newtheorem{cor}[defi]{Corollary}
\theoremstyle{plain}
\theoremstyle{plain}
\newtheorem*{question*}{Question}
\theoremstyle{definition}
\newtheorem{rem}[defi]{Remark}
\theoremstyle{thm}
\definecolor{ao(english)}{rgb}{0.0, 0.5, 0.0}
\DeclareMathOperator{\Log}{\rm Log}
\DeclareMathOperator{\SL}{\rm SL}
\DeclareMathOperator{\SO}{\rm SO}
\DeclareMathOperator{\Sym}{\rm Sym}
\newcommand{\Mat}{\text{Mat}}
\newcommand{\argdot}{\,\cdot\,}
\newcommand{\di}{\text{d}}
\DeclareMathOperator{\rank}{\rm rk}
\newcommand{\ZZ}{\mathbb{Z}}
\newcommand{\RR}{\mathbb{R}}
\newcommand{\QQ}{\mathbb{Q}}
\newcommand{\CC}{\mathbb{C}}
\newcommand{\HH}{\mathbb{H}}
\newcommand{\weil}{\omega}
\DeclareMathOperator{\Gr}{\rm Gr}
\DeclareMathOperator{\Mp}{\rm Mp}
\newcommand{\lambdatwo}{\eta} 
\newcommand{\primenum}{\mathfrak{p}} 
\newcommand{\brK}{M} 
\newcommand{\prol}{\pi} 
\newcommand{\intfunct}{{\mathcal{I}_{\aalpha}}} 
\DeclareMathOperator{\KMliftbase}{\Lambda_{\rm KM}}
\DeclareMathOperator{\KMliftbasel}{\Lambda_{\rm KMF,\ell}}
\newcommand{\hermdom}{\mathcal{D}}
\newcommand{\KMoper}{\mathcal{D}^{p,q}}
\newcommand{\FFa}{F_{\aalpha}}
\newcommand{\hha}{h_{\aalpha}}
\newcommand{\sqH}[1]{H^{#1}_{sq}} 
\newcommand{\locsys}[2]{\widetilde{\Sym}{}^{#1}(#2)} 
\newcommand{\wrep}{\weil} 
\newcommand{\primen}{\primenum}
\newcommand{\bbeta}{\underline{\indextwo}} 
\newcommand{\bbetau}{\overline{\beta}} 
\newcommand{\ggammau}{\overline{\gamma}} 
\newcommand{\FFg}{F^{\ggammau}}
\newcommand{\intfunctg}{{\mathcal{I}^{\ggammau}}} 
\newcommand{\indextwo}{\beta} 
\newcommand{\basee}{\mathfrak{e}} 
\newcommand{\dwrep}[1]{\rho_{#1}} 
\newcommand{\aalpha}{\underline{\alpha}} 
\newcommand{\aalphau}{\overline{\alpha}} 
\newcommand{\genU}{u} 
\newcommand{\genUU}{u'} 
\newcommand{\genvec}{v} 
\newcommand{\basevec}{e} 
\newcommand{\Gpol}{\mathcal{Q}} 
\newcommand{\pol}{\mathcal{P}} 
\newcommand{\polw}[3]{\mathcal{P}_{#1,#2,#3}} 
\newcommand{\borw}{g^\#}
\newcommand{\boralpha}{\delta}
\newcommand{\borbeta}{\nu}
\newcommand{\borgamma}{\epsilon} 
\newcommand{\borK}{M} 
\DeclareMathOperator{\bigO}{\rm O}
\def\be{\begin{equation}}
\def\ee{\end{equation}}
\def\bes{\begin{equation*}}
\def\ees{\end{equation*}}
\def\ba{\be\begin{aligned}}
\def\ea{\end{aligned}\ee}
\def\bas{\bes\begin{aligned}}
\def\eas{\end{aligned}\ees}
\author{Ingmar Metzler}
\address{Department of Mathematics, ETH Zurich, 8092 Zurich, Switzerland}
\email{ingmar.metzler@math.ethz.ch}
\author{Riccardo Zuffetti}
\address{
Fachbereich Mathematik, Technische Universität Darmstadt, Schlossgartenstraße 7, D–
64289 Darmstadt, Germany.
}
\email{zuffetti@mathematik.tu-darmstadt.de}
\title{Injectivity of the genus 1 Kudla--Millson lift on locally symmetric spaces}
\newcommand{\myblue}{black}
\begin{document}
	\maketitle
	\begin{abstract}
	Let~$L$ be an even indefinite lattice.
	We show that if~$L$ splits off a hyperbolic plane and a scaled hyperbolic plane, then the Kudla--Millson lift of genus~$1$ associated to~$L$ is injective.
	Our result includes as special cases all previously known injectivity results on the whole space of elliptic cusp forms available in the literature.
	In particular, we also consider the Funke--Millson twist of the lift.
	Further, we provide geometric applications on locally symmetric spaces of orthogonal type.
	\end{abstract}
	\tableofcontents

	\section{Introduction}
	
	In their celebrated article~\cite{hirzebruchzagier}, Hirzebruch and Zagier proved that the generating series of intersection numbers of special divisors on Hilbert modular surfaces is a modular form.
	This result has been generalized by Kudla and Millson \cite{kumi;harmI}, \cite{kumi;harmII}, \cite{kumi;intnum} to the locally symmetric spaces arising from~$\bigO(p,q)$.
	They constructed a theta function that behaves as a modular form in one variable and is a differential form on the symmetric space in the second variable.
	Further, they proved that the \emph{cohomology class} of such a differential form behaves as a \emph{holomorphic modular form}.
	In fact, this class equals the generating series of cohomology classes of the so-called \emph{special cycles} on the locally symmetric space.
	
	Information on the behavior of the Kudla--Millson theta function provides geometric data of the associated locally symmetric space.
	For instance, Funke and Millson~\cite{fm;boundarybeha} proved the non-vanishing of the (cohomology class of the) Kudla--Millson theta function, deducing that infinitely many special cycles have non-vanishing cohomology classes.
	Funke and Kudla~\cite{funkekudla} obtained a ``mock theta'' decomposition of the theta integrals obtained by integrating compactly supported forms against the Kudla--Millson theta function.
	Restricting to the hyperbolic case, i.e.~assuming~$q=1$, they recovered Zwegers' mock theta function~\cite{zwegers}.
	Under the assumption that the locally symmetric space admits a complex structure, i.e.~either~$p=2$ or~$q=2$, several properties on cones of special cycles have been illustrated~\cite{brmo}, \cite{zufcones}, \cite{zufequidistr}.
	
	Instead of integrating the Kudla--Millson theta function against a compactly supported differential form on the orthogonal side, it is possible to pair it against a cusp form on the symplectic side by means of the Petersson inner product.
	We refer to this linear map from cusp forms to closed differential forms as the \emph{Kudla--Millson lift}.
	
	The \emph{injectivity} of the Kudla--Millson lift is of interest for several reasons.
	For instance, it implies a converse theorem for Borcherds' products~\cite{brfutwo}, \cite{brfu}, \cite{bruinier;converse}, and provides a way of computing the cohomology groups of orthogonal Shimura varieties~\cite{blmm;conj}, \cite{kieferzuffetti}.
	
	The known injectivity results in the Hermitian case are quite general and cover many cases of interest.
	For general signature~$(p,q)$, however, the overall behavior of the lift is far from being well-understood.
	The first results in this direction were given by Bruinier and Funke in~\cite{brfu}, where the	injectivity has been proven under the assumption that the lattice giving rise to the locally symmetric space is \emph{unimodular}.
	Recently, Stein~\cite{stein} proved that such a lift is injective also if the lattice is \emph{maximal} and of sufficiently large rank. 
	
	The goal of this paper is to extend the injectivity results in general signature to the same level of generality available for the Hermitian case.
	We do so by implementing the strategy given by the second author in~\cite{zufunfolding} to the general signature case.
	Our result, which we make explicit below, improves all previously known injectivity results on the space of elliptic cusp forms.
	
	We denote the hyperbolic plane by~$U$, and by~$U(N)$ the lattice~$U$ whose quadratic form is rescaled by some positive integer~$N$.
	\begin{thm}\label{thm;mainres}
		Let $L$ be an even indefinite lattice of signature~$(p,q)$.
		\begin{enumerate}[label=(\roman*)]
			\item If~$L \cong M \oplus U(N) \oplus U$ for some even lattice~$M$ and some positive integer~$N$, then the Kudla--Millson lift associated to~$L$ is injective.
			\item Let~$q=1$.
			If~$L \cong M \oplus U$ for some positive definite lattice~$M$ and~$M\otimes\ZZ_\primen$ splits off a hyperbolic plane over~$\ZZ_\primen$ for every prime~$\primen \in \ZZ$, then the lift is injective.
			\item Let~$p=1$.
			Then the Kudla--Millson lift associated to~$L$ is identically zero.
		\end{enumerate}
	\end{thm}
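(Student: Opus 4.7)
The plan treats the three parts separately, with the main effort concentrated on~(i); part~(iii) is dispatched by a direct low-rank computation, and part~(ii) is a specialisation of~(i). When $p=1$, the symmetric domain has real dimension $q$ and the Kudla--Millson Schwartz form of signature $(1,q)$ is a $1$-form. I would unpack the explicit formula for $\varphi_{KM}$ in this rank (working in the Schr\"odinger model, as in the original Kudla--Millson construction) and show that its Petersson pairing with any cusp form vanishes identically, most likely because the resulting $1$-form is $d$-exact with a smooth primitive, or equivalently because the Fourier coefficients of the theta kernel that would contribute to the pairing all vanish at this low rank.

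For part~(i), I would follow the strategy of the second author in~\cite{zufunfolding}, originally developed in the Hermitian setting, and push it to general signature $(p,q)$. Given a cusp form $f$ whose Kudla--Millson lift is zero, the aim is to reconstruct each Fourier coefficient $a_f(n)$ of $f$ from the lift and conclude $f\equiv 0$. The splitting $L = M \oplus U(N) \oplus U$ supplies a rank-$2$ isotropic sublattice of $L$ and hence a $0$-dimensional boundary component of the associated locally symmetric space. Along the boundary attached to the ``outer'' hyperbolic plane~$U$, the Kudla--Millson theta function $\theta_{KM}$ admits a Fourier--Jacobi expansion whose coefficients are Jacobi-type theta functions living on the Lorentzian sublattice $M \oplus U(N)$. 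Pairing against $f$ and unfolding the Petersson integral in the spirit of Rankin--Selberg/Hecke with respect to the $U$-factor collapses the sum onto its diagonal term, exposing the coefficients of $f$ inside a tractable integral. The second hyperbolic plane $U(N)$ then provides enough geometric freedom to isolate each $a_f(n)$: by evaluating the lift at a well-chosen point of the locally symmetric space, or equivalently by integrating it against a compactly supported test form attached to a vector of $M$, one obtains a non-zero multiple of $a_f(n)$, and vanishing of the lift forces $a_f(n) = 0$ for every $n \geq 1$.

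Part~(ii) then follows by specialisation to $q=1$. In this case the symmetric space is a real hyperbolic orbifold, and the role played by the global $U(N)$-summand in~(i) can be replaced prime-by-prime by the hypothesis that $M\otimes\ZZ_\primen$ splits off a hyperbolic plane over $\ZZ_\primen$ for every prime $\primen$; the resulting local Witt decompositions suffice to carry out the same Fourier coefficient extraction.

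The main obstacle, in my view, will be the unfolding step in general signature: one needs absolute convergence of the Petersson integral after Fourier--Jacobi expansion, uniformly enough to justify the interchange of summation and integration, together with careful bookkeeping of the Weil representation data attached to the non-trivial discriminant form of $U(N)$ (cyclic of order $N$), which was absent both in the unimodular Bruinier--Funke setting and in the Hermitian argument of~\cite{zufunfolding}. Once the theta kernel has been factorised compatibly with the chosen cusp and the discriminant twist is handled, the remaining steps should largely mirror the Hermitian case.
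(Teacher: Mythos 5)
Your overall strategy --- unfold the Petersson integral along the hyperbolic plane $U$ in the spirit of Rankin--Selberg/Borcherds and extract Fourier coefficients of $f$ --- is the right one and matches the paper's approach in outline, but the plan has two substantive gaps. First, you never address how to reduce the vanishing of the lift, which is a \emph{differential form} on the locally symmetric space, to the vanishing of scalar quantities that can be unfolded. The paper does this by rewriting the Kudla--Millson Schwartz form in terms of Hermite polynomials, obtaining $\Theta(\tau,z,\varphi_{\text{\rm KM}})=\sum_{\aalpha}\Theta_L(\tau,g,\pol_{\aalpha})\otimes g^*(\omega_{\aalpha})$ where the $\omega_{\aalpha}\in\bigwedge^qT^*_{z_0}\hermdom$ are linearly independent; only then does ``the lift vanishes'' become equivalent to ``each scalar defining integral $\intfunct(g)$ vanishes for every $g$,'' and only then can one unfold. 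Your phrase ``integrating against a compactly supported test form'' does not supply this reduction, and without it the Fourier--Jacobi/unfolding step has no scalar integrand to act on. Second, and more seriously, you misassign the role of the inner summand $U(N)$. The unfolding uses only the outer $U$: it produces, for each $\lambda$ in the complement $M\oplus U(N)$ (up to cosets) with $q(\lambda)>0$, an identity expressing a divisor sum of coefficients $c(f_{\lambda/t},q(\lambda)/t^2)$ times an explicitly \emph{strictly positive} integral (after choosing $\aalpha=(\alpha_1,\dots,\alpha_1)$ and a special isometry $g$ that kills all but one polynomial term), whence by induction on divisibility $c(f_{\borgamma},q(\lambda))=0$. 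This only controls coefficients at indices $n$ that are \emph{represented as norms} by the smaller lattice. The summand $U(N)$ is not ``geometric freedom to evaluate at well-chosen points''; it is the arithmetic input that, via Bruinier's newform theory, guarantees every index occurring in the Fourier expansion of $f$ is so represented. Evaluating the lift at points cannot recover $a_f(n)$ for unrepresented $n$, so as written your extraction step fails exactly where the hypothesis $U(N)\oplus U$ (rather than just $U$) is needed.

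Your treatment of (ii) is correct in spirit once the above is fixed: the local splittings of $M\otimes\ZZ_\primen$ replace the global $U(N)$ in guaranteeing representation of norms (the paper cites a local-global representation result for this). For (iii), the mechanism is not exactness of the $1$-form: when $p=1$ the complement of $U$ is \emph{negative definite}, so every $\lambda$ in it has $q(\lambda)\le 0$; since a cusp form has no Fourier coefficients at non-positive indices and the constant term of the unfolded expansion also vanishes (the degree-$(q,0)$ polynomial has no positive variables left to restrict to), every Fourier coefficient of the defining integrals is zero. Your hedge about vanishing Fourier coefficients points in the right direction, but the reason is definiteness of the complementary lattice, not low rank per se.
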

	
	We provide below an overview of the main ideas of the proof of Theorem~\ref{thm;mainres}, as well as some geometric applications.
	Our result covers also the case of lifts with local coefficients, given by some symmetric power~$\Sym^\ell(L\otimes\RR)$ as introduced by Funke and Millson~\cite{fm;cycleswith}.
	This is of interest also in~\cite{brfu} and~\cite{stein}.
	For the sake of simplicity, in this introduction we do not treat such a generalization, i.e.\ we restrict to the case~$\ell=0$, and instead refer to Section~\ref{sec;caseofsymmpowers} for details. 
	
	\subsection{Overview of the main result}
	Let~$L$ be an indefinite even lattice of signature~$(p,q)$.
	We denote by~$(\argdot{,}\argdot)$ the bilinear form on~$L$, and by~$q(\argdot)=(\argdot{,}\argdot)/2$ the associated quadratic form.
	The discriminant group of~$L$ is the quotient~$L'/L$, where~$L'$ is the dual lattice of~$L$.
	The quadratic form~$q$ induces a~$\QQ/\ZZ$-valued quadratic form on~$L'/L$.
	
	The metaplectic double cover~$\Mp_2(\ZZ)$ of~$\SL_2(\ZZ)$ acts on the group algebra~$\CC[L'/L]$ under the Weil representation~$\dwrep{L}$ associated to~$L$.
	This representation is unitary with respect to the standard scalar product~$\langle\argdot{,}\argdot\rangle$ of~$\CC[L'/L]$.
	We denote by~$M^k_L$ and~$S^k_L$ the space of modular forms, respectively cusp forms, of weight~$k$ with respect to~$\dwrep{L}$; see Section~\ref{sec;backgroundvvmodforms} for details.
	Every~$f\in M^k_L$ has a Fourier expansion of the form
	\[
	f(\tau)
	=
	\sum_{\borgamma\in L'/L}
	\sum_{\substack{n\in\ZZ+q(\borgamma)\\ n\ge0}}
	c_n(f_\borgamma)q^n\mathfrak{e}_\borgamma,\qquad\tau\in\HH,
	\]
	where~$q=e^{2\pi i \tau}$ and~$\mathfrak{e}_\borgamma$ is the standard basis vector of~$\CC[L'/L]$ associated to~$\borgamma\in L'/L$.
	
	Let~$V=L\otimes\RR$ and~$G=\SO(V)\cong\SO(p,q)$.
	The symmetric domain~$\hermdom$ associated to~$G$ is the Grassmannian of negative-definite subspaces of dimension~$q$ in~$V$.
	A \emph{locally symmetric space} arising from~$L$ is a quotient space~$X=\Gamma\backslash\hermdom$, for some finite index subgroup~$\Gamma$ of~$\SO(L)$.
	We assume that~$\Gamma$ is torsion-free, so that~$X$ inherits a structure of real manifold of dimension~$pq$.
	
	Kudla and Millson~\cite{kumi;harmI}, \cite{kumi;harmII} constructed a~$G$-invariant Schwartz form~$\varphi_{\text{\rm KM}}$ on~$V$ with values in the space~$\mathcal{Z}^q(\hermdom)$ of smooth closed~$q$-forms on~$\hermdom$.
	Such a Schwartz function may be rewritten in terms of the standard Gaussian of~$V$ and some products of Hermite polynomials of total degree~$q$.
	We provide details on~$\varphi_{\text{\rm KM}}$ in Section~\ref{sec;compDp2}.
	
	Let~$k=(p+q)/2$.
	The \emph{Kudla--Millson theta function} associated to~$L$ is defined as
	\[
	\Theta(\tau,z,\varphi_{\text{\rm KM}})
	= y^{-k/2} \sum_{\borgamma \in L'/L} \sum_{\lambda\in L + \borgamma}\big(\weil(g_\tau)\varphi_{\text{\rm KM}}\big)(\lambda,z)
	\mathfrak{e}_\borgamma
	\]
	for every~$\tau=x+iy\in\HH$ and~$z\in\hermdom$, where~$\weil$ is (the Schrödinger model of) the Weil representation. 
	Such a theta function behaves as a (non-holomorphic) modular form of weight~$k$ with respect to the variable~$\tau$, and is a closed~$q$-form on~$\hermdom$ with respect to the variable~$z$.
	
	The {Kudla--Millson lift} associated to~$L$ is the linear function mapping cusp forms in~$S^k_L$ to their Petersson inner product with~$\Theta(\tau,z,\varphi_{\text{\rm KM}})$, namely
	\[
	\KMliftbase\colon S^k_L\longrightarrow\mathcal{Z}^g(\hermdom),
	\qquad
	f\longmapsto\int_{\SL_2(\ZZ)\backslash\HH}y^k\langle f(\tau),\Theta(\tau,z,\varphi_{\text{\rm KM}})\rangle \frac{dx\,dy}{y^2}.
	\]
	
	It is shown in~\cite{zufunfolding} that if~$p=2$, then~$\Theta(\tau,z,\varphi_{\text{\rm KM}})$ may be rewritten in terms of the Siegel theta functions constructed by Borcherds in~\cite{bo;grass}.
	This served as the starting point of the new proof of the injectivity of~$\KMliftbase$ in the case of \emph{orthogonal Shimura varieties}; see~\cite[Theorem~$7.1$]{zufunfolding}.
	
	In this article we show that a rewriting of~$\Theta(\tau,z,\varphi_{\text{\rm KM}})$ in terms of Siegel theta functions is possible also in general signature, as illustrated below.
	If~$\pol$ is a homogeneous polynomial on~$\RR^{p+q}$, then we denote by~$\Theta_L(\tau,g,\pol)$ the Siegel theta function associated to~$L$ and~$\pol$ constructed by Borcherds~\cite{bo;grass}.
	This is a function of~$\tau\in\HH$ and~$g\in G$; see Section~\ref{sec;Siegthetafu} for a brief introduction.
	In this paper we construct homogeneous polynomials~$\pol_{\aalpha}$ of degree~$q$ such that
	\[
	\Theta(\tau,z,\varphi_{\text{\rm KM}})
	=
	\sum_{\aalpha}\Theta_L(\tau,g,\pol_{\aalpha})
	\otimes
	g^*\big(\omega_{\aalpha}\big).
	\]
	Here~$\aalpha=(\alpha_1,\dots,\alpha_q)$ is a tuple of indices with entries~$\alpha_j\in\{1,\dots,p\}$, the isometry~$g\in G$ is chosen to map~$z$ to a fixed base point~$z_0$ of~$\hermdom$, and~$\omega_{\aalpha}\in\bigwedge^q T^*_{z_0}\hermdom$ are \emph{linearly independent} vectors arising from the very definition of the Kudla--Millson Schwartz function.
	We then deduce that
	\be\label{eq;introliftdecSiegel}
	\KMliftbase(f)
	=
	\sum_{\aalpha}
	\bigg(
	\underbrace{\int_{\SL_2(\ZZ)\backslash\HH}y^{k}
	\big\langle f(\tau),\Theta(\tau,g,\pol_{\aalpha})\big\rangle \frac{dx\,dy}{y^2}}_{\intfunct(g)}
	\bigg)
	\cdot g^*(\omega_{\aalpha}).
	\ee
	We refer to the functions~$\intfunct\colon G\to\CC$ appearing in~\eqref{eq;introliftdecSiegel} as the \emph{defining integrals of the Kudla--Millson lift of~$f$}.
	
	Since the differential form appearing on the right-hand side of~\eqref{eq;introliftdecSiegel} is pointwise written as a combination of linearly independent vectors in~${\bigwedge}^qT^*_z\hermdom$, we deduce that the lift~$\KMliftbase(f)$ vanishes if and only if its defining integrals~$\intfunct$ vanish.

	In this paper we apply the unfolding method of Borcherds~\cite[Section~$5$]{bo;grass} to compute the Fourier expansion of~$\intfunct$. 
	For the sake of simplicity, we assume in this introduction that~$L = M \oplus U$ splits a hyperbolic plane $U$  
	with standard generators $\genU$ and $\genUU$. 
	If~$\genvec\in V$ and~$z$ is a subspace of~$V$, then we denote by~$\genvec_z$ the orthogonal projection of~$\genvec$ to~$z$.
	For every~$z\in\hermdom$, we denote by~$w^\perp$ the orthogonal complement of~$u_{z^\perp}$ in~$z^\perp$.
	We refer to Section~\ref{sec;redsmallerlat} for details on the twist~$\borw$ of~$g\in G$ and the decomposition of~$\pol_{\aalpha}$ in terms of polynomials~$\pol_{\aalpha,\borw,h^+,h^-}$ defined on subspaces of~$V$.
	
	\begin{thm}\label{thm;introFourExp}
	Let~$f\in S^k_L$.
	The defining integrals~$\intfunct$ of the Kudla--Millson lift of~$f$ admit a Fourier expansion of the form
	\[
	\intfunct(g)
	=
	\sum_{\substack{\lambda\in\brK' \\ q(\lambda)\ge 0}} c_\lambda(g) e^{2\pi i (\lambda,\mu)},
	\]
	where~$\mu=-\genUU+\genU_{z^\perp}/2\genU_{z^\perp}^2+\genU_z/2\genU_z^2$.
	If~$\lambda\in\brK'$ is of positive norm, then the Fourier coefficient of index~$\lambda$ of~$\intfunct$ is given by
	\bas
	c_\lambda(g)
	&=
	\frac{\sqrt{2}}{|\genU_{z^\perp}|}
	\sum_{\substack{t \geq 1 \\ t \mid \lambda}} 
		\sum_{h^+=0}^q
		\bigg(\frac{t}{2i}\bigg)^{h^+}
		c\big(f_{\lambda/t},q(\lambda)/t^2\big)
		\int_{0}^{\infty}
		y^{q + (p-5)/2 - h^+}
		\\
		& \quad\times
		\exp\bigg(-\frac{2\pi y \lambda_{w^\perp}^2}{t^2} - \frac{\pi t^2}{2 y\genU_{z^\perp}^2 } \bigg)
		\exp(-\Delta /8\pi y)(\polw{\aalpha,\borw}{h^+}{0})\big(\borw(\lambda/t)\big) dy.
	\eas
	\end{thm}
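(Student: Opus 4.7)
The plan is to carry out Borcherds' unfolding method from \cite[Section~5]{bo;grass}, adapted to the polynomial weight $\pol_{\aalpha}$ and to the vector-valued cusp form $f$. The essential input is the hyperbolic splitting $L \cong M \oplus U$: it enables a rewriting of $\Theta_L(\tau,g,\pol_{\aalpha})$ as a double sum indexed by pairs $(c,d) \in \ZZ^2$ (tracking the $\genU$- and $\genUU$-components of lattice vectors) and vectors in the smaller dual lattice $M'$, whose orbit structure matches the coset decomposition of $\Gamma_\infty\backslash\SL_2(\ZZ)$ and is precisely what makes the unfolding possible.

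The first step is to explicitly rewrite $\Theta_L(\tau,g,\pol_{\aalpha})$ in the spirit of \cite[Theorem~5.2]{bo;grass}, Poisson-summing in the $\genUU$-coordinate. This produces the prefactor $1/(\sqrt{2y}\,|\genU_{z^\perp}|)$, the phase $e^{2\pi i(\lambda,\mu)}$ with $\mu$ the tube-domain basepoint attached to $U$, and the two Gaussian factors associated with the $M$-direction and with the inverse $\genU_{z^\perp}$-direction. Along the way, the polynomial $\pol_{\aalpha}$ is expanded in the orthogonal decomposition $V = \RR\genU_{z^\perp} \oplus \RR\genU_z \oplus w^\perp$ and sorted into bigraded components $\polw{\aalpha,\borw}{h^+}{h^-}$ of degrees $h^+$ in $\genU_{z^\perp}$ and $h^-$ in $\genU_z$, following Section~\ref{sec;redsmallerlat}. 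The Howe operator $\exp(-\Delta/8\pi y)$ then arises naturally as the harmonic projection that converts the polynomial-Gaussian weight into its harmonic counterpart on $w^\perp$.

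Once the $(c,d)$-sum is absorbed into the unfolding, so that the integral over $\SL_2(\ZZ)\backslash\HH$ is replaced by one over the strip $\Gamma_\infty\backslash\HH = \{0 \le x \le 1,\, y > 0\}$, I would substitute the Fourier expansion $f(\tau) = \sum_{\borgamma}\sum_{n>0} c_n(f_\borgamma)\,e^{2\pi i n\tau}\,\mathfrak{e}_\borgamma$. The $x$-integration then forces the matching of $n$ with $q(\lambda)/t^2$, where $t$ runs over positive divisors of $\lambda$ arising from the scaling orbits of the $(c,d)$-action, yielding the divisor sum $\sum_{t\mid\lambda} c(f_{\lambda/t},q(\lambda)/t^2)$ together with the binomial factor $(t/2i)^{h^+}$ and the weight $y^{q+(p-5)/2-h^+}$ obtained from collecting the $y$- and $\genU_z$-scalings in the unfolded kernel. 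The remaining $y$-integral is then exactly the one claimed, with the two exponentials $\exp(-2\pi y\lambda_{w^\perp}^2/t^2)$ and $\exp(-\pi t^2/(2y\genU_{z^\perp}^2))$ issuing respectively from the quadratic form on $M$ and from the Poisson-dualized hyperbolic-plane contribution.

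The principal obstacle is a bookkeeping one: tracking the interaction between the polynomial decomposition of $\pol_{\aalpha}$ and the Borcherds rewriting of $\Theta_L$, and in particular isolating which polynomial components survive for positive-norm Fourier indices. The assertion that only $h^- = 0$ contributes should follow from a direct computation showing that the $\genU_z$-direction enters the unfolded kernel only through a Gaussian factor after Poisson summation, so that any $h^->0$ component produces derivatives of a Gaussian that are annihilated upon pairing with the positive-norm Fourier modes of $f$. A secondary but non-trivial issue is the absolute convergence needed to justify the unfolding itself and the exchange of sum and integral; here the cuspidality of $f$ together with the polynomial-weighted exponential decay of the Siegel theta kernel on the unfolded strip supply the required estimates, extending those of \cite[Section~5]{bo;grass} to our polynomial and vector-valued setting.
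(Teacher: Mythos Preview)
Your approach is essentially the same as the paper's: apply Borcherds' unfolding (Theorem~\ref{thm;fromborchspltheta}, which is Kiefer's Poincar\'e-series packaging of \cite[Theorem~5.2]{bo;grass}) to rewrite the kernel, unfold to the strip $\Gamma_\infty\backslash\HH$, insert the Fourier expansion of $f$ and the defining series of $\Theta_{\brK}$, let the $x$-integral pick out $n=q(\lambda)$, and regroup the sum over $r\ge 1$ as a divisor sum $t\mid\lambda$ with $\lambda\mapsto\lambda/t$. The paper records this as Lemma~\ref{lemma;IntKernelPoincare} and Theorem~\ref{thm;Fourexpofcoef}.

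Two remarks on your write-up. First, your concern about the $h^-$-components is misplaced: the polynomial $\pol_{\aalpha}$ is homogeneous of bidegree $(q,0)$ by construction (Definition~\ref{def;QgenandPgen}), so in the decomposition of Definition~\ref{def;bordefimplpol} one has $\polw{\aalpha,\borw}{h^+}{h^-}=0$ for every $h^->0$ immediately, with no argument about Gaussian derivatives needed; see Remark~\ref{ref;howtorewrourpolgtilde}. Second, the operator $\exp(-\Delta/8\pi y)$ does not enter as a ``harmonic projection'' appearing during the unfolding; it is already built into Borcherds' definition of $\Theta_L(\tau,g,\pol_{\aalpha})$ (Definition~\ref{def;deg1genSiegth}), and the link to the Kudla--Millson kernel is the identity $y^{-q/2}\Gpol_{\aalpha}(\sqrt{y}\,\cdot)=\exp(-\Delta/8\pi y)(\pol_{\aalpha})$ of Lemma~\ref{lemma;auxforFaalp}, which is what lets one write $\FFa=\Theta_L(\tau,g,\pol_{\aalpha})$ in the first place.
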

	The main result of the present paper, namely Theorem~\ref{thm;mainres}, follows from Theorem~\ref{thm;introFourExp}.
	The idea, for the sake of brevity here illustrated under the assumption that~$p>1$, is as follows.
	Suppose that~$f\in S^k_L$ is such that~$\KMliftbase(f)=0$.
	As we remarked above, this implies that the defining integrals~$\intfunct$ of~$\KMliftbase(f)$, and hence their Fourier coefficients, vanish.
	We show that
	\be\label{eq;vanishlatz}
	c(f_{\borgamma},q(\lambda))=0\qquad\text{for every~$\borgamma\in M'/M$ and~$\lambda\in M+\borgamma$ such that~$q(\lambda)>0$}.
	\ee
	This is deduced from Theorem~\ref{thm;introFourExp} by induction on the divisibility of~$\lambda$.
	By applying the newform theory developed by Bruinier, we deduce from~\eqref{eq;vanishlatz} that all Fourier coefficients of~$f$ vanish under the assumptions of Theorem~\ref{thm;mainres}.
	
	\subsection{Some geometric applications}
	The locally symmetric space~$X=\Gamma\backslash\hermdom$ is a complete Riemannian manifold of finite volume.
	We denote by~$H^*(X,\CC)$ and~$\sqH{*}(X,\CC)$ respectively the de Rham and the~$L^2$-cohomology of complex-valued differential forms on~$X$.
	The inclusion of the space of square-integrable forms in the space of smooth forms induces a homomorphism
	\be\label{eq;homocoho}
	\sqH{r}(X,\CC) \longrightarrow H^r(X,\CC)
	\ee
	for every~$r\ge0$.
	In general, the map~\eqref{eq;homocoho} is neither injective nor surjective.
	However, Zucker~\cite{zucker} proved that there exists a constant~$c_G$ such that~\eqref{eq;homocoho} is an isomorphism for all~$r\le c_G$.
	Under this assumption, by Hodge theory, the group~$\sqH{r}(X,\CC)$ is isomorphic to the space of~$L^2$ harmonic $r$-forms; cf.~\cite[Section~$2.4$]{blmm;conj}.
	The lift of a cusp form under~$\KMliftbase$ is a \emph{harmonic}~$q$-form by~\cite[Theorem~$4.1$]{kumi;tubes}, as well as a square-integrable form by~\cite[Proposition~$4.1$]{brfu}.
	These, together with our injectivity result in Theorem~\ref{thm;mainres}, imply the following.
	\begin{cor}\label{cor:boundcoho}
	Let~$X$ be a locally symmetric space arising from an even indefinite lattice~$L$ of signature~$(p,q)$, with~$p>1$, satisfying the hypothesis of Theorem~\ref{thm;mainres}.
	If~$q\leq c_G$, where~$c_G$ is Zucker's constant, then~$\dim H^q(X,\CC)\ge \dim S^k_L$.
	\end{cor}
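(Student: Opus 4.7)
The plan is to chain the injectivity statement of Theorem~\ref{thm;mainres} with Zucker's isomorphism~\eqref{eq;homocoho} and Hodge theory, thereby embedding $S^k_L$ into $H^q(X,\CC)$.

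First, I would observe that the assumption $p>1$ rules out item~(iii) of Theorem~\ref{thm;mainres}, so one of the injectivity statements~(i) or~(ii) applies and the Kudla--Millson lift $\KMliftbase\colon S^k_L\to\mathcal{Z}^q(\hermdom)$ is injective. Because $\Theta(\tau,z,\varphi_{\text{KM}})$ is defined as a sum over $L'/L$-cosets of lattice vectors, it is automatically $\Gamma$-invariant in the variable $z$, so $\KMliftbase$ factors through the space of closed $q$-forms on $X=\Gamma\backslash\hermdom$ without loss of injectivity.

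Next, I would invoke the two analytic properties of $\KMliftbase(f)$ already recalled in the excerpt: it is harmonic by~\cite[Theorem~$4.1$]{kumi;tubes}, and square-integrable on $X$ by~\cite[Proposition~$4.1$]{brfu}. This upgrades the injection to one of $S^k_L$ into the space $\mathcal{H}^q_{(2)}(X)$ of $L^2$-harmonic $q$-forms on $X$. Since $X$ is a complete Riemannian manifold of finite volume, Hodge theory gives $\mathcal{H}^q_{(2)}(X)\cong\sqH{q}(X,\CC)$, and Zucker's theorem under the hypothesis $q\le c_G$ identifies this group with $H^q(X,\CC)$. Chaining these maps yields
\[
\dim S^k_L \;\le\; \dim \mathcal{H}^q_{(2)}(X) \;=\; \dim\sqH{q}(X,\CC) \;=\; \dim H^q(X,\CC),
\]
which is the claimed bound.

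There is no substantial obstacle in this argument, as all the technical inputs are already on record; the only point that requires a brief verification, rather than a genuine proof, is that descent from $\hermdom$ to $X$ preserves injectivity, but this is immediate from the $\Gamma$-invariance built into $\Theta(\tau,z,\varphi_{\text{KM}})$. The real work has all been absorbed into Theorem~\ref{thm;mainres} on the automorphic side and into Zucker's isomorphism on the topological side.
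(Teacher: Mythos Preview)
Your proposal is correct and follows essentially the same approach as the paper: the paper does not give a separate formal proof of this corollary, but the sentence immediately preceding it assembles exactly the same ingredients you use (injectivity from Theorem~\ref{thm;mainres}, harmonicity from~\cite[Theorem~$4.1$]{kumi;tubes}, square-integrability from~\cite[Proposition~$4.1$]{brfu}, Hodge theory identifying $\sqH{q}(X,\CC)$ with $L^2$-harmonic forms, and Zucker's isomorphism~\eqref{eq;homocoho} for $q\le c_G$). Your write-up simply makes the chain of inclusions and identifications explicit.
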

	The case of~$q=1$ is of special interest.
	In this case~$X$ is said to be a \emph{hyperbolic manifold}.
	Since~$c_G=\lfloor\frac{p}{2}\rfloor-1$ by~\cite[Theorem~$6.2$]{zucker}, we may refine Corollary~\ref{cor:boundcoho} as follows.
	\begin{cor}
	Let~$X$ be a hyperbolic manifold arising from an even lattice~$L$ of signature~$(p,1)$.
	Suppose that~$L$ splits off a hyperbolic plane~$U$, and the positive definite sublattice~$\brK=U^\perp\subset L$ is such that~$\brK\otimes\ZZ_\primen$ splits off a hyperbolic plane for every prime~$\primen$.
	If~$p\ge4$, then~$\dim H^1(X,\CC)\ge \dim S^k_L$.
	\end{cor}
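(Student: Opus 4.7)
The plan is to derive this corollary as a direct specialization of Corollary~\ref{cor:boundcoho} to the hyperbolic setting $q=1$, using Zucker's explicit value of the constant $c_G$ for the group $\SO(p,1)$.

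First, I would check that the hypotheses of Theorem~\ref{thm;mainres}(ii) hold. Since $L$ has signature $(p,1)$ and decomposes as $L \cong \brK \oplus U$ with $U$ of signature $(1,1)$, the orthogonal complement $\brK = U^\perp$ is automatically positive definite of rank $p-1$. The assumption that $\brK \otimes \ZZ_\primen$ splits off a hyperbolic plane over $\ZZ_\primen$ for every prime~$\primen$ is precisely the additional condition appearing in part~(ii) with $M = \brK$. Theorem~\ref{thm;mainres}(ii) therefore yields that the Kudla--Millson lift $\KMliftbase \colon S^k_L \to \mathcal{Z}^1(\hermdom)$ is injective, and its image descends to $\Gamma$-invariant square-integrable harmonic $1$-forms on $X$.

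Next, I would invoke Corollary~\ref{cor:boundcoho} with $q=1$. Its hypotheses require $p>1$ and $q \le c_G$. By~\cite[Theorem~$6.2$]{zucker}, for $\SO(p,1)$ one has $c_G = \lfloor p/2 \rfloor - 1$, and the assumption $p \ge 4$ gives $c_G \ge 1 = q$; in particular $p>1$ is automatic. This places $q$ inside the range in which the natural map $\sqH{1}(X,\CC) \to H^1(X,\CC)$ is an isomorphism, so the harmonic $L^2$ representatives produced by $\KMliftbase$ inject into de Rham cohomology. The conclusion of Corollary~\ref{cor:boundcoho} then delivers $\dim H^1(X,\CC) \ge \dim S^k_L$.

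The proof is genuinely a specialization, and there is no substantive obstacle: the deep input is Theorem~\ref{thm;mainres}(ii), while everything else amounts to verifying numerical thresholds. The bound $p \ge 4$ is the sharp assumption needed to make $\lfloor p/2 \rfloor - 1 \ge 1$, which is precisely what allows Zucker's isomorphism to upgrade the injectivity on $\sqH{1}$ to injectivity on $H^1$.
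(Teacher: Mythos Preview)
Your proposal is correct and matches the paper's approach: the paper simply notes that $c_G=\lfloor p/2\rfloor-1$ by Zucker's theorem and obtains the corollary as a specialization of Corollary~\ref{cor:boundcoho} to $q=1$, which is precisely what you do (with a bit more detail in checking the hypotheses of Theorem~\ref{thm;mainres}(ii)).
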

	
	\subsection*{Acknowledgments}
	We are grateful to Jan Bruinier, Jens Funke, Yingkun Li, Paul Kiefer and Oliver Stein for several useful conversations.
	The authors are partially supported by the Collaborative Research Centre TRR~$326$ ``Geometry and Arithmetic of Uniformized Structures'', project number~$444845124$.
	
	\section{The Kudla--Millson Schwartz function}\label{sec;compDp2}
	
	Let~$V$ be a real vector space endowed with a symmetric bilinear form~$(\argdot{,}\argdot)$ of signature~$(p,q)$.
	Its associated quadratic form is defined as $q(\argdot)=(\argdot{,}\argdot)/2$.
	In this section, we provide an explicit formula of the Kudla--Millson Schwartz function~$\varphi_{\text{\rm KM}}$ attached to~$V$, following the exposition of~\cite[Section~$2$ and Section~$4$]{brfutwo} and~\cite[Section~$7$]{ku;algcycle}.\\
	
	We fix once and for all an orthogonal basis~$(\basevec_j)_j$ of~$V$ such that $(\basevec_\alpha,\basevec_\alpha)=1$ for every $\alpha=1,\dots,p$, and~${(\basevec_\mu,\basevec_\mu)=-1}$ for $\mu=p+1, \hdots, p+q$.
	We denote the corresponding coordinate functions by~$x_\alpha$ and~$x_\mu$.
	The choice of the basis~$(\basevec_j)_j$ is equivalent to the choice of an isometry~${g_0\colon V\to\RR^{p,q}}$, where~$\RR^{p,q}$ is the real space~$\RR^{p+q}$ endowed with the standard quadratic form of signature~$(p,q)$.
	
	To $V$ we associate the \emph{Grassmannian} 
	\bes
	\Gr(V)=\{z\subset V : \text{$\dim z=q$ and $(\argdot{,}\argdot)|_z<0$}\} 
	\ees
	with \emph{base point}~$z_0$ spanned by~$(\basevec_{j})_{j=p+1}^{p+q}$.
	The symmetric space arising as~$\hermdom=G/K$, where~$G=\SO(V)$ and~$K$ is the maximal compact subgroup of~$G$ stabilizing $z_0$, may be identified with~$\Gr(V)$; see~\cite[Part~$2$, Section~$2.4$]{1-2-3}. 
	It carries a natural complex structure only if $q=2$ or $p=2$. 
	From now on, we write~$\hermdom$ and~$\Gr(V)$ interchangeably.
	
	For every $z\in \hermdom$, we define the \emph{standard majorant} $(\argdot{,}\argdot)_z$ as
	\be\label{eq;standardmajorant}
	(\genvec,\genvec)_z=(\genvec_{z^\bot},\genvec_{z^\bot})-(\genvec_z,\genvec_z),
	\ee
	where $\genvec=\genvec_z+\genvec_{z^\bot}\in V$ is rewritten with respect to the decomposition~$V=z\oplus z^\bot$.
	
	Let~$\mathfrak{g}$ and~$\mathfrak{k}$ be the Lie algebras of~$G$ and~$K$ respectively, and let $\mathfrak{g}=\mathfrak{p}+\mathfrak{k}$ be the associated Cartan decomposition.
	It is well-known that~$\mathfrak{p}\cong\mathfrak{g}/\mathfrak{k}$ is isomorphic to the tangent space of~$\hermdom$ at the base point~$z_0$.
	With respect to the basis of $V$ chosen above, we have
	\be\label{eq;mathfracpiso}
	\mathfrak{p}\cong\left\{\left(
	\begin{smallmatrix}
	0 & X\\
	X^t & 0
	\end{smallmatrix}
	\right) : X\in\Mat_{p,q}(\RR)\right\}\cong \Mat_{p,q}(\RR).
	\ee
	
	To simplify the notation, we set~$e(t)=\exp(2\pi i t)$ for every~$t\in\CC$, and denote by~${\sqrt{t}=t^{1/2}}$ the principal branch of the square root, so that~$\arg(\sqrt{t})\in(-\pi/2,\pi/2]$.
	If~$s\in\CC$, define~${t^s=e^{s\Log(t)}}$, where~$\Log(t)$ is the principal branch of the logarithm.
	
	We realize the elements of the metaplectic double cover~$\Mp_2(\RR)$ of~$\SL_2(\RR)$ by couples of the form~$(\gamma,\phi(\tau))$, where~$\gamma=\big(\begin{smallmatrix}
			a & b\\
			c & d
		\end{smallmatrix}\big)\in\SL_2(\RR)$ and~$\phi\colon\HH\to\CC$ is a holomorphic function such that~$\phi(\tau)^2=c\tau+d$.
		We consider the elements of~$\SL_2(\RR)$ as elements of~$\Mp_2(\RR)$ by choosing~$\phi(\tau)=\sqrt{c\tau+d}$.

	The usual action~$\tau\mapsto \gamma\cdot\tau = (a\tau + b)/(c\tau + d)$ of~$\SL_2(\RR)$ on~$\HH$ under the Möbius transformation induces an action of~$\Mp_2(\RR)$ on~$\HH$.
	The group law of~$\Mp_2(\RR)$ is given by
	\[
	\big(\gamma_1,\phi_1(\tau)\big)\cdot \big(\gamma_2,\phi_2(\tau)\big)
	=
	\Big(\gamma_1\gamma_2,\phi_1(\gamma_2\cdot\tau)\phi_2(\tau)\Big),
	\qquad\text{for~$(\gamma_j,\phi_j)\in\Mp_2(\RR)$.}
	\]
	\begin{defi}\label{defi;schrmod}	
	We denote by $\wrep$ the Schrödinger model of (the restriction of) the Weil representation 
	of~$\Mp_2(\RR)\times \bigO(V)$ acting on the space~$\mathcal{S}(V)$ of Schwartz functions on~$V$.
	The action of~$\bigO(V)$ is defined as
	\bes
	\wrep(g)\varphi(\genvec)=\varphi\big(g^{-1}(\genvec)\big)
	\ees
	for every~$\varphi\in\mathcal{S}(V)$ and~$g\in \bigO(V)$.
	The action of~$\Mp_2(\RR)$ is given by
	\ba\label{eq;actMp2Schrod}
	\wrep\left(\left(\begin{smallmatrix}
	1 & x\\ 0 & 1
	\end{smallmatrix}\right),1\right)\varphi(\genvec)&=e(x q(\genvec))\varphi(\genvec)\quad\text{for every $x\in\RR$},\\
	\wrep\left(\left(\begin{smallmatrix}
	a & 0\\ 0 & a^{-1}
	\end{smallmatrix}\right),a^{-1/2}\right)\varphi(\genvec)&=a^{(p+q)/2}\varphi(a\genvec),\quad\text{for every $a>0$}\\
	\wrep(S)\varphi(\genvec)&=\sqrt{i}^{p-q}\widehat{\varphi}(-\genvec),
	\ea
	where 
	$S=\big(\big(
		\begin{smallmatrix}
		0 & -1\\ 
		1 & 0
		\end{smallmatrix}
	\big),\tau^{1/2}\big)$, 
	and $\widehat{\varphi}(\xi)=\int_V\varphi(\genvec)e^{2\pi i (\genvec,\xi)}\di v$ is the Fourier transform of $\varphi$.
	\end{defi}
	
	The \emph{standard Gaussian of}~$\RR^{p,q}$ is defined as
	\bes
	\varphi_0(x_1,\dots,x_{p+q})=\exp\Big(\!-\pi \, {\textstyle\sum}_{j=1}^{p+q} \, x_j^2\Big),\qquad\text{for every $(x_1,\dots,x_{p+q})^t\in\RR^{p+q}$}.
	\ees
	The \emph{standard Gaussian of} $V$ is the Schwartz function~$\varphi_0$ as above evaluated on the coordinate functions~$x_1,\dots,x_{p+q}$ of $V$ with respect to the chosen basis $(\basevec_j)_j$.
	Note that we may rewrite it by means of the standard majorant of the base point~$z_0\in\Gr(V)$ as~$\exp\big(-\pi (\argdot {,}\argdot)_{z_0}\big)$.
	It is~$K$-invariant with respect to the action of $\wrep$. 
	
	We denote by $\mathcal{S}(V)^K$ the space of $K$-invariant Schwartz functions on~$V$ and note that
	\be\label{eq;isoschwKinschwKDinf}
	\mathcal{S}(V)^K\cong\big[\mathcal{S}(V)\otimes C^\infty (\hermdom)\big]^G,
	\ee
	where the isomorphism is given by evaluating at the base point $z_0\in \hermdom=\Gr(V)$.

	\begin{ex}
	The isomorphism~\eqref{eq;isoschwKinschwKDinf} maps the standard Gaussian~$\varphi_0\in\mathcal{S}(V)^K$ to $\varphi_0(\genvec,z)=e^{-\pi(\genvec,\genvec)_z}$, where $(\argdot{,}\argdot)_z$ is the standard majorant defined in~\eqref{eq;standardmajorant}.
	\end{ex}

	Next, we define the Kudla--Millson Schwartz function $\varphi_{\text{\rm KM}}\in\big[\mathcal{S}(V)\otimes \mathcal{A}^q (\hermdom)\big]^G$, where we denote by~$\mathcal{A}^q(\hermdom)$ the space of~$q$-forms on~$\hermdom$.
	We remark that
	\be\label{eq;isoschwartzbigwedge}
	\big[\mathcal{S}(V)\otimes \mathcal{A}^q(\hermdom)\big]^G\cong\big[\mathcal{S}(V)\otimes{\bigwedge}^q(\mathfrak{p}^*)\big]^K.
	\ee
	Therefore, a $G$-invariant element $\varphi\in \mathcal{S}(V)\otimes \mathcal{Z}^q(\hermdom)$ may be defined as an element of $\big[\mathcal{S}(V)\otimes{\bigwedge}^q(\mathfrak{p}^*)\big]^K$ and extended to all of~$\hermdom$ via the action of~$G$.
	We follow this idea to define~$\varphi_{\text{\rm KM}}$.
	
	We denote by $X_{\alpha,\mu}$, with $1\le\alpha\le p$ and~$1\le\mu\le q$, the basis elements of~$\Mat_{p,q}(\RR)$ given by matrices with~$1$ at the~$(\alpha,\mu)$-th entry and zero otherwise.
	These elements provide a basis of~$\mathfrak{p}$ under the isomorphism~\eqref{eq;mathfracpiso}.
	Let~$\omega_{\alpha,\mu}$ be the element of the dual basis which extracts the $(\alpha,\mu)$-th coordinate of elements in~$\Mat_{p,q}(\RR)$, and let~$A_{\alpha,\mu}$ denote the left multiplication by~$\omega_{\alpha,\mu}$.
	\begin{defi}\label{defi;KMfunct}
	The function~$\varphi_{\text{\rm KM}}$ is defined by applying the operator
	\be\label{eq;KMOperator}
	\KMoper 
	\coloneqq \frac{1}{2^{q/2}}\prod_{\mu=1}^{q}\bigg[\sum_{\alpha=1}^p\bigg(x_\alpha-\frac{1}{2\pi}\frac{\partial}{\partial x_\alpha}\bigg)\otimes A_{\alpha,\mu}\bigg]
	\ee
	to the standard Gaussian $\varphi_0\otimes 1\in \big[\mathcal{S}(V)\otimes{\bigwedge}^0(\mathfrak{p}^*)\big]^K$.
	\end{defi}

	\subsection{On Hermite polynomials}
	The Kudla--Millson Schwartz function may be rewritten in terms of products of Hermite polynomials.
	For later purpose, we recall here some formulas of such polynomials.
	
	\begin{defi}\label{de;HermitePolynomials}
		Let~$q\in \ZZ_{\geq 0}$.
		The~$q$-th Hermite polynomial~$H_q(x)$ is defined as
		\bes
		H_q(t)=(-1)^q e^{x^2} \frac{d^q}{dx^q}e^{-x^2}.
		\ees
	\end{defi}
	It is well-known that such polynomials satisfy Rodrigues' formula
	\be\label{eq;rodriguesformula}
	H_q(x)=\Big(2x-\frac{d}{dx}\Big)^q\cdot 1.
	\ee
	By induction we may infer  
	\ba\label{eq;Hermitepolynomialexplicit}
	H_q(x) = 
	q! \sum_{l=0}^{\lfloor q/2\rfloor} \frac{(-1)^{l}}{(l)! (q-2l)!} \cdot (2x)^{q-2l}.
	\ea
	As we will see below, these polynomials form building blocks for the Kudla--Millson Schwartz function, due to the following property; compare with \cite[Section V.5]{br;Szego}.

	\begin{lemma}\label{lemma;KMPolynomialonevariable}
		Let~$q \in \ZZ_{\geq 0}$.
		We have 
		\ba
		\Big(x-\frac{1}{2\pi}\frac{d}{dx}\Big)^q e^{- \pi x^2} = 
		\frac{e^{- \pi x^2}}{(2\pi)^{q/2}} H_q(\sqrt{2\pi}x).
		\ea
	\end{lemma}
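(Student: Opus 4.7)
The plan is to move the Gaussian factor $e^{-\pi x^2}$ to the left by conjugation, which converts the raising operator $x-\frac{1}{2\pi}\frac{d}{dx}$ into a rescaled Hermite-type operator, after which Rodrigues' formula \eqref{eq;rodriguesformula} finishes the job.

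First I would establish the conjugation identity
\[
\left(x-\frac{1}{2\pi}\frac{d}{dx}\right) \circ e^{-\pi x^2}
=
e^{-\pi x^2} \circ \left(2x-\frac{1}{2\pi}\frac{d}{dx}\right),
\]
as operators on smooth functions. This is a direct calculation: for any smooth $f$, the product rule gives
\[
\left(x-\tfrac{1}{2\pi}\tfrac{d}{dx}\right)\!\bigl(e^{-\pi x^2}f\bigr)
= x e^{-\pi x^2}f+x e^{-\pi x^2}f-\tfrac{1}{2\pi}e^{-\pi x^2}f'
= e^{-\pi x^2}\!\left(2x-\tfrac{1}{2\pi}\tfrac{d}{dx}\right)\!f.
\]
Iterating $q$ times and applying to the constant function $1$ yields
\[
\left(x-\tfrac{1}{2\pi}\tfrac{d}{dx}\right)^{\!q}e^{-\pi x^2}
= e^{-\pi x^2}\left(2x-\tfrac{1}{2\pi}\tfrac{d}{dx}\right)^{\!q}\cdot 1.
\]

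Next I would carry out the change of variable $y=\sqrt{2\pi}\,x$. Under this substitution $2x=\sqrt{2/\pi}\,y$ and $\frac{1}{2\pi}\frac{d}{dx}=\frac{1}{\sqrt{2\pi}}\frac{d}{dy}$, so that
\[
2x-\tfrac{1}{2\pi}\tfrac{d}{dx}
= \tfrac{1}{\sqrt{2\pi}}\left(2y-\tfrac{d}{dy}\right).
\]
Taking the $q$-th power pulls out an overall factor $(2\pi)^{-q/2}$, and Rodrigues' formula \eqref{eq;rodriguesformula} identifies $\big(2y-\tfrac{d}{dy}\big)^q\cdot 1$ with $H_q(y)=H_q(\sqrt{2\pi}\,x)$. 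Substituting back into the displayed identity yields the claimed formula.

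The computation is essentially routine; the only slightly subtle point is getting the doubling right in the conjugation step (the $x$ inside the bracket becomes $2x$ because differentiating the Gaussian produces an extra term of the same sign as the multiplication operator), so I would be careful to verify that identity explicitly rather than invoke it tacitly. Aside from this, no genuine obstacle is expected.
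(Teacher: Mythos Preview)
Your proof is correct. The conjugation identity, the iteration, and the rescaling $y=\sqrt{2\pi}\,x$ followed by Rodrigues' formula \eqref{eq;rodriguesformula} all check out exactly as you write them.

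The paper itself does not supply a proof of this lemma; it merely states the result with a reference to Szeg\H{o}'s book on orthogonal polynomials. Your argument is therefore more informative than what the paper offers: it is self-contained, uses only the Rodrigues formula already recorded in the paper as \eqref{eq;rodriguesformula}, and makes transparent why the factor $(2\pi)^{-q/2}$ and the argument $\sqrt{2\pi}\,x$ appear. The conjugation trick you use is the standard ``raising operator'' mechanism behind Hermite polynomials, so there is nothing exotic here, but it is the right level of detail for a lemma that the paper relies on later (e.g.\ in Remark~\ref{rem;KMsFunctionpart} and Lemma~\ref{lemma;auxforFaalp}).
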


	\subsection{A formula for the Schwartz function}
	
	We now compute~$\varphi_{\text{\rm KM}}$ explicitly.
	We may rewrite
	\begin{align*}\label{eq;firstcompDp2}
	\varphi_{\text{\rm KM}}
	&=
	\KMoper(\big(\varphi_0\otimes 1\big)
	=
	\frac{1}{2^{q/2}}\Big[\sum_{\aalpha} \prod_{\mu = 1}^{q} \Big(\underbrace{\Big(x_{\alpha_\mu}-\frac{1}{2\pi}\frac{\partial}{\partial x_{\alpha_\mu}}\Big)\otimes A_{{\alpha_\mu}, \mu}}_{(\dagger_{\aalpha,\mu})}\Big)\Big]\big(\varphi_0\otimes 1\big).
	\end{align*}
	Here~$\aalpha$ denotes a collection $(\alpha_1,\hdots,\alpha_q)$ with $\alpha_i \in \{1,\hdots,p\}$. 
	Since the product above
	of the operators~$(\mathsmaller{\dagger_{\aalpha,\mu}})$ is applied componentwise, we deduce that
	\be\label{eq;KMschwrew}
		\varphi_{\text{\rm KM}} 
		=
		\sum_{\aalpha}\underbrace{\frac{1}{2^{q/2}} \prod_{\mu= 1}^{q} \Big(x_{\alpha_\mu} - \frac{1}{2\pi}\frac{\partial}{\partial x_{\alpha_\mu}}\Big) \varphi_0}_{(\star)}\otimes\, \bigwedge_{\mu= 1}^{q} \omega_{\alpha_\mu, \mu}.
	\ee

		\begin{rem}\label{rem;KMsFunctionpart}
	The operator $(\star)$ does not primarily depend on the tuple $\aalpha = (\alpha_1, \hdots, \alpha_q)$ with values in $\{1, \hdots, p\}$, but on the size of its fibers. 
			In other words, if we associate to~$\aalpha$ a tuple $\overline{\alpha} = (q_1, \hdots, q_p)$ where $q_j \coloneqq |\aalpha^{-1}(j)|$ is the multiplicity of the value~$j$ attained by~$\aalpha$, then by Lemma~\ref{lemma;KMPolynomialonevariable} we obtain 
			\be\label{eq;melpanh}
			\frac{1}{2^{q/2}} \prod_{\mu= 1}^{q} \Big(x_{\alpha_\mu} - \frac{1}{2\pi}\frac{\partial}{\partial x_{\alpha_\mu}}\Big) \varphi_0
			= \frac{\varphi_0}{(4\pi)^{q/2}} \prod_{j=1}^p H_{q_j}(\sqrt{2\pi}x_{j}),
			\ee
			where $H_n$ denotes the $n$-th Hermite polynomial 
			and $\|\overline{\alpha}\|_1 = \sum_j^p q_j = q$. 
		\end{rem}
		
		\begin{defi}\label{def;QgenandPgen}
			We define polynomials on~$\RR^{p,q}$ in the variables $\underline{x} = (x_1,\hdots, x_{p+q})$ as
			\ba\label{eq:defGpolpol}
			\Gpol_{\aalpha}\big(\underline{x}\big) 
			&\coloneqq \Gpol^{\overline{\alpha}}(\underline{x}) 
			\coloneqq \frac{1}{(4\pi)^{q/2}}\prod_{j=1}^{p} H_{q_j}\big(\sqrt{2\pi}x_{j}\big), 
			\\
			\pol_{\aalpha}\big(\underline{x}\big)
			&\coloneqq
			\pol^{\overline{\alpha}}(\underline{x})
			\coloneqq
			2^{q/2}\prod_{j=1}^p x_{j}^{q_j}.
			\ea
			We denote with the same symbols also the polynomials on~$V$ defined in the same way using the coordinates~$x_1,\dots,x_{p+q}$ with respect to the chosen basis~$(\basevec_j)_j$.
		\end{defi}
		The usage of counting vectors~$\aalphau$ is essential for investigating the injectivity of the Funke--Millson twist of the Kudla--Millson lift, as we will show in Section~\ref{sec;caseofsymmpowers}. 
		We define the linearly independent vectors~$\omega_{\aalpha} \coloneqq \bigwedge_{\mu= 1}^{q} \omega_{\alpha_\mu, \mu}$, where~$\aalpha$ is a multi-index as above, in order to lighten the notation.
		
		Summarizing, we may rewrite~$\varphi_{\text{\rm KM}}\in\big[\mathcal{S}(V)\otimes{\bigwedge}^q(\mathfrak{p}^*)\big]^K$ over the base point~$z_0\in\hermdom$ as
		\ba\label{eq;finphiKMb2}
		\varphi_{\text{\rm KM}}(\genvec,z_0)
		=\sum_{\aalpha} \Big(\Gpol_{\aalpha}\varphi_0\Big)(\genvec) \cdot \omega_{\aalpha},
		\ea
		for every $\genvec\in V$.
		This formula provides an explicit reformulation of what is written in~\cite[p.~$65$]{brfutwo}.
		A notable example is when~${q_i \in \{0,1\}}$ for all~$1 \leq i \leq p$, in which case we have~$\Gpol_{\aalpha} (v) = \pol_{\aalpha}(v)$.

	\begin{rem}\label{rem;finformphiKMglobal}
	In~\eqref{eq;finphiKMb2} we consider~$\varphi_{\text{\rm KM}}$ as a~$K$-invariant function in~${\mathcal{S}(V)\otimes{\bigwedge}^q(\mathfrak{p}^*)}$.
	To construct a global $G$-invariant function in~$\mathcal{S}(V)\otimes \mathcal{A}^q(\hermdom)$, we extend~\eqref{eq;finphiKMb2} to the whole~$\hermdom$ by means of~\eqref{eq;isoschwartzbigwedge}.
	In fact, if for every~$z\in \hermdom$ we choose an isometry~$g\in G$ such that~$g\colon z\mapsto z_0$, then we have that
	\be\label{eq;finformglobspreadphikm}
	\varphi_{\text{\rm KM}}(\genvec,z)=
	g^*\varphi_{\text{\rm KM}}\big(g(\genvec),z_0\big)
	=\sum_{\aalpha} \Big(\Gpol_{\aalpha}\varphi_0\Big)\big(g(\genvec)\big) \cdot \, g^\ast ( \omega_{\aalpha} ).
	\ee
	Since the function is $K$-invariant at the base point $z_0$ the value above does not depend on the choice of $g\colon z \mapsto z_0$. 
	\end{rem}

	\section{The Kudla--Millson theta function}\label{sec;KMthetagen1}	
		In this section we illustrate how to rewrite the Kudla--Millson theta function in terms of Siegel theta functions associated to homogeneous polynomials of degree~$q$. 
	
	As in the previous sections, we denote by~$\big(L,(\argdot {,} \argdot)\big)$ an indefinite even lattice of signature~$(p,q)$.
	We fix once and for all an integer $k=(p+q)/2$ and an orthogonal basis~$(\basevec_j)_j$ of $V=L\otimes\RR$ such that $\basevec_j^2=1$, for every $j=1,\dots,p$, and $\basevec_{p+i}^2=-1$ for $i = 1 , \hdots, q$.
	The choice of such a basis is equivalent to the choice of an isometry $g_0\colon V\to \RR^{p,q}$.
	
	\subsection{Vector-valued modular forms}\label{sec;backgroundvvmodforms}
	We provide a brief overview of elliptic modular forms, vector-valued with respect to the Weil representation associated to the lattice~$L$.
	The reader may find further details in~\cite[Section~$1$]{br;borchp}.
	\\

	Let~$\CC[L'/L]$ be the group algebra arising from the discriminant group of~$L$.
	We denote the standard basis of~$\CC[L'/L]$ by~$(\mathfrak{e_\borgamma})_{\borgamma\in L'/L}$ and write~$\langle\argdot {,} \argdot \rangle$ for the standard scalar product on~$\CC[L'/L]$ that is $\CC$-antilinear in the second component
	
	The Weil representation~$\dwrep{L}$ is a unitary representation of~$\Mp_2(\ZZ)$ on~$\CC[L'/L]$.
	It is defined on the standard generators 
	\[
	T=\left(\big(\begin{smallmatrix}
	1 & 1\\ 0 & 1
	\end{smallmatrix}\big) , 1\right)
	\qquad\text{and}\qquad
	S=\left(\big(\begin{smallmatrix}
	0 & -1 \\ 1 & 0
	\end{smallmatrix}\big),\sqrt{\tau}\right)
	\]
	of the metaplectic group as 
	\bas
	\dwrep{L}(T)\mathfrak{e}_\borgamma = e\big(q(\borgamma)\big)\mathfrak{e}_\borgamma
	\qquad
	\text{and}
	\qquad
	\dwrep{L}(S)\mathfrak{e}_\borgamma = \frac{i^{(p-q)/2}}{|L'/L|^{1/2}}
	\sum_{\delta\in L'/L}
	e\big(-(\borgamma,\delta)\big)\mathfrak{e}_\delta.
	\eas
	
	Let~$k\in\frac{1}{2}\ZZ$.
	A modular form of weight~$k$ with respect to~$\dwrep{L}$ and~$\Mp_2(\ZZ)$ is a holomorphic function~$f\colon\HH\to\CC[L'/L]$ that is holomorphic at the cusp~$\infty$ and satisfies
	\[
	f(\gamma\cdot \tau)
	=
	\phi(\tau)^{2k} \dwrep{L}(\gamma,\phi)f(\tau)\qquad\text{for every~$(\gamma,\phi)\in\Mp_2(\ZZ)$.}
	\]
	The condition that~$f$ is holomorphic at~$\infty$ means that its Fourier expansion is of the form
	\[
	f(\tau)
	=
	\sum_{\borgamma\in L'/L}
	\sum_{\substack{n\in\ZZ+q(\borgamma)\\ n\ge0}}
	c(f_\borgamma,n) e(n \tau) \mathfrak{e}_\borgamma,
	\]
	where~$f_\borgamma$ is the~$\borgamma$-component of the vector-valued function~$f$.
	
	\subsection{Locally symmetric spaces}
	Let~$\mathcal{D}$ be the symmetric domain arising from~$G=\SO(V)$.
	This is a real analytic manifold of dimension~$pq$, and may be identified with the quotient~$G/K$, for some compact maximal subgroup~$K$ of~$G$.
	We realize~$\mathcal{D}$ as the Grassmannian of negative-definite subspaces of dimension~$q$ in~$V$.
	
	Let~$\Gamma\subset\SO(L)$ be a torsion-free subgroup of finite index that acts trivially on the discriminant~$L'/L$.
	The quotient~$X=\Gamma\backslash\mathcal{D}$ inherits a structure of real analytic manifold from~$\mathcal{D}$.
	We refer to it as the \emph{locally symmetric space arising from~$L$}.
	
	The symmetric space~$\mathcal{D}$ is Hermitian if and only if~$p=2$ or~$q=2$.
	In this case, the complex structure of~$\mathcal{D}$ induces a structure of a complex manifold on the quotient space~$X$.
	The latter is a quasi-projective complex variety by the Theorem of Baily and Borel.
	It can be projective only if~$\dim X<3$.

		\subsection{Fundamentals on the Kudla--Millson theta function.}\label{sec;FundamonKMliftgen1}
	We provide here a brief introduction on the Kudla--Millson theta function.
	
	\begin{defi}
	The Kudla--Millson theta function is defined as
	\be\label{eq;thetaserKMdef}
	\Theta(\tau,z,\varphi_{\text{\rm KM}})
	= y^{-k/2} \sum_{\borgamma \in L'/L} \sum_{\lambda\in L + \borgamma}\Big(\wrep(g_\tau)\varphi_{\text{\rm KM}}\Big)(\lambda,z)
	\mathfrak{e}_\borgamma
	\ee
	for every~$\tau=x+iy\in\HH$ and~$z\in\Gr(V)$, where~$g_\tau=\big(\begin{smallmatrix}
	1 & x \\ 0 & 1\end{smallmatrix}\big)\Big(\begin{smallmatrix}
	y^{1/2} & 0\\
	0 & y^{-1/2}
	\end{smallmatrix}\Big)$ is the standard element of~$\SL_2(\RR)$ mapping~$i$ to~$\tau$, and~$\wrep$ is the Schrödinger model of the Weil representation.
	\end{defi}
	
	In the variable $\tau \in \HH$, the function~$\Theta(\tau,z,\varphi_{\text{\rm KM}})$ is a non-holomorphic modular form of weight~$k$ with respect to~$\dwrep{L}$.
	It is a closed $q$-form with respect to the variable~$z\in\Gr(V)$.
	If we denote by~$A^k_{L}$ the space of \textcolor{\myblue}{real} analytic functions from~$\HH$ to~$\CC[L'/L]$ satisfying the weight~$k$ modular transformation property with respect to the Weil representation~$\dwrep{L}$ and~$\Mp_2(\ZZ)$, we may write~${\Theta(\tau,z,\varphi_{\text{\rm KM}})\in A^k_{L}\otimes\mathcal{Z}^q(\hermdom)}$.
	
	In fact, the Kudla--Millson theta function is~$\Gamma$-invariant for every finite index subgroup~$\Gamma$ in~$\SO(L)$ preserving~$L'/L$.
	This implies that the theta function descends to an element of~${A^k_{L}\otimes\mathcal{Z}^q(X)}$ for every locally symmetric space~$X$ arising from~$L$.
	
	Kudla and Millson have shown in~\cite{kumi;intnum} that the cohomology class~$[\Theta(\tau,z,\varphi_{\text{\rm KM}})]$ is a \emph{holomorphic} modular form of weight~$k$ with values in~$H^{q}(X,\CC)$, and coincides with Kudla's generating series of special cycles~\cite[Theorem~3.1]{Kudla;speccycl}.

	Using the spread~\eqref{eq;finformglobspreadphikm} of~$\varphi_{\text{\rm KM}}$, we may rewrite the Kudla--Millson theta function as
	\ba\label{eq;KMdeg1recallexpl}
	\Theta(\tau,z,\varphi_{\text{\rm KM}})
	= \sum_{\aalpha} \underbrace{y^{-k/2} \sum_{\borgamma \in L'/L} \sum_{\lambda\in L + \borgamma} \Big(\wrep(g_\tau)(\Gpol_{\aalpha}\varphi_0)\Big)\big(g(\lambda)\big) \basee_\borgamma}_{\eqqcolon \FFa(\tau,g)}
	\otimes 
	\, g^*( \omega_{\aalpha} ),
	\ea
	where $g\in G$ is any isometry of $V=L\otimes\RR$ mapping $z$ to $z_0$, and~$\Gpol_{\aalpha}$ is the polynomial constructed in Definition~\ref{def;QgenandPgen}.
	Since the Kudla--Millson Schwartz function $\varphi_{\text{\rm KM}}$ is the spread to the whole~$\hermdom=\Gr(V)$ of an element of~$\mathcal{S}(V)\otimes{\bigwedge}^q(\mathfrak{p}^*)$ which is~$K$-invariant, the definition of~$\Theta(\tau,z,\varphi_{\text{\rm KM}})$ does not depend on the choice of~$g$ mapping~$z$ to~$z_0$.
	
	As we will see in Section~\ref{sec;follBorcherds}, it is possible to rewrite the auxiliary functions~$\FFa(\tau,g)$ arising as in~\eqref{eq;KMdeg1recallexpl} in terms of the Siegel theta functions introduced by Borcherds in~\cite{bo;grass}.

	
	\subsection{Siegel theta functions}\label{sec;Siegthetafu}
	Let~$L$ be an indefinite even lattice of general signature~$(p,q)$.
	In this section we recall how to construct Siegel theta functions attached to~$L$, as in~\cite[Section~$4$]{bo;grass}.
	This is ancillary to Section~\ref{sec;follBorcherds}, where we will rewrite the Kudla--Millson theta function associated to~$L$ in terms of Siegel theta functions. 
	
	Recall that we chose the base point~$z_0$ of~$\Gr(V)$ defined as~$z_0=g_0^{-1}(\RR^{0,q})$.
	For a given subspace $z \in \Gr(V)$ and a vector $v \in L \otimes \RR$ we denote by $v_{z}$ and $v_{z^\perp}$ 
	the orthogonal projections of $v$ to $z$ and $z^\perp$. 
	
	The Laplacian~$\Delta$ on~$\RR^{p,q}$ and its exponential are the operators defined respectively as
	\bes
	\Delta=\sum_j\frac{\partial^2}{\partial x_j^2}\qquad\text{and}\qquad\exp\Big(-\frac{\Delta}{8\pi y}\Big)=\sum_{m=0}^\infty\frac{1}{m!}\Big(-\frac{\Delta}{8\pi y}\Big)^m.
	\ees
	\begin{defi}\label{def;deg1genSiegth}
		Let~$\pol$ be a homogeneous polynomial on $\RR^{p,q}$ of degree $(m^+,m^-)$, i.e.\ homogeneous of degree $m^+$ in the first~$p$ variables, and homogeneous of degree~$m^-$ in the last~$q$ variables.
		For simplicity we write~$\pol(\genvec)$ for the value of~$\pol$ at~$g_0(\genvec)$, where~$\genvec\in V$.
		This means that we consider~$\pol$ as a polynomial in the coordinates of~$\genvec$ with respect to the chosen basis~$(\basevec_j)_j$ of~$V$.
		The Siegel theta component $\theta_{\borgamma}$ associated to~$L$, $\borgamma \in L'/L$ and~$\pol$ is defined as
		\ba
		\theta_{\borgamma}(\tau,\boralpha,\borbeta,g,\pol)
		&= y^{q/2 +m^-}\sum_{\lambda\in L + \borgamma}\exp(-\Delta /8\pi y)(\pol)\big(g(\lambda+\borbeta)\big)\\
		&\quad \times e\Big(\tau q\big((\lambda+\borbeta)_{z^\perp}\big)+\bar{\tau}q\big((\lambda+\borbeta)_z\big)-(\lambda+\borbeta/2,\boralpha)\Big),
		\ea
		for every $\tau=x+iy\in\HH$, $\boralpha,\borbeta\in L\otimes\RR$, 
		and~$g\in G$, where~$z=g^{-1}(z_0)\in\Gr(V)$.
		Further, the Siegel theta function~$\Theta_{L}$ is defined as 
		\ba\label{eq;bormegagentheta}
		\Theta_L (\tau,\boralpha,\borbeta,g,\pol) = \sum_{ \borgamma \in L'/L} \theta_{\borgamma}(\tau,\boralpha,\borbeta,g,\pol) \mathfrak{e}_{\borgamma}.
		\ea
		
		If~$\delta=\nu=0$, then we omit these in the notation and write~$\Theta_{L}(\tau,g,\pol)$.
	\end{defi}
	We remark that the factor~$y^{q/2 + m^-}$ in Definition~\ref{def;deg1genSiegth} does not appear in Borcherd's work.
	That factor
	will lighten the notation in the subsequent sections.
	\begin{rem}\label{rem;harmpol}
		If the polynomial~$\pol$ is \emph{harmonic}, i.e.~$\Delta\pol=0$, then~$\exp\big(-\Delta/8\pi y\big)(\pol)=\pol$.
		This is the case for the polynomials~$\pol_{\aalpha}$ presented in Definition~\ref{def;QgenandPgen} if~$\alpha_i\neq\alpha_j$ for all $i \neq j$.
		Otherwise, the polynomials~$\pol_{\aalpha}$ are homogeneous but non-harmonic.
	\end{rem}
	
	The following modular transformation formula is provided by~\cite[Theorem~4.1]{bo;grass}.
	\begin{thm}[Borcherds]\label{thm;borchmodtransftheta4}
		Let $L$ be a lattice of signature $(p,q)$.
		If~$\pol$ is a homogeneous polynomial of degree $(m^+,m^-)$ on~$\RR^{p,q}$, then
		\bas
		\Theta_L(\gamma\cdot\tau, a\boralpha+b\borbeta,c\boralpha+d\borbeta,g,\pol)=(c\tau+d)^{(p-q)/2+(m^+-m^-)} \rho_L(\gamma) \Theta_L(\tau,\boralpha,\borbeta,g,\pol),
		\eas
		for every $\gamma=\left(\left(\begin{smallmatrix}
			a & b\\ c & d
		\end{smallmatrix}\right) , \sqrt{c \tau + d} \right) \in \Mp_2(\ZZ)$.
	\end{thm}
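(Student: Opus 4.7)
The plan is to verify the transformation law on the two standard generators $T=\big(\big(\begin{smallmatrix}1&1\\0&1\end{smallmatrix}\big),1\big)$ and $S=\big(\big(\begin{smallmatrix}0&-1\\1&0\end{smallmatrix}\big),\sqrt{\tau}\big)$ of $\Mp_2(\ZZ)$, since the metaplectic group is generated by these and the proposed cocycle $(c\tau+d)^{(p-q)/2+(m^+-m^-)}\rho_L(\gamma)$ is multiplicative.

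First I would handle $T$, which acts by $\tau\mapsto\tau+1$ and $(\boralpha,\borbeta)\mapsto(\boralpha+\borbeta,\borbeta)$. Substituting into the exponential factor of Definition~\ref{def;deg1genSiegth}, the change in $\tau$ contributes $e(q(\lambda+\borbeta)_{z^\perp}-q((\lambda+\borbeta)_z))=e(q(\lambda+\borbeta))$ and the shift in $\boralpha$ contributes $e(-(\lambda+\borbeta/2,\borbeta))$. Writing $q(\lambda+\borbeta)-(\lambda+\borbeta/2,\borbeta)=q(\lambda)$ and recalling $q(\lambda)\in \ZZ+q(\borgamma)$ for $\lambda\in L+\borgamma$, the sum over $\borgamma\in L'/L$ is multiplied by $e(q(\borgamma))$, which is exactly the action of $\rho_L(T)$. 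The prefactor $y^{q/2+m^-}$ and the heat-operator factor are unchanged, so the $T$-identity holds with weight factor $1$.

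The main work is $S$, which sends $\tau\mapsto -1/\tau$, $(\boralpha,\borbeta)\mapsto(-\borbeta,\boralpha)$, $y\mapsto y/|\tau|^2$. The idea is to apply Poisson summation to the inner sum over the coset $L+\borgamma$ after collecting the exponential into a Gaussian on $V$. I would first rewrite
\[
e\bigl(\tau q(\mu_{z^\perp})+\bar\tau q(\mu_z)\bigr)=\exp\bigl(-\pi y(\mu,\mu)_z\bigr)\,e\bigl(x q(\mu)\bigr),
\]
for $\mu=\lambda+\borbeta$, so that the summand becomes a Schwartz function of $\lambda$, namely a polynomial times a Gaussian in the majorant metric, twisted by linear characters in $\boralpha,\borbeta$. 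Then Poisson summation on $L+\borgamma$ converts the sum into a sum over $L'$, producing a factor $1/\Vol(V/L)$ and the Fourier transform of the twisted Gaussian-polynomial evaluated at the dual variable. The standard computation of this Fourier transform is the heart of the matter: the Gaussian $\exp(-\pi y(\argdot{,}\argdot)_z+\pi ix (\argdot,\argdot))$ transforms to a Gaussian in the variable $-1/\tau$ up to the factor $(-i\bar\tau)^{q/2}(-i\tau)^{-p/2}\cdot|\det\text{(majorant)}|^{-1/2}$, giving precisely the weight $(p-q)/2$. The polynomial contribution adds $(-i\tau)^{-m^+}(-i\bar\tau)^{m^-}$ because each derivative $\partial_{x_\alpha}$ in the heat-operator expansion $\exp(-\Delta/8\pi y)(\pol)$ is dual to multiplication by $x_\alpha$, and homogeneity of degree $(m^+,m^-)$ gives the claimed extra weight $m^+-m^-$; here one uses Hecke's identity that $\exp(-\Delta/8\pi y)(\pol)$ is the polynomial obtained by ``lowering'' $\pol$, and that its product with the Gaussian has a Fourier transform of the same shape with $y\mapsto 1/y|\tau|^2$. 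Reassembling with the prefactor $y^{q/2+m^-}$ yields the claimed weight $(p-q)/2+(m^+-m^-)$.

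Finally, I would track the discriminant-group side of the Poisson sum: the resulting sum over $L'$ reindexed by cosets mod $L$ gives the character sum $|L'/L|^{-1/2}\sum_{\delta}e(-(\borgamma,\delta))\basee_{\delta}$, i.e.\ exactly $\rho_L(S)$ up to the eighth root of unity $i^{(p-q)/2}$, which combines with the $(-i\tau)$ powers to produce the honest $(c\tau+d)^{(p-q)/2+(m^+-m^-)}$. The main obstacle is bookkeeping: organizing the Fourier transform of $\exp(-\Delta/8\pi y)(\pol)\cdot e^{-\pi y(\argdot,\argdot)_z+\pi ix(\argdot,\argdot)}$ split according to the decomposition $V=z\oplus z^\perp$, so that the $(m^+,m^-)$-homogeneity is respected and the branch of the square root matches the metaplectic cocycle. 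Once this Fourier transform is established, the $S$-identity follows, and together with the $T$-identity the theorem is proved.
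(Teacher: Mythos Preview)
The paper does not supply its own proof of this statement: it is quoted verbatim as Borcherds' result and attributed to \cite[Theorem~4.1]{bo;grass} without further argument. So there is no ``paper's proof'' to compare against beyond the citation.

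Your outline is essentially Borcherds' original argument: check the identity on the generators $T$ and $S$, the $T$-case being an elementary bookkeeping of the phase $e(q(\lambda))$, and the $S$-case following from Poisson summation over $L+\borgamma$ together with the explicit Fourier transform of the Gaussian-times-polynomial summand. Your $T$-computation is correct. For $S$, the strategy is right, but the sketch is loose at the key technical point: what you call ``Hecke's identity'' is really Borcherds' Corollary~3.3 (or the surrounding lemmas) in \cite{bo;grass}, which identifies the Fourier transform of $\exp(-\Delta/8\pi)(\pol)(v)\,e^{-2\pi q_+(v)}$ with itself up to a power of $i$ depending on $(m^+,m^-)$, on each definite summand of $V=z^\perp\oplus z$ separately. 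That lemma is where the weight shift $m^+-m^-$ and the correct eighth root of unity are actually pinned down; your phrase ``lowering $\pol$'' does not capture it, and without that precise statement the matching of $i^{(p-q)/2}$ from $\rho_L(S)$ with the $(-i\tau)$, $(-i\bar\tau)$ powers cannot be verified. If you want to make the argument self-contained, state and prove that Fourier-transform lemma first; otherwise your reduction to it is fine and coincides with Borcherds' route.
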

	

	\subsection{Reduction to smaller lattices}\label{sec;redsmallerlat}
	
	The following results illustrates how to decompose the Siegel theta function attached to~$L$ with respect to certain sublattices of $L$. 
	For that purpose we need to introduce some notation. 
	Let $u$ denote a primitive norm $0$ vector of $L$, and let $u' \in L'$ satisfy $(u,u')=1$.
	Define $\brK \coloneqq (L \cap u^\perp)/\ZZ u$ and write $N$ for the smallest positive number in $(u,L)$, so that $|L'/L| = N^2 |\brK'/\brK|$. 
	Further, let $L_0'$ denote the sublattice of $L'$ defined as
	\[
	L_0' \coloneqq \{ \lambda \in L' : (\lambda,\genU) \equiv 0 \; \text{ mod } N\}.
	\]
	We denote by~$\prol : L_0' \to \brK'$ the projection constructed in \cite[(2.7)]{br;borchp}, therein called $p$. 
	This map is such that $\prol(L) = \brK$, and it induces a surjective map $L_0'/L \to \brK'/\brK$ which will be also denoted by $\prol$. 
	Further, recall that $L_0'/L = \{ \lambda \in L'/L : (\lambda,\genU) \equiv 0 \mod n\}$. 
	Note that a hyperbolic (orthogonal) split $L = \brK \oplus U$ with~$\genU$ and~$\genUU$ as standard basis for the hyperbolic plane~$U$ represents a special case in this setting that is revisited in Section~\ref{sec;injKMgenus1}. 
	
	\begin{defi}\label{defi;not&deffromborw}
		Let~$z\in \Gr(V)$, and let~$g\in G$ be such that \textcolor{\myblue}{$g z = z_0$}. 
		\textcolor{\myblue}{W}e denote by~$w$ the orthogonal complement of $\genU_z$ in~$z$, and by~$w^\perp$ the orthogonal complement of~$\genU_{z^\perp}$ in~$z^\perp$.
		We denote by~$\borw \colon V\to V$ the linear map defined as~$\borw(\genvec)=g(\genvec_{w^\perp}+\genvec_w)$.
	\end{defi}
	The linear map $\borw$ is an isometry from $w^\perp\oplus w$ to its image and vanishes on~$\RR\genU_{z^\perp}\oplus\RR\genU_z$.

	\begin{defi}\label{def;bordefimplpol}
		Let $z\in \Gr(V)$, and let $g\in G$ be such that $g$ maps $z$ to $z_0$.
		For every homogeneous polynomial $\pol$ of degree $(m^+,m^-)$ on~$\RR^{p,q}$, we define the homogeneous polynomials~$\pol_{\borw,h^+,h^-}$, of degrees respectively $(m^+-h^+,m^--h^-)$ on~${\borw(V)\cong\RR^{p-1,q-1}}$, by
		\be\label{eq;bordefimplpol}
		\pol\big(g(\genvec)\big)
		=
		\sum_{h^+,h^-}(\genvec,\genU_{z^\perp})^{h^+}\cdot(\genvec,\genU_z)^{h^-}\cdot\polw{\borw}{h^+}{h^-}\big(\borw(\genvec)\big).
		\ee
	\end{defi}

	\begin{rem}\label{ref;howtorewrourpolgtilde}
		The polynomials~$\pol_{\aalpha}$ are homogeneous of degree $(q,0)$, hence we may simplify~\eqref{eq;bordefimplpol} to
		\be\label{eq;decpolborh-0}
		\pol_{\aalpha}\big(g(\genvec)\big)
		=
		\sum_{h^+=\,0}^q(\genvec,\genU_{z^\perp})^{h^+}\cdot\polw{\aalpha,\borw}{h^+}{0}\big(\borw(\genvec)\big).
		\ee
	\end{rem}
	
	The following result provides a formula to compute~$\polw{\aalpha,\borw}{h^+}{0}$.
	In order to state it properly, 
	recall from Remark \ref{rem;KMsFunctionpart} the construction of the counting vector ${\overline{\alpha} = (q_1, \hdots, q_p) \in \ZZ_{\geq 0}^p}$ associated to $\aalpha$. 
	For $k = (k_1,\dots,k_p) \in \ZZ_{\geq0}^p$ we introduce the notation
	\[
		\binom{\overline{\alpha}}{k} \coloneqq \prod_{j = 1}^p \binom{q_j}{k_j}.
	\]
	
	\begin{lemma}\label{lemma;rewritborchimpldecourpol}
		For every~$z\in\Gr(V)$ and~${g\in G}$ such that $g$ maps $z$ to $z_0$, the polynomial~$\polw{\aalpha,\borw}{h^+}{0}$ arising from the decomposition~\eqref{eq;decpolborh-0} of~$\pol_{\aalpha}$ may be computed as
		\ba\label{eq;closefforborpor}
		\polw{\aalpha,\borw}{h^+}{0}\big(\borw(\genvec)\big)
		= \frac{2^{q/2}}{ u_{z^\perp}^{2h^+}} \sum_{\substack{k = (k_1, \hdots, k_p) \\ \textcolor{\myblue}{\|k\|_1 = h^+} }} \binom{\overline{\alpha}}{k}   \prod_{j}^p \big(g(u),\basevec_{j}\big)^{k_j} (\borw(\genvec),\basevec_{j})^{q_j-k_j}.
		\ea
	\end{lemma}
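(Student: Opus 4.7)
The plan is to prove the lemma by direct expansion starting from the defining formula of $\pol_{\aalpha}$ in Definition~\ref{def;QgenandPgen}, namely
\[
\pol_{\aalpha}\big(g(\genvec)\big) = 2^{q/2} \prod_{j=1}^{p} \big(g(\genvec),\basevec_{j}\big)^{q_j},
\]
where we have used that $\basevec_j^2=1$ for $j\le p$ and that $q_j=0$ for $j>p$ since $\aalpha$ takes values in $\{1,\dots,p\}$.

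First, I would decompose the vector $g(\genvec)$ via the orthogonal splitting $V = w^\perp \oplus \RR\genU_{z^\perp} \oplus w \oplus \RR\genU_z$ induced by $z = w\oplus\RR\genU_z$ and $z^\perp = w^\perp \oplus \RR\genU_{z^\perp}$. Using $(\genvec,\genU_{z^\perp})=(\genvec_{z^\perp},\genU_{z^\perp})$ and the analogous identity for $\genU_z$, together with Definition~\ref{defi;not&deffromborw}, one gets
\[
g(\genvec) \;=\; \borw(\genvec) \;+\; \frac{(\genvec,\genU_{z^\perp})}{\genU_{z^\perp}^{2}}\, g(\genU_{z^\perp}) \;+\; \frac{(\genvec,\genU_z)}{\genU_z^{2}}\, g(\genU_z).
\]
The crucial simplification for $j=1,\dots,p$ is that $\basevec_j\in z_0^{\perp}$ while $g(\genU_z)\in g(z)=z_0$, so $(g(\genU_z),\basevec_j)=0$; this also gives $(g(\genU_{z^\perp}),\basevec_j)=(g(\genU),\basevec_j)$. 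Hence for every $j\le p$,
\[
\big(g(\genvec),\basevec_{j}\big) \;=\; \big(\borw(\genvec),\basevec_{j}\big) \;+\; \frac{(\genvec,\genU_{z^\perp})}{\genU_{z^\perp}^{2}}\,\big(g(\genU),\basevec_{j}\big).
\]

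Next, I would raise each factor to the power $q_j$ and expand using the binomial theorem, introducing a summation variable $k_j\in\{0,\dots,q_j\}$ in each factor. Interchanging product and sum yields, with $k=(k_1,\dots,k_p)$ and $\|k\|_1=\sum_j k_j$,
\[
\pol_{\aalpha}\big(g(\genvec)\big) \;=\; 2^{q/2}\sum_{k}\binom{\overline{\alpha}}{k}\,\frac{(\genvec,\genU_{z^\perp})^{\|k\|_1}}{\genU_{z^\perp}^{2\|k\|_1}}\,\prod_{j=1}^{p}\big(g(\genU),\basevec_{j}\big)^{k_j}\big(\borw(\genvec),\basevec_{j}\big)^{q_j-k_j}.
\]
Grouping the terms by the common value $h^+ := \|k\|_1$ pulls out a factor $(\genvec,\genU_{z^\perp})^{h^+}/\genU_{z^\perp}^{2h^+}$ from each summand of fixed $h^+$.

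Finally, the remaining sum over $k$ with $\|k\|_1=h^+$ is a polynomial in the components $(\borw(\genvec),\basevec_j)$, i.e.\ a polynomial in $\borw(\genvec)$, while $(\genvec,\genU_{z^\perp})$ is an independent linear form in $\genvec$. By comparing the resulting expression with the decomposition~\eqref{eq;decpolborh-0} of Remark~\ref{ref;howtorewrourpolgtilde}, which is unique since the $(\genvec,\genU_{z^\perp})^{h^+}$ are linearly independent in the variables orthogonal to $w^\perp\oplus w$, one reads off exactly the claimed formula~\eqref{eq;closefforborpor}. There is no real obstacle; the only point requiring care is keeping track of the four-term orthogonal splitting and of the vanishing $(g(\genU_z),\basevec_j)=0$ for $j\le p$, which is what collapses the expansion to a univariate binomial in $(\genvec,\genU_{z^\perp})$.
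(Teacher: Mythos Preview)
Your proof is correct and follows essentially the same approach as the paper: expand $\pol_{\aalpha}\big(g(\genvec)\big)=2^{q/2}\prod_j(g(\genvec),\basevec_j)^{q_j}$, isolate the $\genU_{z^\perp}$-contribution, apply the binomial theorem, and group by $h^+=\|k\|_1$. The only cosmetic difference is that the paper decomposes the test vectors $g^{-1}(\basevec_j)=s_j\genU_{z^\perp}+v_j'$ in $z^\perp$ and then identifies $s_j=(g(\genU),\basevec_j)/\genU_{z^\perp}^2$ and $(\genvec,v_j')=(\borw(\genvec),\basevec_j)$, whereas you decompose the argument $g(\genvec)$ itself; these are dual viewpoints yielding the identical expansion.
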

	\begin{proof}
		For every $\genvec\in L\otimes\RR$, we denote by $x_j$ the coordinate of $\genvec$ with respect to the standard basis $\basevec_1,\dots,\basevec_{p+q}$ of $L\otimes\RR$.
		We recall that
		\bes
		\pol_{\aalpha}(\genvec) = 2^{q/2}\prod_{\mu=1}^q x_{\alpha_\mu} = 2^{q/2} \prod_{j=1}^p (\genvec,\basevec_{j})^{q_j}.
		\ees
		If $g\in G=\SO(L\otimes\RR)$, then $\pol_{\aalpha}\big( g(\genvec)\big)=2^{q/2} \prod_{j = 1}^p \big(\genvec,g^{-1}(\basevec_{j})\big)^{q_j}$.
		To rewrite the latter polynomial as in~\eqref{eq;decpolborh-0}, we rewrite $\big(\genvec,g^{-1}(\basevec_{j})\big)$ in terms of $(\genvec,\genU_{z^\perp})$.
		
	Since~$z=g^{-1}(z_0)$, the vectors~$g^{-1}(\basevec_{j})$ lie in~$z^\perp$.
		Recall that $w$ (resp.\ $w^\perp$) is the orthogonal complement of $u_z$ (resp.\ $u_{z^\perp}$) in $z$ (resp.\ $z^\perp$).
		We may decompose
		\be\label{eq;replprimstdbas}
		g^{-1}(\basevec_{j})=s_{j} u_{z^\perp} + v_{j}',\qquad\text{for $1 \leq j \leq p$,}
		\ee
		for some $s_{j}\in\RR$, where $v_{j}'$ is the orthogonal projection of $g^{-1}(\basevec_{j})$ to $w^\perp$.
		
		We use~\eqref{eq;replprimstdbas} to rewrite~$\pol_{\aalpha}\big( g(\genvec)\big)$ as
		\ba\label{eq;decompolabourcase}
		\pol_{\aalpha}\big( g(\genvec)\big)	
		&=	2^{q/2} \prod_{j=1}^p \big( v , s_{j} u_{z^\perp} + v'_{j} \big)^{q_j} \\
		&=	2^{q/2} \prod_{j=1}^p \sum_{k_j=0}^{q_j} \binom{q_j}{k_j} s_{j}^{k_j} (v,u_{z^\perp})^{k_j} (v,v'_{j})^{q_j-k_j} \\
		&=	2^{q/2} \sum_{h^+ =\, 0}^{q} (v,u_{z^\perp})^{h^+} 
		\sum_{\substack{(k_1,\hdots, k_p) \in \ZZ_{\geq0}^p \\ \textcolor{\myblue}{\|k\|_1 = h^+}}} \binom{(q_1,\dots,q_p)}{(k_1,\dots,k_p)}   \prod_{j}^p s_{j}^{k_j} (v,v'_{j})^{q_j-k_j}. \\
		\ea
		Here we have grouped the terms with respect to the sum of the values of $k_j$. 
		Comparing~\eqref{eq;decompolabourcase} with~\eqref{eq;decpolborh-0}, we deduce that
		\bes
		\polw{\aalpha,\borw}{h^+}{0}\big(\borw(\genvec)\big)= 2^{q/2} 
		\sum_{\substack{k \in \ZZ_{\geq0}^p \\ \textcolor{\myblue}{\|k\|_1 = h^+}}} \binom{\overline{\alpha}}{k}   \prod_{j}^p s_{j}^{k_j} (v,v'_{j})^{q_j-k_j}.
		\ees
		Since $u_{z^\perp}$ is orthogonal to $w^\perp$ by construction, it follows that
		\bes
		s_{j}=\frac{\big(\genU_{z^\perp},g^{-1}(\basevec_{j})\big)}{\genU_{z^\perp}^2}=
		\frac{\big(g(\genU),\basevec_{j}\big)}{\genU_{z^\perp}^2}.
		\ees
		Moreover, since~$\basevec_j$ is orthogonal to~$g(\genvec_w)$ for every~$j\le p$, we may rewrite
		\bes
		(\genvec,\genvec_j')=\big(\genvec_{w^\perp},g^{-1}(\basevec_j)\big)=\big(\borw(\genvec),\basevec_j\big).\qedhere
		\ees
	\end{proof}

	Borcherds expresses the theta function $\theta_{\borgamma}$ in terms of Siegel theta functions associated to~$K$; see~\cite[Theorem~5.2]{bo;grass}.
	Kiefer rewrote this result to mimic a Poincaré series which is advantageous for applying unfolding; see {\cite[Thm 6.4]{Kiefer2022}}. 
	The following is Kiefer's result rewritten with respect to our setting.

	\begin{thm}[Borcherds]\label{thm;fromborchspltheta} 
		Let $L$ be an even lattice of signature~$(p,q)$, let~$\mu \in L \otimes \RR$ be the vector defined as
		\bes
		\mu=-\genUU+\genU_{z^\perp}/2\genU_{z^\perp}^2+\genU_z/2\genU_z^2, 
		\ees
		and write $\lvert\genU_{z^\perp}\rvert \coloneqq \sqrt{\genU_{z^\perp}^2}$, as well as $\overline{\Gamma}_\infty$ for the stabilizer of $\infty$ in $\Mp_2(\ZZ)$. 
		Then 
		\bas
		\,&\Theta_{L}(\tau, g,\pol_{\aalpha})
		\\
		&\quad=\frac{1}{\sqrt{2} \lvert\genU_{z^\perp}\rvert} \Theta_{\borK}(\tau,\borw,\polw{\aalpha,\borw}{0}{0}) \sum_{l \in \ZZ/N\ZZ} \mathfrak{e}_{lu/N} \\ 
		&\qquad+ \frac{1}{\sqrt{2}\lvert\genU_{z^\perp}\rvert} \sum_{(\gamma, \phi) \in \overline{\Gamma}_\infty\backslash \Mp_2(\ZZ)} \sum_{h^+=0}^q (-2i)^{-h^+} 
		\sum_{r = 1}^{\infty} r^{h^+} \frac{\phi(\tau)^{-(p+q)}}{\Im(\gamma \tau)^{h^+}}
		\\
		&\qquad\times
		\exp \bigg( - \frac{\pi r^2}{2 \Im(\gamma \tau)\genU_{z^\perp}^2 }\bigg)
		\rho_L(\gamma)^{-1} \bigg[\Theta_{\brK}\big(\gamma \tau,r\mu,0,\borw,\polw{\aalpha,\borw}{h^+}{0}\big) \sum_{l \in \ZZ/N\ZZ} \mathfrak{e}_{\tfrac{lu}{N}} \bigg(-\frac{lr}{N}\bigg) \bigg].
		\eas
		
	\end{thm}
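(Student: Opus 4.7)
The plan is to specialize the Poincaré-series form of Borcherds' splitting theorem, due to Kiefer, to the homogeneous polynomial $\pol_{\aalpha}$. The first step is to invoke \cite[Theorem~5.2]{bo;grass}, which splits $\Theta_L(\tau,g,\pol)$ across the primitive isotropic vector $\genU$ into a constant-in-$r$ piece (the $r=0$ part, after Poisson summation in the $\genU$-direction) plus a series indexed by nonzero integers $r$ and by cosets in $L_0'/L$, with summands featuring Siegel theta functions of the sublattice $\brK$ together with Gaussian factors in the $\genU_{z^\perp}$-direction. Kiefer's reorganization \cite[Thm 6.4]{Kiefer2022} then repackages the resulting double sum (over $r$ and over the $(c,d)$ governing the Poisson kernel) as a Poincaré-style sum over $\overline{\Gamma}_\infty\backslash\Mp_2(\ZZ)$, absorbing the automorphy factors $\phi(\tau)^{-(p+q)}$ and $\rho_L(\gamma)^{-1}$ into the summand and transporting $y$ to $\Im(\gamma\tau)$ in the Gaussian.

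The key substitution is the polynomial decomposition for $\pol_{\aalpha}$. Since $\pol_{\aalpha}$ is homogeneous of degree $(q,0)$, only terms with $h^-=0$ survive in Definition~\ref{def;bordefimplpol}, and Remark~\ref{ref;howtorewrourpolgtilde} gives
\[
\pol_{\aalpha}\big(g(\genvec)\big)=\sum_{h^+=0}^{q}(\genvec,\genU_{z^\perp})^{h^+}\,\polw{\aalpha,\borw}{h^+}{0}\big(\borw(\genvec)\big).
\]
Under the Poisson summation each factor $(\genvec,\genU_{z^\perp})^{h^+}$ trades for $r^{h^+}/(-2i)^{h^+}$ together with the weight correction $\Im(\gamma\tau)^{-h^+}$, precisely yielding the prefactor $r^{h^+}(-2i)^{-h^+}\phi(\tau)^{-(p+q)}\Im(\gamma\tau)^{-h^+}$ in the statement, while the residual polynomial $\polw{\aalpha,\borw}{h^+}{0}$ on $w^\perp\oplus w\cong\RR^{p-1,q-1}$ decorates the theta function of $\brK$. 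The $r=0$ piece forces $h^+=0$ and thus produces the isolated leading term involving $\Theta_{\brK}(\tau,\borw,\polw{\aalpha,\borw}{0}{0})$. Commutation with $\exp(-\Delta/8\pi y)$ is unproblematic since $\genU_{z^\perp}$ is orthogonal to $\borw(V)$, so the Laplacian decouples between the two variable groups and the cross derivatives vanish.

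The main obstacle is the bookkeeping of the discriminant characters $\mathfrak{e}_{l\genU/N}$. The surjection $\prol\colon L_0'/L\twoheadrightarrow\brK'/\brK$ has kernel generated by the class of $\genU/N$, of order $N$, and the characters along this kernel twisted by the phases $e(-lr/N)$ that emerge from the Poisson kernel assemble into the factor $\sum_{l\in\ZZ/N\ZZ}\mathfrak{e}_{l\genU/N}(-lr/N)$ on the right-hand side. Verifying compatibility of these characters with the Weil representation cocycle $\rho_L(\gamma)^{-1}$ under the Poincaré unfolding is the subtlest piece, but it is already handled in Kiefer's work, and combined with the polynomial decomposition above it yields the claimed formula.
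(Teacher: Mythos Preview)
Your proposal is correct and matches the paper's approach exactly: the paper does not prove this theorem at all but simply cites \cite[Theorem~5.2]{bo;grass} and its Poincaré-series reformulation \cite[Thm~6.4]{Kiefer2022}, stating that the displayed formula is ``Kiefer's result rewritten with respect to our setting.'' Your sketch of how the specialization to $\pol_{\aalpha}$ works (using that it is homogeneous of degree $(q,0)$ so only $h^-=0$ survives, via Remark~\ref{ref;howtorewrourpolgtilde}) is precisely the adaptation the paper has in mind but does not spell out.
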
	
	
	When we use $\Theta_{\brK}$ in Theorem~\ref{thm;fromborchspltheta}, we should write as argument the orthogonal projection of~$\mu$ to~$\brK\otimes\RR$ instead of~$\mu$.
	However, we use here the same abuse of notation as in~\cite{bo;grass}.

	
	\subsection{The Kudla--Millson theta function in terms of Siegel theta functions.}\label{sec;follBorcherds}
	In this section, we rewrite the Kudla--Millson theta function~$\Theta(\tau,z,\varphi_{\text{\rm KM}})$ in terms of Siegel theta functions.
	We then recall how to rewrite the latter with respect to sublattices of signature $(p-1,q-1)$.

	\begin{lemma}\label{lemma;auxforFaalp}
		Let $y \in \RR_{>0}$. Then for every $ v \in V$ we have
		\ba\label{eq;lemma;auxforFaalp}
		y^{-q/2}\Gpol_{\aalpha}\big(\sqrt{y}\cdot v\big)
		=
		\exp\big(-\Delta/8\pi y\big)(\pol_{\aalpha})(v).
		\ea
	\end{lemma}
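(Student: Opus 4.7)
The plan is to reduce the statement to a one-variable computation via separation of variables. Both sides factor over the coordinates: by Definition~\ref{def;QgenandPgen} we have $\pol_{\aalpha}(\underline{x}) = 2^{q/2}\prod_{j=1}^p x_j^{q_j}$, while $\Gpol_{\aalpha}(\underline{x}) = (4\pi)^{-q/2}\prod_j H_{q_j}(\sqrt{2\pi}\,x_j)$, and the Laplacian decomposes as $\Delta = \sum_j \partial_{x_j}^2$ with commuting summands. Hence the operator $\exp(-\Delta/8\pi y)$ acts as a product of one-variable operators $\exp\bigl(-\frac{1}{8\pi y}\partial_{x_j}^2\bigr)$, each of which touches only the corresponding factor $x_j^{q_j}$ of $\pol_{\aalpha}$.

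The key lemma to establish is the one-variable identity
\[
\exp\!\Big(\!-a\frac{d^2}{dx^2}\Big) x^n \;=\; a^{n/2} H_n\!\Big(\frac{x}{\sqrt{4a}}\Big),\qquad a>0,\;n\in\ZZ_{\geq0}.
\]
To prove this, I expand the exponential as a formal series and compute termwise:
\[
\exp\!\Big(\!-a\frac{d^2}{dx^2}\Big) x^n
= \sum_{l=0}^{\lfloor n/2\rfloor} \frac{(-a)^l}{l!}\frac{n!}{(n-2l)!}\,x^{n-2l},
\]
which terminates since $x^n$ is a polynomial. Comparing this with the explicit formula~\eqref{eq;Hermitepolynomialexplicit} for $H_n$ and substituting $x \mapsto x/\sqrt{4a}$ yields precisely $a^{n/2} H_n\bigl(x/\sqrt{4a}\bigr)$ after collecting the factors of $2$ and $a$.

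Applying this identity with $a = 1/(8\pi y)$, so that $\sqrt{4a} = 1/\sqrt{2\pi y}$ and $a^{n/2} = (8\pi y)^{-n/2}$, and multiplying over $j=1,\dots,p$ with exponents $q_j$ summing to $q$, I obtain
\[
\exp(-\Delta/8\pi y)(\pol_{\aalpha})(\genvec)
= \frac{2^{q/2}}{(8\pi y)^{q/2}}\prod_{j=1}^p H_{q_j}\bigl(\sqrt{2\pi y}\,x_j\bigr)
= \frac{1}{(4\pi y)^{q/2}}\prod_{j=1}^p H_{q_j}\bigl(\sqrt{2\pi y}\,x_j\bigr).
\]
On the other hand, the definition of $\Gpol_{\aalpha}$ immediately gives
\[
y^{-q/2}\Gpol_{\aalpha}\bigl(\sqrt{y}\,\genvec\bigr)
= \frac{1}{(4\pi y)^{q/2}}\prod_{j=1}^p H_{q_j}\bigl(\sqrt{2\pi y}\,x_j\bigr),
\]
which matches, completing the proof. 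The only potential obstacle is bookkeeping of the various constants $2^{q/2}$, $(4\pi)^{q/2}$, and $(8\pi y)^{-q/2}$, but these balance exactly because $\sum_j q_j = q$.
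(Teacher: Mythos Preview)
Your proof is correct and follows essentially the same approach as the paper: both reduce to a one-variable computation via the multiplicativity of $\exp(-\Delta/8\pi y)$ on products of functions in separate variables, then match the explicit expansion of the Hermite polynomial~\eqref{eq;Hermitepolynomialexplicit} against the termwise action of the exponential of the second derivative on a monomial. The only difference is presentational: you isolate the clean identity $\exp(-a\,d^2/dx^2)\,x^n = a^{n/2} H_n(x/\sqrt{4a})$ first and then specialize, whereas the paper reduces to $p=1$ and verifies the two sides directly for the specific value $a=1/(8\pi y)$.
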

	\begin{proof}
		Recall the polynomials~$\Gpol_{\aalpha}$ and~$\pol_{\aalpha}$ from Definition~\ref{def;QgenandPgen}, and that we write~$\genvec$ over the chosen basis of~$V$ as~$v= \sum_{i} x_i \basevec$.
		We rewrite the left-hand side of~\eqref{eq;lemma;auxforFaalp} as
		\bas
		y^{-q/2}\Gpol_{\aalpha}\big(\sqrt{y}\cdot v\big)=
		\prod_{j=1}^p \left(\frac{1}{4\pi y}\right)^{q_j/2} H_{q_j}\big(\sqrt{2\pi y} \cdot x_{j}\big),
		\eas
		and the polynomial~$\pol_{\aalpha}$ as
		\bes
		\pol_{\aalpha}(v) = \prod_{j=1}^p 2^{q_j/2}x_{j}^{q_j}.
		\ees
		
		If~$f_1$ and~$f_2$ are smooth functions defined on~$\RR^{p,q}$ such that the variables~$x_j$ defining~$f_1$ are pairwise different with respect to the ones used to define~$f_2$, then~$\Delta (f_1f_2)=\Delta(f_1)f_2 + f_1\Delta(f_2)$.
		This implies that~$\exp(\Delta)(f_1f_2)=\exp(\Delta)(f_1)\cdot \exp(\Delta)(f_2)$.
		We may then split both sides of~\eqref{eq;lemma;auxforFaalp} in factors which depend on pairwise different variables as
		\be\label{eq;proofofmagicsimpl}
		\prod_{j=1}^p \left(\frac{1}{4\pi y}\right)^{q_j/2} H_{q_j}(\sqrt{2\pi y} x_{j})
		=
		\prod_{j=1}^p \exp\big(-\Delta/8\pi y\big)\big(2^{q_j/2}x_{j}^{q_j}\big).
		\ee
		We prove~\eqref{eq;lemma;auxforFaalp} by showing that the~$j$-th factor on the left-hand side of~\eqref{eq;proofofmagicsimpl} equals the~$j$-th factor on the right-hand side of~\eqref{eq;proofofmagicsimpl}.
		We may then assume without loss of generality that~$p=1$ and~$q_1=q$.
		From now on, we omit the indices~$j$ from the notation, assuming that the multi-index~$\aalpha$ has~$q$ entries equal to~$1$.
		
		By evaluating~\eqref{eq;Hermitepolynomialexplicit} at $\sqrt{2 \pi} x$ we have
		\ba\label{eq;thingstomatchmagsympl}
		y^{-q/2}\Gpol_{\aalpha}\big(\sqrt{y} \cdot v\big)
		&=
		q!\sum_{n=0}^{\lfloor q/2\rfloor}\frac{(-1)^n 2^{q/2 -3n}}{n! (q-2n)! \pi^n y^n}x^{q-2n}.
		\ea
		We want to show that this equals~$\exp\big(-\Delta/8\pi y\big)(\pol_{\aalpha})\big( g(v)\big)$.
		Since the polynomial~$\pol_{\aalpha}$ is now in only one variable~$x$, we may replace the Laplacian~$\Delta$ with~$\partial^2/\partial x^2$.
		We deduce that
		\ba\label{eq;finalmatchingmagsympl}
		\exp\big(-\Delta/8\pi y\big)(\pol_{\aalpha})(v)
		&=
		\sum_{n=0}^\infty \frac{(-1)^n}{n! (8\pi y)^n}\bigg(\frac{\partial^2}{\partial x^2}\bigg)^n (2^{q/2} x^q)
		\\
		&=
		q!\sum_{n=0}^{\lfloor q/2 \rfloor}\frac{(-1)^n 2^{q/2 - 3n}}{n! (q-2n)!\pi^n y^n} x^{q-2n},
		\ea
		where in the last passage we used that
		\bes
		\bigg(\frac{\partial^2}{\partial x^2}\bigg)^n (x^q)
		=
		\begin{cases}
			\frac{q!}{(q-2n)!}x^{q-2n}, & \text{if~$q\ge n$},\\
			0, & \text{otherwise}. 
		\end{cases}
		\qedhere
		\ees
	\end{proof}
	Recall that we denote by~$\FFa$ the auxiliary functions appearing in the rewriting~\eqref{eq;KMdeg1recallexpl} of the Kudla--Millson theta function.
	\begin{prop}\label{lemma;FabwithBorch}
		For every multi-index~$\aalpha$, we may rewrite the auxiliary function~$\FFa$ in terms of Siegel theta functions as
		\be\label{eq;FabwithBorch}
		\FFa(\tau,g)=
		\Theta_L(\tau,g,\pol_{\aalpha}),
		\ee
		for every~$\tau=x+iy\in\HH$ and~$g\in G$, where $\pol_{\aalpha}$ is the polynomial as in \eqref{eq:defGpolpol}. 
	\end{prop}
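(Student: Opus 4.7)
The plan is to prove the identity by a direct computation comparing both sides term by term after unpacking the definitions. Since both $\FFa(\tau,g)$ and $\Theta_L(\tau,g,\pol_{\aalpha})$ are sums over $\borgamma \in L'/L$ and $\lambda \in L + \borgamma$ indexed by the standard basis of $\CC[L'/L]$, it suffices to show that for each such $\lambda$ the corresponding summands coincide, namely
\[
y^{-k/2}\big(\wrep(g_\tau)(\Gpol_{\aalpha}\varphi_0)\big)\big(g(\lambda)\big)
=
y^{q/2}\exp(-\Delta/8\pi y)(\pol_{\aalpha})\big(g(\lambda)\big)\cdot e\bigl(\tau q(\lambda_{z^\perp})+\bar\tau q(\lambda_z)\bigr).
\]

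First I would evaluate the left-hand side using the formulas in Definition~\ref{defi;schrmod}. Writing $g_\tau = \bigl(\begin{smallmatrix}1 & x\\ 0 & 1\end{smallmatrix}\bigr)\bigl(\begin{smallmatrix}y^{1/2} & 0\\ 0 & y^{-1/2}\end{smallmatrix}\bigr)$ and applying the translation and dilation actions in~\eqref{eq;actMp2Schrod} (with $a^{(p+q)/2}=y^{k/2}$) gives
\[
\wrep(g_\tau)(\Gpol_{\aalpha}\varphi_0)(v)
= y^{k/2}\,e(x q(v))\,\Gpol_{\aalpha}(\sqrt{y}\,v)\,\varphi_0(\sqrt{y}\,v),
\]
and evaluating at $v = g(\lambda)$ is legitimate because $g$ is an isometry, so $q(g(\lambda))=q(\lambda)$. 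Next I would exploit that $\varphi_0(g(\lambda)) = e^{-\pi(g(\lambda),g(\lambda))_{z_0}} = e^{-\pi(\lambda,\lambda)_z}$ since $g$ maps $z$ to $z_0$ and conjugates the standard majorant accordingly.

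The key rearrangement is then the decomposition $(\lambda,\lambda)_z = 2q(\lambda_{z^\perp}) - 2q(\lambda_z)$ coming from~\eqref{eq;standardmajorant}, combined with $q(\lambda) = q(\lambda_z) + q(\lambda_{z^\perp})$, which matches the exponential factor:
\[
e(x q(\lambda))\,e^{-\pi y(\lambda,\lambda)_z}
= e(x q(\lambda_{z^\perp}))e^{-2\pi y q(\lambda_{z^\perp})} \cdot e(x q(\lambda_z))e^{2\pi y q(\lambda_z)}
= e\bigl(\tau q(\lambda_{z^\perp}) + \bar\tau q(\lambda_z)\bigr),
\]
where $\tau = x+iy$. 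This is the exponential appearing in the Siegel theta component $\theta_{\borgamma}$ of Definition~\ref{def;deg1genSiegth} with $\boralpha = \borbeta = 0$.

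What remains is to identify the polynomial factor. After cancelling $y^{k/2}$ with the prefactor $y^{-k/2}$ in $\FFa$, one is left with $\Gpol_{\aalpha}(\sqrt{y}\,g(\lambda))$, which by Lemma~\ref{lemma;auxforFaalp} equals $y^{q/2}\exp(-\Delta/8\pi y)(\pol_{\aalpha})(g(\lambda))$. This is precisely the polynomial factor in $\theta_\borgamma$ (recall $\pol_{\aalpha}$ has degree $(q,0)$, so $m^- = 0$ and the $y$-power is $y^{q/2}$). Assembling the three pieces and summing over $\lambda$ and $\borgamma$ yields~\eqref{eq;FabwithBorch}. The computation is essentially routine bookkeeping; the only place requiring genuine input is the polynomial identity of Lemma~\ref{lemma;auxforFaalp}, which has already been established, so I do not expect any serious obstacle.
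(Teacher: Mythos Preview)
Your proposal is correct and follows essentially the same route as the paper's proof: compute $\wrep(g_\tau)$ on $\Gpol_{\aalpha}\varphi_0$ via the translation and dilation formulas, identify the Gaussian factor with $e^{-\pi y(\lambda,\lambda)_z}$ to recombine the exponentials into $e(\tau q(\lambda_{z^\perp})+\bar\tau q(\lambda_z))$, and then invoke Lemma~\ref{lemma;auxforFaalp} for the polynomial part. The only slip is notational---you write $\varphi_0(g(\lambda))$ where you mean $\varphi_0(\sqrt{y}\,g(\lambda))$---but you use the correct expression $e^{-\pi y(\lambda,\lambda)_z}$ immediately afterward, so the argument is sound.
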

	\begin{proof}
		Let~$k=(p+q)/2$, and let $g_\tau=\big(\begin{smallmatrix} 1 & x\\ 0 & 1 \end{smallmatrix}\big)\left(\begin{smallmatrix} \sqrt{y} & 0\\ 0 & \sqrt{y}^{-1}\end{smallmatrix}\right)$ be the standard element of~$\SL_2(\RR)$ mapping~$i$ to~$\tau=x+iy$.
		Recall the Schrödinger model~$\wrep$ from Definition~\ref{defi;schrmod}. We may compute
		\ba\label{eq;inizFabwithBorch}
		\wrep(g_\tau)\big(\Gpol_{\aalpha}\varphi_0\big)\big( g(\genvec)\big)
		&=
		y^{k/2}\cdot\wrep\left(\begin{smallmatrix} 1 & x\\ 0 & 1\end{smallmatrix}\right)\big(\Gpol_{\aalpha} \varphi_0\big)\big( g(y^{1/2}\genvec)\big)\\
		&=
		y^{k/2}\cdot e\big(xq(\genvec)\big)
		\cdot\Gpol\big(\sqrt{y} \cdot  g(\genvec)\big)
		\cdot \varphi_0\big( g(y^{1/2}\genvec)\big).
		\ea
		Since~$\varphi_0\big( g(y^{1/2}\genvec)\big)=e^{-\pi y(\genvec,\genvec)_z}$, we may deduce that
		\bas
		e\big(\tau q(\genvec_{z^\perp})+\bar{\tau}q(\genvec_z)\big)=e\big(x q(\genvec)\big)\cdot e^{-\pi y(\genvec,\genvec)_z}=e\big(x q(\genvec)\big)\cdot\varphi_0\big( g(y^{1/2}\genvec)\big),
		\eas
		for every $\tau\in\HH$.
		This, together with~\eqref{eq;inizFabwithBorch}, implies that
		\bes
		\wrep(g_\tau)\big(\Gpol_{\aalpha}\varphi_0\big)\big( g(\genvec)\big)=
		y^{k/2}\cdot
		\Gpol_{\aalpha}\big(\sqrt{y} \cdot  g(\genvec)\big)\cdot
		e\big(\tau q(\genvec_{z^\perp})+\bar{\tau}q(\genvec_z)\big).
		\ees
		We may then rewrite the auxiliary function~$\FFa$ as
		\bes
		\FFa(\tau,g)
		=
		\sum_{\borgamma \in L'/L} \sum_{\lambda \in L + \borgamma} \Gpol_{\aalpha}\big(\sqrt{y} \cdot  g(\lambda)\big)\cdot
		e\big(\tau q(\lambda_{z^\perp})+\bar{\tau}q(\lambda_z)\big) \mathfrak{e}_{\borgamma},
		\ees
		from which we deduce it equals~$\Theta_L(\tau,g,\pol_{\aalpha})$ by Lemma~\ref{lemma;auxforFaalp}.
	\end{proof}

	\section{The unfolding of the Kudla--Millson lift}\label{sec;theunfofKMgen1}
	In this section we unfold 
	the genus~$1$ Kudla--Millson theta lift~${\KMliftbase\colon S^k_L\to\mathcal{Z}^q(X)}$, defined as
	\[
	\KMliftbase(f)
	\coloneqq
	\int_{\SL_2(\ZZ)\backslash\HH}y^k \langle f(\tau) , \Theta(\tau,z,\varphi_{\text{\rm KM}}) \rangle \frac{dx\,dy}{y^2}.
	\]
	We recall that it produces~$\Gamma$-invariant closed~$q$-forms on~$\hermdom$, which descend to closed~$q$-forms on the locally symmetric space~$X=\Gamma\backslash\hermdom$.
	
	By means of~\eqref{eq;KMdeg1recallexpl} and Proposition \ref{lemma;FabwithBorch}, we may rewrite~$\KMliftbase$ more explicitly as
	\ba\label{eq;KMlambdaaO}
	\KMliftbase(f)
	=&\sum_{\aalpha}\Big(
	\int_{\SL_2(\ZZ)\backslash\HH} y^{k} \langle f(\tau), \Theta_L(z,g,\pol_{\aalpha}) \rangle \frac{dx\,dy}{y^2}
	\Big) \cdot g^*( \omega_{\aalpha} ),
	\ea
	for every cusp form~$f\in S^k_L$, and for every~$g\in G$ mapping~$z$ to~$z_0$.
	The value of~$\KMliftbase(f)$ on~$z$ does not depend on the choice of such a~$g$.
	We refer to the integrals appearing as coefficients in~\eqref{eq;KMlambdaaO}, namely
	\be\label{eq;intwwtu}
	\intfunct(g) \coloneqq \int_{\SL_2(\ZZ)\backslash\HH}y^{k} \big\langle f(\tau), \Theta_L(z,g,\pol_{\aalpha}) \big\rangle \frac{dx\,dy}{y^2},
	\ee
	as the \emph{defining integrals} of~$\KMliftbase(f)$.
	We will compute the Fourier expansion of such integrals using the \emph{unfolding trick}.
	
	\subsection{Fourier expansions under invariance by Eichler transformations}\label{sec;FexpLlorinvfuncts}
	In this section we recall Fourier expansions of complex-valued functions on~$G=\SO(V)$ invariant with respect to certain Eichler transformations.
	The Hermitian case in signature~$(p,2)$ has already been treated in~\cite[Section~$4$]{zufunfolding} by means of an identification of~$G$ with a product of a compact maximal subgroup of~$G$ and the tube domain model of~$\Gr(V)$.
	In this section we describe Fourier expansions with respect to Eichler transformations induced by a sublattice of~$L$.
	This point of view allows us to avoid to choose any identification.
	
	Since the results of this sections are well-known to experts, we leave most of the details and proofs to the reader.\\
	
	Let~$L$ be an even indefinite lattice of signature~$(p,q)$.
	Recall the construction of the elements~$\genU,\genUU\in L'$ and the sublattice~$M\subset L$ from Section~\ref{sec;redsmallerlat}.
	For every~$g\in G$, we denote by~$z\in\Gr(V)$ the negative definite plane that maps to the base point~$z_0$ under~$g$.
	We consider
	\[
	\mu = -\genUU + \genU/2\genU^2_{z^\perp} + \genU_z/2\genU^2_z,
	\]
	as a function of either~$z\in\Gr(V)$ or~$g\in G$, depending on the context.
	
	If~$q=2$, translations on the tube domain model of~$\Gr(V)$ are provided by certain isometries in~$G$ called \emph{Eichler transformations}.
	We recall them in \emph{general signature} as follows.
	\begin{defi}
	Let~$\lambda\in\brK\otimes\RR$.
	The Eichler transformation~$E(\genU,\lambda)\in G$ associated to the isotropic vector~$\genU\in U$ is defined as
	\bes
	E(\genU,\lambda)(\genvec)=\genvec - (\genvec,\genU)\lambda + (\genvec,\lambda)\genU - q(\lambda)(\genvec,\genU)\genU,\qquad\text{for every~$\genvec\in L\otimes\RR$.}
	\ees
	\end{defi}
	The following technical result is preparatory to the description of Fourier expansions.
	We recall that we denote by~$w$, resp.~$w^\perp$, the orthogonal complement of~$\genU_z$, resp.~$\genU_{z^\perp}$, in~$z$, resp.~$z^\perp$. Furthermore, if~$g\in G=\SO(V)$, then~$\borw$ is the linear map defined as
	\[
	\borw(\genvec)=g(\genvec_{w^\perp}+\genvec_w),\qquad\text{for every~$\genvec\in L\otimes\RR$.}
	\]
	\begin{prop}\label{prop;auxforFexpnohermitdom}
	Let~$\lambda\in\brK\otimes\RR$, and let~$z\in\Gr(V)$.
	We denote by~$g$ an isometry in~$G$ mapping~$z$ to the base point~$z_0$, and by~$\tilde{z}\in\Gr(V)$ the negative definite \textcolor{\myblue}{space} that maps to~$z$ under~$E(\genU,\lambda)$.
	We also denote by~$\tilde{w}$, resp.~$\tilde{w}^\perp$, the orthogonal complement of~$\genU_{\tilde{z}}$, resp.~$\genU_{\tilde{z}^\perp}$, in~$\tilde{z}$, resp.~$\tilde{z}^\perp$.
	Furthermore, we define~$\tilde{\mu}=-\genUU + \genU_{\tilde{z}^\perp}/2\genU^2_{\tilde{z}^\perp} + \genU_{\tilde{z}}/2\genU^2_{\tilde{z}}$.
	\begin{enumerate}[label=(\roman*)]
	\item The isometry~$E(\genU,\lambda)$ fixes~$\genU$, i.e.~$E(\genU,\lambda)(u)=u$. \label{basicpropEich,ufixed}
	\item We have that~$E(\genU,\lambda)(\tilde{w})=w$ and~$E(\genU,\lambda)(\tilde{w}^\perp)=w^\perp$.\label{basicpropEich,imagewwperp}
	\item The linear map~$\borw$ is such that~$\borw(\genvec)=\big(g\circ E(\genU,\lambda)\big)^{\#}(\genvec)$, for every~$\genvec\in\brK\otimes\RR$.\label{auxforFexpnohermitdom,1}
	\item We have that~$\genvec_{w^\perp}^2=\genvec^2_{\tilde{w}^\perp}$ for every~$\genvec\in\brK\otimes\RR$.\label{auxforFexpnohermitdom,3}
	\item \textcolor{\myblue}{Let~$\lambda'\in\brK\otimes\RR$, then~$e\big((\lambda',\tilde{\mu})\big)=e\big((\lambda',\mu)\big)\cdot e\big((\lambda',\lambda)\big)$.}\label{auxforFexpnohermitdom,4}
	\end{enumerate}
	\end{prop}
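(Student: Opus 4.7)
The approach is to first establish the invariance properties of the Eichler transformation (items (i) and (ii)), then use these as a backbone to deduce (iii)--(v) via orthogonal decompositions. Throughout, the key is that $E(\genU,\lambda)$ is an isometry mapping $\tilde z$ to $z$ and fixing $\genU$, so it intertwines the two decompositions $V = \tilde{z}\oplus\tilde{z}^\perp = w\oplus\RR\genU_z\oplus w^\perp\oplus\RR\genU_{z^\perp}$ (on the "$\sim$" side) and $V = z\oplus z^\perp = \tilde w\oplus\RR\genU_{\tilde z}\oplus\tilde w^\perp\oplus\RR\genU_{\tilde z^\perp}$.

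Item (i) is an immediate one-line check: plug $\genvec=\genU$ into the definition of~$E(\genU,\lambda)$ and use $\genU^2=0$ together with $(\genU,\lambda)=0$ (which holds since $\lambda\in M\otimes\RR$ and $M\subset\genU^\perp$). For (ii), since $E(\genU,\lambda)$ is an isometry and maps $\tilde z\to z$, $\tilde z^\perp\to z^\perp$, applying it to the orthogonal decomposition $\genU=\genU_{\tilde z}+\genU_{\tilde z^\perp}$ and using (i) yields $\genU_z=E(\genU,\lambda)(\genU_{\tilde z})$ and $\genU_{z^\perp}=E(\genU,\lambda)(\genU_{\tilde z^\perp})$ by uniqueness of the orthogonal projection. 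Taking orthogonal complements inside $\tilde z$ and $\tilde z^\perp$ then gives $E(\genU,\lambda)(\tilde w)=w$ and $E(\genU,\lambda)(\tilde w^\perp)=w^\perp$.

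For (iii), take $\genvec\in M\otimes\RR$ and decompose it in both ways as above. Applying $E(\genU,\lambda)$ to the "$\sim$"-side decomposition and using (ii), one sees that $E(\genU,\lambda)(\genvec_{\tilde w^\perp})\in w^\perp$ and $E(\genU,\lambda)(\genvec_{\tilde w})\in w$. On the other hand, a direct computation shows $E(\genU,\lambda)(\genvec)=\genvec+(\genvec,\lambda)\genU$ (using $(\genvec,\genU)=0$ and $\genU^2=0$); comparing the $w\oplus w^\perp$ components of both expressions forces $E(\genU,\lambda)(\genvec_{\tilde w^\perp})=\genvec_{w^\perp}$ and $E(\genU,\lambda)(\genvec_{\tilde w})=\genvec_{w}$, from which (iii) follows by substituting into $\borw(\genvec)=g(\genvec_{w^\perp}+\genvec_w)$. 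Part (iv) is then immediate: since $E(\genU,\lambda)$ is an isometry carrying $\genvec_{\tilde w^\perp}$ to $\genvec_{w^\perp}$, squared norms are preserved.

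The nontrivial step is (v). The plan is to transport~$\tilde\mu$ to the $z$-side by~$E(\genU,\lambda)$. Using that $E(\genU,\lambda)$ fixes $\genU$ and sends $\genU_{\tilde z^\perp}/2\genU_{\tilde z^\perp}^2+\genU_{\tilde z}/2\genU_{\tilde z}^2$ to $\genU_{z^\perp}/2\genU_{z^\perp}^2+\genU_z/2\genU_z^2$ (using (ii) plus the norm equalities from (iv) applied to $\genU$), while $E(\genU,\lambda)(\genUU)=\genUU-\lambda-q(\lambda)\genU+(\genUU,\lambda)\genU$, one derives an identity of the form
\[
E(\genU,\lambda)(\tilde\mu)=\mu+\lambda-(\genUU,\lambda)\genU+q(\lambda)\genU.
\]
Pairing with $\lambda'\in M\otimes\RR$, using the isometry property $\bigl(\lambda',E(\genU,\lambda)^{-1}(\mu)\bigr)=\bigl(E(\genU,\lambda)(\lambda'),\mu\bigr)$, noting $E(\genU,\lambda)(\lambda')=\lambda'+(\lambda',\lambda)\genU$, and then checking the key identity $(\genU,\mu)=0$ by direct expansion (the $-(\genU,\genUU)=-1$ is cancelled by the two $1/2$ contributions from the projections of $\genU$) reduces $(\lambda',\tilde\mu)-(\lambda',\mu)-(\lambda',\lambda)$ to an integer (in fact zero), which yields (v) upon exponentiating. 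The main obstacle I anticipate is this bookkeeping in (v): one must carefully handle the possibly nonzero pairing $(\genUU,\lambda)$ in the general (non-split) setting, though it only contributes a multiple of $\genU$ which is killed by pairing with $\lambda'\in M\otimes\RR$.
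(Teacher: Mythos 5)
Your proposal is correct, and in fact the paper offers no proof to compare against: the proposition is stated in Section~4.1 with the remark that the results are well known and the details are left to the reader, and your argument is exactly the standard one that remark presupposes (use that $E(\genU,\lambda)$ is an isometry fixing $\genU$ and carrying $\tilde z,\tilde w,\tilde w^\perp$ to $z,w,w^\perp$, then compare orthogonal decompositions). The computation in (v) checks out: $E(\genU,\lambda)(\tilde\mu)=\mu+\lambda+\big(q(\lambda)-(\genUU,\lambda)\big)\genU$, and since $(\lambda',\genU)=0$ and $(\genU,\mu)=-1+\tfrac12+\tfrac12=0$, one gets the exact identity $(\lambda',\tilde\mu)=(\lambda',\mu)+(\lambda',\lambda)$. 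One small slip of citation: in (v) you invoke ``(iv) applied to $\genU$'' for $\genU_{\tilde z^\perp}^2=\genU_{z^\perp}^2$ and $\genU_{\tilde z}^2=\genU_z^2$, but (iv) as stated concerns $w^\perp$-projections of vectors of $\brK\otimes\RR$, and $\genU$ is not such a vector; the equalities you need follow instead directly from (ii) together with the isometry property, which you have already established, so this is a wording issue rather than a gap.
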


	It is well-known that if~$F\colon G\to\CC$ is a smooth function invariant with respect to the Eichler transformations~$E(\genU,\lambda)$, where~$\lambda\in \brK$, then~$F$ admits a Fourier expansion of the form
	\be\label{eq;genshapeFexpinvEichler}
	F(g)=\sum_{\lambda\in \brK'}c_\lambda(g) e\big((\lambda,\mu)\big).
	\ee
	Here the Fourier coefficients~$c_\lambda \colon G\to\CC$ are functions invariant with respect to the Eichler transformations~$E(\genU,\genvec)$ with~$\genvec\in\brK\otimes\RR$.
	
	One can easily check that the defining integrals~$\intfunct$ of the lift~$\KMliftbase(f)$ are~$\brK$-invariant with respect to Eichler transformations, hence they admit a Fourier expansion.
	In Section~\ref{sec;Fexpdefintvv} we will use Proposition~\ref{prop;auxforFexpnohermitdom} to show that the series obtained by applying the unfolding method to~$\intfunct$ is actually a Fourier expansion of the same shape as in~\eqref{eq;genshapeFexpinvEichler}.
	
	\subsection{The Fourier expansion of the defining integrals}
	\label{sec;Fexpdefintvv}
	In order to unfold~$\intfunct$, its integrand has to be rewritten as a Poincaré series. 
	This will be achieved by means of Theorem \ref{thm;fromborchspltheta} and Lemma \ref{lemma;scalarprodtosublattice}.
	To state the latter, we need to introduce some notation.
	
	As in~\cite{bo;grass} we associate to any~$f: \HH \to \CC[L'/L]$ a function 
	\[
	F_{\brK}(\tau; r,t) = \sum_{\borgamma \in K'/K} f_\borgamma(\tau ; r ,t) \otimes \mathfrak{e}_\borgamma,
	\]
	where 
	\[
	f_\borgamma(\tau ; r , t) = \sum_{\substack{\lambda \in L_0'/L \\ \prol(\lambda) = \borgamma}} e\left(- r 	(\lambda,\genUU) - rtq(\genUU)\right) f_{L + \lambda + t \genUU}(\tau).
	\]
	If $f$ is a modular form of weight $k$ with respect to $\rho_L$, $F_{\brK}$ is a modular form to $\brK$ of the same weight with respect to $\rho_M$; see \cite[Thm 2.6]{br;borchp}.

	\begin{lemma}[{See \cite[2.4.8]{KieferThesis}}]\label{lemma;scalarprodtosublattice}
		If~$f: \HH \to \CC[L'/L]$ and $g: \HH \to \CC[\borK'/\borK]$, then
		\[
		\left\langle f(\tau) , g (\tau) \sum_{l \in \ZZ/N\ZZ} \mathfrak{e}_{\tfrac{lu}{N}} \left(-\frac{lr}{N}\right) \right\rangle = \langle F_{\borK}(\tau,-r,0) , g(\tau)\rangle.
		\]
	\end{lemma}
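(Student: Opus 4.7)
The plan is to prove the identity by reducing to basis vectors and matching the two sides term by term. Since both sides are $\CC$-linear in~$f$ and conjugate-linear in~$g$, it suffices to establish the identity when $f(\tau)=\mathfrak{e}_\delta$ for a single class $\delta\in L'/L$ and $g(\tau)=\mathfrak{e}_\borgamma$ for a single class $\borgamma\in \brK'/\brK$.

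First I will expand the right-hand side. Substituting $f=\mathfrak{e}_\delta$ into the definition of $F_\brK$ collapses the defining sum to a single non-zero term at $\lambda=\delta$, occurring exactly when $\delta\in L_0'/L$; in that case $F_\brK(\tau,-r,0)=e\bigl(r(\delta,u')\bigr)\,\mathfrak{e}_{\prol(\delta)}$, and it vanishes otherwise. Pairing against $\mathfrak{e}_\borgamma$ therefore yields $e\bigl(r(\delta,u')\bigr)$ precisely when $\delta\in L_0'/L$ and $\prol(\delta)=\borgamma$, and zero otherwise.

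Next I will expand the left-hand side. The product $\mathfrak{e}_\borgamma\cdot \sum_l e(-lr/N)\mathfrak{e}_{lu/N}$ must be interpreted as an element of $\CC[L'/L]$; the natural choice, consistent with the conventions of \cite{br;borchp}, is to fix a lift $\tilde\borgamma\in L_0'/L$ of $\borgamma$ under $\prol$ and set $\mathfrak{e}_\borgamma\cdot \mathfrak{e}_{lu/N}=\mathfrak{e}_{\tilde\borgamma+lu/N}$. From $|L_0'/L|=N\cdot|\brK'/\brK|$ it follows that the assignment $(\borgamma,l)\mapsto \tilde\borgamma+lu/N$ induces a bijection $\brK'/\brK\times \ZZ/N\ZZ \to L_0'/L$, so every element of $\prol^{-1}(\borgamma)$ arises uniquely in this way. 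Pairing with $\mathfrak{e}_\delta$ then forces the existence of a unique $l$ with $\tilde\borgamma+lu/N=\delta$ in $L'/L$; this again requires $\delta\in L_0'/L$ with $\prol(\delta)=\borgamma$, and the left-hand side equals $e(lr/N)$ in that case.

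Matching the two expressions then boils down to the phase identity $e(lr/N)=e\bigl(r(\delta,u')\bigr)$, equivalently $(\tilde\borgamma+lu/N,u')\equiv l/N\pmod\ZZ$. This is the main technical checkpoint: it follows from $(u,u')=1$ together with the fact that the lift $\tilde\borgamma$ can be chosen so that $(\tilde\borgamma,u')\in\ZZ$. Once this is verified on representatives, the proof is complete.
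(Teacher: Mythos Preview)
The paper does not give its own proof of this lemma; it is stated with a bare citation to Kiefer's thesis. Your direct verification by reducing to basis vectors is therefore the natural thing to do, and the computation of both sides is correct.

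One point deserves more care. Your ``main technical checkpoint'' is the claim that the lift $\tilde\borgamma\in L_0'/L$ can be chosen with $(\tilde\borgamma,u')\in\ZZ$. As you observe, shifting $\tilde\borgamma$ by $lu/N$ changes $(\tilde\borgamma,u')$ by $l/N$, so the existence of such a lift is equivalent to $(\tilde\borgamma,u')\in\tfrac{1}{N}\ZZ$ for \emph{every} lift. This is not automatic from $\tilde\borgamma\in L_0'$ and $u'\in L'$ alone; it depends on the precise convention for the embedding $\CC[\brK'/\brK]\hookrightarrow\CC[L'/L]$ used to make sense of the product $\mathfrak e_\borgamma\cdot\mathfrak e_{lu/N}$. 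In Bruinier's setup (and in Borcherds' original Theorem~5.2, which is what Theorem~\ref{thm;fromborchspltheta} here rephrases), the section $\brK'/\brK\to L_0'/L$ is constructed using representatives in $L\cap u^\perp$ adjusted precisely so that the pairing with $u'$ is integral --- this is exactly the normalisation that makes the theta splitting formula hold. You correctly point to \cite{br;borchp} for this, but since the identity you are proving is essentially a restatement of that compatibility, you should make explicit which section is being used rather than asserting that ``a'' lift with the desired property exists. Once that is pinned down, the argument is complete.
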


	With such a preparatory result, the desired decomposition of the integrand of~$\intfunct$ may be computed.
	
	\begin{lemma}\label{lemma;IntKernelPoincare}
		Define   
		\bas
		h_{\aalpha}(\tau,g) 
		\coloneqq& \frac{1}{\sqrt{2} \lvert \genU_{z^\perp} \rvert} \sum_{h^+} 
		\sum_{r = 1}^{\infty} \left(\frac{r}{2i}\right)^{h^+} \Im(\tau)^{k-h^+} \exp \left( - \frac{\pi r^2}{2 \Im( \tau)\genU_{z^\perp}^2 }\right) \\
		&\times \left\langle F_{\brK}(\tau , - r , 0) , \Theta_{\brK}\big( \tau,r\mu,0,\borw,\polw{\aalpha,\borw}{h^+}{0}\big) \right\rangle. 
		\eas 
		Then the integral kernel of $\intfunct(g)$ may be rewritten as
		\ba\label{eq;unfoldingtrickform}
		y^{k} \langle f(\tau), \Theta_L(z,g,\pol_{\aalpha}) \rangle 
		&=
		\frac{y^{k}}{\sqrt{2} \lvert \genU_{z^\perp} \rvert} \langle F_{\brK}(\tau; 0,0) , \Theta_{\brK}(\tau,\borw,\polw{\aalpha,\borw}{0}{0}) \rangle
		\\
		&\quad +	\sum_{\gamma=\left(\begin{smallmatrix}
				* & *\\ c & d
			\end{smallmatrix}\right)\in\Gamma_\infty\backslash\SL_2(\ZZ)}\hha(\gamma\cdot\tau,g)
		\ea
		for every~$g\in G$ and~$z\in\Gr(V)$ such that~$g\colon z\mapsto z_0$.
	\end{lemma}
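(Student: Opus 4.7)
The plan is to substitute Borcherds' decomposition of $\Theta_L(\tau,g,\pol_{\aalpha})$ given by Theorem~\ref{thm;fromborchspltheta} into the integral kernel $y^k \langle f(\tau), \Theta_L(\tau,g,\pol_{\aalpha}) \rangle$ and to reorganize the two resulting pieces into the constant summand and the Poincaré-like series on the right-hand side of~\eqref{eq;unfoldingtrickform}. The first line of Borcherds' formula contributes $y^k/(\sqrt{2}|\genU_{z^\perp}|)$ multiplied by $\langle f(\tau), \Theta_{\brK}(\tau,\borw,\polw{\aalpha,\borw}{0}{0}) \sum_{l \in \ZZ/N\ZZ} \mathfrak{e}_{lu/N} \rangle$; applying Lemma~\ref{lemma;scalarprodtosublattice} with $r = 0$ and $t = 0$ converts this pairing into $\langle F_{\brK}(\tau;0,0), \Theta_{\brK}(\tau,\borw,\polw{\aalpha,\borw}{0}{0}) \rangle$, which is exactly the first summand in~\eqref{eq;unfoldingtrickform}.

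For the second, Poincaré-shaped piece of Borcherds' formula, the key step is to process the combination $\phi(\tau)^{-(p+q)} \rho_L(\gamma)^{-1}$ sitting in the second slot of $\langle\cdot,\cdot\rangle$. By $\CC$-antilinearity in the second component, the scalar $\phi(\tau)^{-(p+q)}$ exits as $\overline{\phi(\tau)}^{-(p+q)}=\overline{\phi(\tau)}^{-2k}$. Unitarity of $\rho_L$ with respect to $\langle\cdot,\cdot\rangle$ moves $\rho_L(\gamma)^{-1}$ from the right slot to $\rho_L(\gamma)$ on the left, and the modular transformation $f(\gamma\tau) = \phi(\tau)^{2k} \rho_L(\gamma) f(\tau)$ produces a further factor $\phi(\tau)^{-2k}$ while replacing $f(\tau)$ by $f(\gamma\tau)$. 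Using $|\phi(\tau)|^{4} = |c\tau+d|^{2}$ together with $\Im(\gamma\tau) = y/|c\tau+d|^2$ one obtains
\[
y^k \langle f(\tau), \phi(\tau)^{-(p+q)} \rho_L(\gamma)^{-1} v \rangle = \Im(\gamma\tau)^k \langle f(\gamma\tau), v \rangle.
\]
A second invocation of Lemma~\ref{lemma;scalarprodtosublattice}, now with parameters $-r$ and $t = 0$, rewrites $\langle f(\gamma\tau), \Theta_{\brK}(\gamma\tau, r\mu, 0, \borw, \polw{\aalpha,\borw}{h^+}{0}) \sum_l \mathfrak{e}_{lu/N}(-lr/N) \rangle$ as $\langle F_{\brK}(\gamma\tau;-r,0), \Theta_{\brK}(\gamma\tau, r\mu, 0, \borw, \polw{\aalpha,\borw}{h^+}{0}) \rangle$. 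After identifying $(-2i)^{-h^+} r^{h^+}$ with $(r/(2i))^{h^+}$ (up to the standard sign convention), the summand attached to each $(\gamma,\phi)$ reduces precisely to $\hha(\gamma\tau,g)$.

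Finally, the sum over $(\gamma,\phi) \in \overline{\Gamma}_\infty\backslash\Mp_2(\ZZ)$ descends to a sum over $\gamma \in \Gamma_\infty\backslash\SL_2(\ZZ)$: the combined $\phi$-dependence after the steps above reduces to $\overline{\phi(\tau)}^{-2k}\phi(\tau)^{-2k} = |\phi(\tau)|^{-4k}$, a quantity depending on $\phi$ only through $|c\tau+d|$ and hence only on the underlying $\gamma$. The principal obstacle is precisely this bookkeeping of automorphy factors: one must verify that the metaplectic factor $\phi(\tau)^{-(p+q)}$ from Borcherds' formula, combined with the antilinearity of the scalar product and the modularity of $f$, produces exactly $\Im(\gamma\tau)^k/y^k$. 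This is what permits the summand to be expressed in terms of $\gamma\tau$ alone and hence to be recognized as $\hha(\gamma\tau,g)$, bringing the kernel into the Poincaré form of~\eqref{eq;unfoldingtrickform}.
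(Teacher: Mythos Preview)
Your proposal is correct and follows essentially the same route as the paper: insert Theorem~\ref{thm;fromborchspltheta}, apply Lemma~\ref{lemma;scalarprodtosublattice} to both pieces, and use unitarity of~$\rho_L$ together with the modularity of~$f$ to convert the automorphy factors into~$\Im(\gamma\tau)^k$. One small clarification: the passage from~$(-2i)^{-h^+}$ to~$(2i)^{-h^+}$ is not a ``sign convention'' but simply the complex conjugation coming from the antilinearity of~$\langle\cdot,\cdot\rangle$ in the second slot, since~$\overline{(-2i)^{-h^+}}=(2i)^{-h^+}$; once you make that explicit, your argument matches the paper's exactly.
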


	\begin{proof}
		By inserting the decomposition of $\Theta_L(z,g,\pol_{\aalpha})$ provided by Theorem \ref{thm;fromborchspltheta} in the left hand side of \eqref{eq;unfoldingtrickform} we obtain
	\begin{align*}
		&\Im(\tau)^k \langle f(\tau) , \FFa(\tau,g) \rangle 	\\
		&= \frac{1}{\sqrt{2} \lvert \genU_{z^\perp} \rvert} \Im(\tau)^{k} \bigg\langle f(\tau), \Theta_{\borK}(\tau,\borw,\polw{\aalpha,\borw}{0}{0}) \sum_{l \in \ZZ/N\ZZ} \mathfrak{e}_{\tfrac{lu}{N}} \bigg\rangle  \\ 
		&\quad + \frac{\Im(\tau)^k}{\sqrt{2} \lvert \genU_{z^\perp} \rvert} \sum_{(\gamma,\phi) \in \overline{\Gamma}_\infty\backslash \Mp_2(\ZZ)} \sum_{h^+} 
		\sum_{r = 1}^{\infty} \left(\frac{r}{2i}\right)^{h^+} \frac{ \overline{\phi(\tau)}^{- 2k} }{\Im(\gamma \tau)^{h^+}} \exp \left( - \frac{\pi r^2}{2 \Im(\gamma \tau)\genU_{z^\perp}^2 }\right) \\
		&\quad \times \left\langle f(\tau) , \rho_L(\gamma)^{-1} \bigg(\Theta_{\brK}\big(\gamma \tau,r\mu,0,\borw,\polw{\aalpha,\borw}{h^+}{0}\big) \sum_{l \in \ZZ/N\ZZ} \mathfrak{e}_{\tfrac{lu}{N}} \left(-\frac{lr}{N}\right) \bigg) \right\rangle. \\
		\intertext{By Lemma \ref{lemma;scalarprodtosublattice} and the transformation property of the imaginary part $\Im$ under Möbius transformations the expression above equals the desired Poincaré series:}
		&\quad \frac{1}{\sqrt{2} \lvert \genU_{z^\perp} \rvert} \Im(\tau)^{k} \langle F_{\borK}(\tau,0,0), \Theta_{\borK}(\tau,\borw,\polw{\aalpha,\borw}{0}{0}) \rangle  \\ 
		&\quad + \frac{1}{\sqrt{2} \lvert \genU_{z^\perp} \rvert} \sum_{\gamma \in \Gamma_\infty\backslash \SL_2(\ZZ)} \sum_{h^+}  
		\sum_{r = 1}^{\infty} \left(\frac{r}{2i}\right)^{h^+} \Im(\gamma \tau)^{k-h^+} \exp \left( - \frac{\pi r^2}{2 \Im(\gamma \tau)\genU_{z^\perp}^2 }\right) \\
		&\quad \times \left\langle F_{\brK}(\gamma\tau , - r , 0) , \Theta_{\brK}\big(\gamma \tau,r\mu,0,\borw,\polw{\aalpha,\borw}{h^+}{0}\big) \right\rangle. 
	\end{align*}
	This is exactly the stated relation if $h_{\aalpha}$ is inserted in \eqref{eq;unfoldingtrickform}.
	\end{proof}

	By means of Lemma \ref{lemma;IntKernelPoincare} we may then unfold the defining integrals~\eqref{eq;intwwtu} of the Kudla--Millson lift as
	\ba\label{eq;firstunftrickap}
	\intfunct(g) 
	=&\int_{\SL_2(\ZZ)\backslash\HH}y^k \langle f(\tau) , \FFa(\tau,g) \rangle\frac{dx\,dy}{y^2}\\
	=&\int_{\SL_2(\ZZ)\backslash\HH}\frac{y^{k}}{\sqrt{2} \lvert \genU_{z^\perp} \rvert} \langle F_{\brK}(\tau; 0,0) , \Theta_{\brK}(\tau,\borw,\polw{\aalpha,\borw}{0}{0}) \rangle \frac{dx\,dy}{y^2} \\
	&+ 2\int_{\Gamma_\infty\backslash\HH}\hha(\tau,g)\frac{dx\,dy}{y^2}.
	\ea
	
	Next, 
	we compute a Fourier expansion of the defining integral~\eqref{eq;intwwtu} of~$\KMliftbase$, for every~$\aalpha$.
	To do so, we begin by rewriting the last term of the right-hand side of~\eqref{eq;firstunftrickap} by Lemma~\ref{lemma;IntKernelPoincare} as
	\ba\label{eq;firstunftrickap2}
	2\int_{\Gamma_\infty\backslash\HH}\hha(\tau,g)\frac{dx\,dy}{y^2}=
	\int_{0}^{+\infty}\int_0^1\frac{\sqrt{2}y^{k-2}}{\lvert \genU_{z^\perp} \rvert}\sum_{h^+=0}^{q}(2iy)^{-h^+}\hspace{2cm}\\
	\times
	\sum_{r\ge1} r^{h^+}\cdot \exp\Big(
	-\frac{\pi r^2}{2y\genU_{z^\perp}^2}
	\Big) \left\langle F_{\brK}(\tau,0,0), \Theta_{\brK}\big(\tau,r\mu,0,\borw,\polw{\aalpha,\borw}{h^+}{0}\big) \right\rangle dx\,dy.
	\ea
	We are going to replace in~\eqref{eq;firstunftrickap2} the cusp form~$f$ with its Fourier expansion, and the Siegel theta function~$\Theta_{\brK}$ with its defining series.
	We denote the Fourier expansion of $f$ by
	\be\label{eq;fexpf}
	f(\tau)= \sum_{\lambda \in L'/L} \sum_{n>0}c(f_{\lambda},n)e(n\tau) \cdot \mathfrak{e}_\lambda = \sum_{\lambda \in L'/L} \sum_{n>0}c_n(f_\lambda)\exp(-2\pi ny)e(nx) \cdot \mathfrak{e}_\lambda.
	\ee
	
	Recall that we denote by~$(\argdot {,} \argdot)_w$ the \emph{standard majorant} of~$\brK\otimes\RR$ with respect to~$w\in\Gr(\brK)$, that is~$(\genvec,\genvec)_w=(\genvec_{w^\perp},\genvec_{w^\perp})-(\genvec_w,\genvec_w)$ for every $\genvec\in \brK\otimes\RR$.
	We rewrite the defining series of~$\Theta_{\brK}$ with respect to the decomposition~${\tau=x+iy}$ in real and imaginary part as
	\ba\label{eq;precFexpofTllor}
	\Theta_{\brK}\big(\tau,r\mu,0,\borw,\polw{\aalpha,\borw}{h^+}{0}\big)
	&=
	y^{(q-1)/2}  
	\sum_{\borgamma \in \brK'/\brK} 
	\sum_{\lambda\in M + \borgamma}  \exp(-\Delta /8\pi y)(\pol)\big(\borw(\lambda)\big)\\
	&\quad \times \exp(- \pi y (\lambda,\lambda)_\omega ) \cdot e\Big(x q(\lambda) - r(\lambda,\mu)\Big) \cdot \mathfrak{e}_\borgamma.
	\ea
	We are now ready to prove the main result of this section.
	It provides a Fourier expansion of~$\intfunct$ with the same shape as in~\eqref{eq;genshapeFexpinvEichler}.
	
	\begin{thm}\label{thm;Fourexpofcoef}
		Let~$f\in S^k_1$ be an elliptic cusp form.
		The Fourier expansion of the defining integrals
		\[
			\intfunct\colon G\to\CC, \qquad g \mapsto 
			\int_{\SL_2(\ZZ)\backslash\HH}
			y^k \big\langle f(\tau) , \FFa(\tau,g) \big\rangle\frac{dx\,dy}{y^2}
		\]
		of the Kudla--Millson lift is
		\ba\label{eq;Iaalpha}
		\intfunct(g) 
		=& 
		\int_{\SL_2(\ZZ)\backslash\HH}
		\frac{y^{k}}{\sqrt{2} \lvert \genU_{z^\perp} \rvert} \big\langle F_{\brK}(\tau; 0,0) , \Theta_{\brK}(\tau,\borw,\polw{\aalpha,\borw}{0}{0}) \big\rangle \frac{dx\,dy}{y^2} \\
		&+\sum_{\lambda \in \borK'} \sum_{\substack{t \geq 1 \\ t \mid \lambda}} \frac{\sqrt{2}}{|\genU_{z^\perp}|} 
		\sum_{h^+} \left(\frac{t}{2i}\right)^{h^+} \sum_{\substack{\lambdatwo \in L_0'/L \\ \pi(\lambdatwo) = \lambda/t + M}}  e\Big(t(\lambdatwo,\genUU)\Big) c(f_{\lambdatwo},q(\lambda)/t^2)  \\
		& \times \int_{0}^{\infty} y^{q + (p  -5)/2 - h^+} 
		\exp\left(-\frac{2\pi y \lambda_{w^\perp}^2}{t^2} - \frac{\pi t^2}{2 y\genU_{z^\perp}^2 } \right) 
		\\
		& \times  \exp(-\Delta /8\pi y)(\polw{\aalpha,\borw}{h^+}{0})\big( \borw(\lambda/t)\big) dy \cdot e\Big( (\lambda,\mu) \Big). \\ 
		\ea
		Here we say that an integer $t\ge 1$ divides $\lambda\in\brK'$, in short~$t|\lambda$, if and only if~$\lambda/t$ is still a lattice vector in $\brK'$.  
	\end{thm}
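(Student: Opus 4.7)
The plan is to start from the unfolded expression \eqref{eq;firstunftrickap}, whose first summand already coincides with the first line of \eqref{eq;Iaalpha}. Only the second summand requires work. Taking the rewriting \eqref{eq;firstunftrickap2} as our starting point (obtained by choosing $\{0\le x<1,\ y>0\}$ as fundamental domain for $\Gamma_\infty\backslash\HH$), I would substitute the Fourier expansion \eqref{eq;fexpf} of $f$, which through the very definition of $F_\brK$ produces the character $e(t(\eta,\genUU))$ attached to the auxiliary parameter, together with the defining series \eqref{eq;precFexpofTllor} of $\Theta_\brK\bigl(\tau,r\mu,0,\borw,\polw{\aalpha,\borw}{h^+}{0}\bigr)$.

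Next, I would carry out the $x$-integration over $[0,1]$. Orthogonality of the additive characters $x\mapsto e(mx)$ kills every term except those with $n=q(\lambda)$; cuspidality of $f$ forces $n>0$, hence only vectors $\lambda\in\brK'$ with $q(\lambda)>0$ contribute. The two $y$-exponentials can then be combined via
\[
-2\pi q(\lambda)y-\pi y(\lambda,\lambda)_w=-2\pi y\,\lambda_{w^\perp}^2,
\]
which follows from $(\lambda,\lambda)+(\lambda,\lambda)_w=2\lambda_{w^\perp}^2$. The remaining $y$-prefactors $y^{k-2}\cdot y^{-h^+}\cdot y^{(q-1)/2}$ coming from $\hha$, the hyperbolic measure, and the normalization of $\Theta_\brK$ collapse to $y^{q+(p-5)/2-h^+}$, using $k=(p+q)/2$.

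At this point the integrand is a double sum over $r\ge1$ and $\lambda_{\mathrm{old}}\in\brK'$ with $q(\lambda_{\mathrm{old}})>0$. I would reindex via $t=r$ and $\lambda=t\lambda_{\mathrm{old}}$, so that $(t,\lambda_{\mathrm{old}})$ is in bijection with the set of pairs $(t,\lambda)\in\ZZ_{\ge1}\times\brK'$ satisfying $t\mid\lambda$. Under this substitution
\[
q(\lambda_{\mathrm{old}})=\tfrac{q(\lambda)}{t^2},\qquad \lambda_{\mathrm{old},w^\perp}^2=\tfrac{\lambda_{w^\perp}^2}{t^2},\qquad \borw(\lambda_{\mathrm{old}})=\tfrac{\borw(\lambda)}{t},\qquad e\bigl(r(\lambda_{\mathrm{old}},\mu)\bigr)=e\bigl((\lambda,\mu)\bigr),
\]
and the projection condition $\prol(\eta)=\lambda_{\mathrm{old}}+\brK$ becomes $\prol(\eta)=\lambda/t+\brK$, matching exactly the summand of \eqref{eq;Iaalpha}.

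I expect the main obstacle to be the careful bookkeeping: tracking how the antilinearity of $\langle\,\cdot\,,\,\cdot\,\rangle$ in its second argument flips the sign in the exponentials $e(x q(\lambda)-r(\lambda,\mu))$; unraveling the projection $\prol\colon L_0'/L\to\brK'/\brK$ so that the $\borgamma$-component of $F_\brK(\tau;-r,0)$ is correctly expressed as a sum over $\eta\in L_0'/L$ with $\prol(\eta)=\borgamma$; and verifying that the interchange of sums and integrals is justified. The latter should follow from absolute convergence, a consequence of the rapid decay of cusp forms near the cusp and the Gaussian decay of each theta summand in both $y\to0$ and $y\to\infty$ regimes, the latter being uniform once the factor $\exp(-\pi r^2/(2yu_{z^\perp}^2))$ is taken into account.
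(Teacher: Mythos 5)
Your proposal is correct and follows essentially the same route as the paper's proof: start from the unfolded expression \eqref{eq;firstunftrickap}, insert the Fourier expansion \eqref{eq;fexpf} of $f$ (through $F_{\brK}$) and the defining series \eqref{eq;precFexpofTllor} of $\Theta_{\brK}$, kill the $x$-integral by character orthogonality so only $n=q(\lambda)>0$ survives, combine the exponentials via $(\lambda,\lambda)+(\lambda,\lambda)_w=2\lambda_{w^\perp}^2$, check the $y$-powers collapse to $y^{q+(p-5)/2-h^+}$, and reindex $(r,\lambda_{\mathrm{old}})\mapsto(t,\lambda/t)$ with $t\mid\lambda$. The only point the paper adds that you do not mention is the concluding verification, via Proposition~\ref{prop;auxforFexpnohermitdom}, that the constant term and the coefficients multiplying $e\big((\lambda,\mu)\big)$ are invariant under the Eichler transformations $E(\genU,\genvec)$ with $\genvec\in\brK\otimes\RR$, which is what identifies the computed series as the Fourier expansion in the sense of \eqref{eq;genshapeFexpinvEichler}.
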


	\begin{proof}[Proof of Theorem~\ref{thm;Fourexpofcoef}]
		We consider the unfolding~\eqref{eq;firstunftrickap} of~$\intfunct$.
		The first summand of the right-hand side of~\eqref{eq;firstunftrickap} provides the first term on the right hand side of~\eqref{eq;Iaalpha}.
		This is the constant term of the Fourier expansion of~$\intfunct$.
		In fact, one can easily check by using Proposition~\ref{prop;auxforFexpnohermitdom} that such a term is~$\brK$-invariant.
		The fact that this is the only term which is not multiplied by any~$e\big((\lambda,\mu)\big)$, with~$\lambda\in\brK'$ non-zero, follows from the rewriting of the integral of~$\hha$ below.
		
		We now begin the computation of the second summand appearing on the right-hand side of~\eqref{eq;firstunftrickap}.
		First of all, we rewrite
		\begin{equation}\label{eq;inproofofFourierTheorem}
			y^{(1-q)/2}
			\left\langle F_{\brK}(\tau, -r, 0) , \Theta_{\brK}(\tau,r\mu,0,\borw,\polw{\aalpha,\borw}{h^+}{0})\right\rangle
		\end{equation}
		with respect to the real and imaginary parts of~$\tau\in\HH$.
		To do so, we replace~$f$ and~$\Theta_{\brK}$ with~\eqref{eq;fexpf} and~\eqref{eq;precFexpofTllor}, respectively, deducing that \eqref{eq;inproofofFourierTheorem} equals
		\begin{align*}
			&	\sum_{\borgamma \in \borK'/\borK} \sum_{\substack{\lambdatwo \in L_0'/L \\ \prol(\lambdatwo) = \borgamma}} e\left(r (\lambdatwo,\genUU)\right) \sum_{\substack{n \in \ZZ + q(\lambdatwo) \\ n > 0}} c(f_{\lambdatwo},n) e(n\tau) \\
			&\quad \times \sum_{\lambda\in \borK + \borgamma}\exp(-\Delta /8\pi y)(\polw{\aalpha,\borw}{h^+}{0})\big( \borw(\lambda)\big)
			\cdot
			\exp(- \pi y (\lambda,\lambda)_\omega ) \cdot e\Big(r(\lambda,\mu) -x q(\lambda) \Big) \\
			&=	\sum_{\borgamma \in \borK'/\borK} \sum_{\lambda\in \borK + \borgamma} \sum_{\substack{\lambdatwo \in L_0'/L \\ \prol(\lambdatwo) = \borgamma}} \sum_{\substack{n \in \ZZ + q(\lambdatwo) \\ n > 0}} e\left(r (\lambdatwo,\genUU)\right) c(f_{\lambdatwo},n) \exp(-2\pi n y) e(nx) \\
			&\quad \times \exp(-\Delta /8\pi y)(\polw{\aalpha,\borw}{h^+}{0})\big( \borw(\lambda)\big)
			\cdot
			\exp(- \pi y (\lambda,\lambda)_\omega ) \cdot e\Big(r(\lambda,\mu) -x q(\lambda) \Big). 
		\end{align*}
		This may be rewritten by using that a set of representatives for $\lambdatwo$ is given by $\borgamma - (\borgamma,\zeta)/N \cdot u + b/N \cdot u$ where $b$ runs through a set of representatives mod $N$. As a consequence, we obtain the identity $q(\lambdatwo) = q(\borgamma) \equiv q(\lambda)$ so that the above expression equals
		\begin{align*}
			&\sum_{\lambda\in \borK'} \sum_{\substack{\lambdatwo \in L_0'/L \\ \prol(\lambdatwo) = \lambda + M}} \sum_{\substack{n \in \ZZ + q(\lambda) \\ n > 0}} e\left(r (\lambdatwo,\genUU)\right) c(f_{\lambdatwo},n) \exp(-2\pi n y - \pi y (\lambda,\lambda)_\omega ) \\
			&\quad \times \exp(-\Delta /8\pi y)(\polw{\aalpha,\borw}{h^+}{0})\big( \borw(\lambda)\big)
			\cdot
			e \Big(x (n - q(\lambda)) \Big)\cdot e\Big(r(\lambda,\mu) \Big).
		\end{align*}
		We insert the previous formula in the defining formula of~$\hha$ provided by Lemma~\ref{lemma;IntKernelPoincare}, deducing that
		\ba\label{eq;killingx}
		&2\int_{\Gamma_\infty\backslash\HH}\hha(\tau,g)\frac{dx\,dy}{y^2} \\
		&\!\! = \, \frac{\sqrt{2}}{|\genU_{z^\perp}|} \sum_{h^+} 
		\sum_{r \geq 1} \left(\frac{r}{2i}\right)^{h^+} \sum_{\lambda\in \borK'} \sum_{\substack{\lambdatwo \in L_0'/L \\ \prol(\lambdatwo) = \lambda + \borK}} \sum_{\substack{n \in \ZZ + q(\lambda) \\ n > 0}} c(f_{\lambdatwo},n)
		\int_{0}^{\infty} y^{q + (p-5)/2 - h^+}
		\\
		& \quad \times \exp\left(-2\pi n y - \pi y (\lambda,\lambda)_\omega - \frac{\pi r^2}{2 y\genU_{z^\perp}^2 } \right) \exp(-\Delta /8\pi y)(\polw{\aalpha,\borw}{h^+}{0})\big( \borw(\lambda)\big) dy\\
		& \quad \times  \int_{0}^{1} e \Big(x (n - q(\lambda)) \Big) dx \cdot e\Big(r \big((\lambda,\mu) + (\lambdatwo,\genUU)\big) \Big). 
		\ea
		The last integral appearing on the right-hand side of~\eqref{eq;killingx} may be computed as
		\bes
		\int_{0}^{1} e \Big(x \big(n - q(\lambda)\big) \Big) dx 
		=\begin{cases}
			1 & \text{if $n=q(\lambda)$,}\\
			0 & \text{otherwise.}
		\end{cases}
		\ees
		This simplifies the sum over $n$ in \eqref{eq;killingx} leaving only the term $n=q(\lambda)$. 
		Since $c_n(f_\lambdatwo) = 0$ for every $n \leq 0$ we deduce that  
		\begin{align}\label{eq;killedx}
		&2\int_{\Gamma_\infty\backslash\HH}\hha(\tau,g)\frac{dx\,dy}{y^2} \\
		&\!\! = \, \frac{\sqrt{2}}{|\genU_{z^\perp}|} \sum_{h^+} \nonumber
		\sum_{r \geq 1} \left(\frac{r}{2i}\right)^{h^+} \sum_{\lambda\in \borK'} \sum_{\substack{\lambdatwo \in L_0'/L \\ \prol(\lambdatwo) = \lambda + \borK}} c(f_{\lambdatwo},q(\lambda))  \\
		&\quad \times \int_{0}^{\infty} y^{q + (p-5)/2 - h^+} \nonumber
		\exp\left(-2\pi y \lambda_{w^\perp}^2 - \frac{\pi r^2}{2 y\genU_{z^\perp}^2 } \right) 
		\exp(-\Delta /8\pi y)(\polw{\aalpha,\borw}{h^+}{0})\big( \borw(\lambda)\big) dy\\
		&\quad \times  e\Big(r \big( (\lambda,\mu) + (\lambdatwo,\genUU) \big) \Big). \nonumber
		\end{align}
		We rewrite~\eqref{eq;killedx} by gathering the terms associated to~$e\big((\lambda,\mu)\big)$, for every~$\lambda$.
		This can be accomplished simply by replacing the sum~$\sum_{r\ge1}$ with~$\sum_{t\ge1,\,t|\lambda}$, and the lattice vector~$\lambda$ with~$\lambda/t$.
		In this way, we obtain that the expression above equals
		\begin{align*}
		&\frac{\sqrt{2}}{|\genU_{z^\perp}|}
		\sum_{\lambda\in \borK'}
		\sum_{\substack{t \geq 1 \\ t \mid \lambda}} 
		\sum_{h^+} \left(\frac{t}{2i}\right)^{h^+}
		\sum_{\substack{\lambdatwo \in L_0'/L \\ \prol(\lambdatwo) = \lambda/t + M}}  e\Big(t(\lambdatwo,\genUU)\Big) c(f_{\lambdatwo},q(\lambda)/t^2)
		\int_{0}^{\infty} y^{q + (p-5)/2 - h^+} 
		  \\
		&\quad \times 
		\exp\left(-\frac{2\pi y \lambda_{w^\perp}^2}{t^2} - \frac{\pi t^2}{2 y\genU_{z^\perp}^2 } \right) 
		\exp(-\Delta /8\pi y)(\polw{\aalpha,\borw}{h^+}{0})\big( \borw(\lambda/t)\big) dy
		\cdot e\big( (\lambda,\mu) \big).
		\end{align*}
		The fact that the series above is a Fourier expansion as in~\eqref{eq;genshapeFexpinvEichler} follows from the fact that the terms multiplying~$e\big((\lambda,\mu)\big)$ are invariant with respect to Eichler transformations of the form~$E(\genU,\genvec)$, with~$\genvec\in\brK\otimes\RR$.
		This follows easily from Proposition~\ref{prop;auxforFexpnohermitdom}.
	\end{proof}
		
	\section{The injectivity of the Kudla--Millson theta lift of genus~$1$}\label{sec;injKMgenus1}
	This section is devoted to the proof of the injectivity of the Kudla--Millson theta lift~$\KMliftbase$ of genus $1$ associated to indefinite lattices of general signature~$(p,q)$ that split off two (scaled) hyperbolic planes.
	This provides a generalization of~\cite{bruinier;converse} to lattices of general signature, as well as of~\cite{brfu} and~\cite{stein} to the case where the lattices are neither unimodular nor maximal.
	\begin{thm}\label{thm;injindeg1}
		Let $L$ be an even indefinite lattice of signature~$(p,q)$.
		\begin{enumerate}[label=(\roman*)]
			\item If~$L \cong K \oplus U(N) \oplus U$ for some even lattice~$K$ of signature $(p-2,q-2)$ and some positive integer~$N$, then the Kudla--Millson lift associated to $L$ is injective.
			\label{thm;injindeg1;pq>1}
			\item Let~$q=1$.
			If~$L \cong M \oplus U$ for some even lattice~$M$ of signature $(p-1,0)$ and~$M\otimes\ZZ_\primen$ splits off a hyperbolic plane over~$\ZZ_\primen$ for every prime~$\primen$, then the lift is injective.
			\label{thm;injindeg1;q=1}
			\item Let~$p=1$.
			Then the Kudla--Millson lift associated to~$L$ is identically zero.
			\label{thm;injindeg1;p=1}
		\end{enumerate}
	\end{thm}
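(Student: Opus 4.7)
The plan is to deduce Theorem~\ref{thm;injindeg1} from the explicit Fourier expansion of the defining integrals $\intfunct$ established in Theorem~\ref{thm;Fourexpofcoef}. Since the forms $g^*(\omega_{\aalpha})$ are pointwise linearly independent as $\aalpha$ varies, the vanishing of $\KMliftbase(f)$ is equivalent to $\intfunct \equiv 0$ on $G$ for every multi-index $\aalpha$, and by uniqueness of the Fourier expansion of Eichler-invariant functions on $G$ this forces each Fourier coefficient $c_\lambda(g)$ in~\eqref{eq;Iaalpha} to vanish identically in $g \in G$.

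For parts (i) and (ii), the goal is to extract from this global vanishing the vanishing of all Fourier coefficients of $f$. The plan is to proceed by induction on the divisibility of $\lambda \in M'$: for primitive $\lambda$ only the term $t = 1$ contributes to~\eqref{eq;Iaalpha}, while for non-primitive $\lambda$ the other terms involve coefficients $c(f_\lambdatwo, q(\lambda)/t^2)$ at lattice vectors $\lambda/t$ of strictly lower divisibility, which vanish by the inductive hypothesis. For primitive $\lambda$ with $q(\lambda)>0$, the coefficient $c_\lambda(g)$ is a finite sum over $h^+ \in \{0,\ldots,q\}$ and $\lambdatwo \in L_0'/L$ with $\prol(\lambdatwo) = \lambda + M$, each term being the product of $c(f_\lambdatwo, q(\lambda))$ with a $y$-integral depending on $h^+$ and with the polynomial $\polw{\aalpha, \borw}{h^+}{0}\bigl(\borw(\lambda)\bigr)$ computed via Lemma~\ref{lemma;rewritborchimpldecourpol}. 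By varying $\aalpha$ and $g$ I would show that the induced system of linear functionals separates the $(h^+,\lambdatwo)$-indices, so that each $c(f_\lambdatwo, q(\lambda))$ must vanish individually. Since $\prol\colon L_0'/L \twoheadrightarrow M'/M$ is surjective, we conclude $c(f_\borgamma, q(\lambda)) = 0$ for every $\borgamma \in M'/M$ and every $\lambda \in M + \borgamma$ with $q(\lambda) > 0$. Finally, I would invoke Bruinier's newform theory in the version compatible with the splitting $L = K \oplus U(N) \oplus U$ of part~(i), respectively with the local-at-every-prime splitting assumption of part~(ii), to promote this partial vanishing to the vanishing of all Fourier coefficients of $f$, whence $f = 0$.

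For part (iii), with $p = 1$ the Kudla--Millson Schwartz function $\varphi_{\text{\rm KM}}$ itself vanishes identically on $V$. Indeed, one exhibits an element $k \in K$ (namely, the product of $-\mathrm{id}$ on the positive-definite line $\RR \basevec_1$ with a suitable reflection of the negative-definite base point $z_0$) which preserves both the Gaussian $\varphi_0$ and the polynomial $\Gpol_{(1,\ldots,1)}(\underline{x}) = (4\pi)^{-q/2} H_q(\sqrt{2\pi}\,x_1)$, yet acts by $-1$ on the line $\bigwedge^q \mathfrak{p}^*$. The $K$-invariance built into the definition of $\varphi_{\text{\rm KM}}$ then forces $\varphi_{\text{\rm KM}} = 0$, so the theta function and the lift vanish identically.

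The main obstacle is the decoupling argument for primitive $\lambda$ in the second paragraph: the $y$-integrals appearing in the Fourier coefficients are structurally similar across distinct pairs $(h^+,\lambdatwo)$, and to conclude linear independence of the associated functionals one has to combine asymptotic analysis (for instance as $|\genU_{z^\perp}| \to 0$ or $\lambda_{w^\perp}^2 \to \infty$), the explicit polynomial formula of Lemma~\ref{lemma;rewritborchimpldecourpol}, and the full flexibility of varying both $\aalpha$ and the isometry $g \in G$. Once this linear independence is secured, the induction on divisibility and the subsequent appeal to Bruinier's newform results proceed in a standard manner.
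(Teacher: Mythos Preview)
For parts~(i) and~(ii) your architecture matches the paper's, but you over-complicate the setup and leave the decisive step unresolved. Under the hypothesis $L = M \oplus U$ with $u,u'$ the standard hyperbolic generators one has $N=1$, so $\prol\colon L'/L \to M'/M$ is a bijection and there is \emph{no} inner sum over $\lambdatwo$ to decouple; the Fourier coefficient $c_\lambda(g)$ for primitive $\lambda$ is simply $c(f_\borgamma, q(\lambda))$ times a sum over $h^+$. Your ``main obstacle'' is therefore not a linear-independence problem across $(h^+,\lambdatwo)$ but the single question of whether that $h^+$-sum is nonzero for some choice of $(\aalpha, g)$. The paper settles this by a direct construction (Lemma~\ref{lemma;technnonzeropolinprinj}) rather than asymptotics: take $\aalpha = (\alpha_1, \ldots, \alpha_1)$ with $\alpha_1 \le p-1$ and choose $g \in K$ swapping $e_{\alpha_1}$ with $e_p$ (and negating one negative basis vector to stay in $\SO$). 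With the basis~\eqref{eq;choicebasis1} this gives $\pol_{\aalpha}(g(v)) = 2^{q/2} x_p^q = 2^q (v, u_{z_0^\perp})^q$, so $\polw{\aalpha,\borw}{h^+}{0}$ vanishes for $h^+ < q$ and equals the constant $2^q$ at $h^+ = q$. The $h^+$-sum collapses to a single term whose $y$-integral is strictly positive, and the induction on divisibility goes through immediately. This is the key lemma you are missing; once it is in place, the appeal to Bruinier's newform theory (for~(i)) or the local splitting criterion (for~(ii)) finishes the proof exactly as you outline.

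Your approach to~(iii) is different from the paper's and essentially correct. The paper argues via the Fourier expansion: for $p=1$ the sublattice $M$ is negative definite, so every $\lambda \in M'$ has $q(\lambda) \le 0$, the non-constant terms of~\eqref{eq;Iaalpha} vanish because cusp forms have no non-positive-index coefficients, and the constant term vanishes since $\polw{\aalpha,\borw}{0}{0} = 0$ by Lemma~\ref{lemma;rewritborchimpldecourpol}. Your $K$-invariance argument is more direct and shows that $\varphi_{\text{\rm KM}}$ itself is zero. One small correction: for odd $q$ the element $k = (-1)\oplus R$ (with $R$ a reflection of $z_0$) acts by $-1$ on the Schwartz factor via $H_q(-t) = (-1)^q H_q(t)$ and by $+1$ on $\bigwedge^q \mathfrak{p}^*$, not the other way around; but the product is $-1$ in either parity, so your conclusion stands.
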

	
	The following theorem, used by Bruinier to derive a converse theorem for Borcherds' products, is a direct consequence.
	
	\begin{cor}[{\cite[Theorem~$5.3$]{bruinier;converse}}]\label{cor;covered0}
		Assume that $L \cong K \oplus U(N) \oplus U$ for some even positive definite lattice $K$.
		Then the map $\KMliftbase\colon S^k_L \to \mathcal{Z}^2(X)$ is injective.
	\end{cor}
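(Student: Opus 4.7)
The plan is to obtain this corollary as an immediate specialization of Theorem~\ref{thm;injindeg1}\ref{thm;injindeg1;pq>1}. The target $\mathcal{Z}^2(X)$ forces $q = 2$, so $L$ has signature $(p,2)$. Since $U(N) \oplus U$ already contributes signature $(2,2)$, the positive definite lattice $K$ must have signature $(p-2,0)$, which is in particular even; thus the hypothesis of Theorem~\ref{thm;injindeg1}\ref{thm;injindeg1;pq>1} is satisfied verbatim, with the role of the sublattice denoted there by $K$ played by the positive definite lattice appearing here. Applying that theorem yields the injectivity of $\KMliftbase \colon S^k_L \to \mathcal{Z}^2(X)$.

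For transparency, the substantive content in this Hermitian case proceeds along the chain already sketched in the overview. The defining integrals $\intfunct(g)$ of $\KMliftbase(f)$ admit the Fourier expansion given by Theorem~\ref{thm;introFourExp}; inspection of the coefficient $c_\lambda(g)$ for $\lambda \in M'$ of positive norm, combined with an induction on the divisibility $t \mid \lambda$, forces the cusp form Fourier coefficients $c(f_\borgamma, q(\lambda))$ to vanish for every $\borgamma \in M'/M$ and every $\lambda \in M + \borgamma$ with $q(\lambda) > 0$, where $M = K \oplus U(N)$. Bruinier's newform theory for vector-valued modular forms then propagates this partial vanishing across $M'/M \subset L'/L$ to all Fourier coefficients of $f$.

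The two split hyperbolic planes play distinct roles in this programme: the unscaled $U$ is what drives the unfolding trick producing Theorem~\ref{thm;introFourExp}, while $U(N)$ supplies the second isotropic direction that the divisibility induction needs to reach every index relevant to the newform argument. The main obstacle — propagating the vanishing through the weighted integrals involving $\exp(-\Delta/8\pi y)(\polw{\aalpha,\borw}{h^+}{0})$ and the Bessel-type exponentials $\exp(-2\pi y \lambda_{w^\perp}^2/t^2 - \pi t^2/2y\genU_{z^\perp}^2)$ of Theorem~\ref{thm;introFourExp} — is precisely what the proof of Theorem~\ref{thm;injindeg1} handles; nothing further is required for the present corollary.
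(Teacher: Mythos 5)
Your proposal is correct and matches the paper, which presents this corollary precisely as an immediate specialization of Theorem~\ref{thm;injindeg1}\ref{thm;injindeg1;pq>1} (with $q=2$ and the even lattice there being your positive definite $K$). One small descriptive slip in your transparency paragraph: the scaled plane $U(N)$ is not what the divisibility induction uses (that only needs the unscaled $U$ driving the unfolding); rather, $U(N)$ enters through Bruinier's newform theory, which passes from the vanishing of coefficients indexed by $M'/M$ to all Fourier coefficients of $f$.
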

	
	Note that the assumption of a hyperbolic split is in general necessary to obtain an injectivity result.
	In fact, Bruinier provides examples of nontrivial cusp forms in the kernel of~$\KMliftbase$ otherwise (cf. \cite[Sec 6.1]{bruinier;converse}).

	Whenever~$L$ splits off a hyperbolic plane, i.e.~$L=M\oplus U$ for some sublattice~$M$ orthogonal to~$U$, we choose the isotropic vectors~$\genU$ and~$\genUU$ introduced in Section~\ref{sec;follBorcherds} to be the standard generators of~$U$.
	Moreover, we may assume without loss of generality that the orthogonal basis~$(\basevec_j)_j$ of~$V$ is such that
	\ba\label{eq;choicebasis1}
	\basevec_1,\dots,\basevec_{p-1},\basevec_{p+1},\dots,\basevec_{p+q-1}\quad\text{is a basis of~$M\otimes\RR$},
	\\
	\genU=\frac{\basevec_p + \basevec_{p+q}}{\sqrt{2}}\quad\text{and}\quad
	\genUU=\frac{\basevec_p - \basevec_{p+q}}{\sqrt{2}}.\qquad\quad
	\ea
	These assumptions will simplify some of the formulas provided in the previous sections.
	
	To prove Theorem~\ref{thm;injindeg1}, we need the following technical result.
	\begin{lemma}\label{lemma;technnonzeropolinprinj}
	Let $L$ be an even indefinite lattice of signature~$(p,q)$ with~$p>1$ that splits off a hyperbolic plane, i.e.~$L=\brK\oplus U$ for some sublattice~$\brK$.
	We choose~$\genU$ and~$\genUU$ as in~\eqref{eq;choicebasis1}.
	If~${\alpha_1\in\{1,\dots,p-1\}}$ and~$\aalpha=(\alpha_1,\dots,\alpha_1)$, then there exists~$g\in G$ such that
	\be\label{eq;technnonzeropolinprinj}
	\exp(-\Delta/8\pi y)\Big(
	\polw{\aalpha,\borw}{h^+}{0}
	\Big)
	\big(\borw(\lambda)\big)
	=
	\begin{cases}
	2^q, & \text{if~$h^+=q$,}\\
	0, & \text{otherwise,}
	\end{cases}
	\ee
	for every~$\lambda\in\brK\otimes\RR$.
	\end{lemma}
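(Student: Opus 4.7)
The plan is to exhibit a single $g \in G$ for which the polynomial $\polw{\aalpha,\borw}{h^+}{0}\big(\borw(\lambda)\big)$ is either identically equal to the constant $2^q$ (when $h^+=q$) or identically zero (when $h^+<q$) as a function of $\lambda \in \brK\otimes\RR$. Since $\Delta$ annihilates constants, the exponential $\exp(-\Delta/8\pi y)$ will then act trivially and deliver the claimed equality.

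First I would set $z=z_0$, so that $\genU_{z^\perp}=\basevec_p/\sqrt{2}$ and $\genU_{z^\perp}^2=1/2$. I would then choose $g\in K=\Stab_G(z_0)$ whose restriction to $z_0^\perp$ is a rotation in the $(\basevec_p,\basevec_{\alpha_1})$-plane sending $\basevec_p$ to $\basevec_{\alpha_1}$. Such an element exists because $\alpha_1\le p-1$ and $p\ge 2$, and one can absorb the determinant by an auxiliary reflection in $z_0$ if necessary to guarantee $g\in\SO(V)$. The key feature of this choice is that $g$ aligns $\genU_{z^\perp}$ with $\basevec_{\alpha_1}$, while simultaneously forcing $g^{-1}(\basevec_{\alpha_1})=\basevec_p$, a vector that is orthogonal to every element of~$\brK\otimes\RR$.

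Next I would apply Lemma~\ref{lemma;rewritborchimpldecourpol}. Since $\aalpha=(\alpha_1,\dots,\alpha_1)$, the associated counting vector $\aalphau$ has a single non-zero entry $q_{\alpha_1}=q$, so the sum over $k$ in~\eqref{eq;closefforborpor} collapses to the single term $k_{\alpha_1}=h^+$, yielding
\[
\polw{\aalpha,\borw}{h^+}{0}\big(\borw(\lambda)\big)
=\frac{2^{q/2}}{\genU_{z^\perp}^{2h^+}}\binom{q}{h^+}\big(g(\genU),\basevec_{\alpha_1}\big)^{h^+}\big(\borw(\lambda),\basevec_{\alpha_1}\big)^{q-h^+}.
\]
Two direct computations with the chosen $g$ then finish everything: the identity $\big(g(\genU),\basevec_{\alpha_1}\big)=\big(g(\genU_{z^\perp}),\basevec_{\alpha_1}\big)=1/\sqrt{2}$, using that $g(\genU_z)\in z_0$ is orthogonal to $\basevec_{\alpha_1}\in z_0^\perp$; and the vanishing $\big(\borw(\lambda),\basevec_{\alpha_1}\big)=\big(\lambda_{w^\perp}+\lambda_w,\basevec_p\big)=0$, which holds because $\lambda_{w^\perp}\in\Span(\basevec_1,\dots,\basevec_{p-1})$ and $\lambda_w\in\Span(\basevec_{p+1},\dots,\basevec_{p+q-1})$.

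The vanishing in the last identity immediately gives that $\polw{\aalpha,\borw}{h^+}{0}\big(\borw(\lambda)\big)=0$ for every $h^+<q$, while for $h^+=q$ the remaining factor is $\big(2^{q/2}/(1/2)^q\big)\cdot(1/\sqrt{2})^q=2^q$, which is a constant in $\lambda$. Applying $\exp(-\Delta/8\pi y)$ preserves both zero and constants, which concludes the argument. There is no genuine analytical obstacle here; the whole content lies in the careful choice of $g$ that simultaneously aligns $g(\genU_{z^\perp})$ with $\basevec_{\alpha_1}$ and sends $\basevec_{\alpha_1}$ back to $\basevec_p$, the latter being orthogonal to $\brK\otimes\RR$ in the chosen basis.
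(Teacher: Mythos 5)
Your proposal is correct and follows essentially the same route as the paper: both arguments pick an element of the stabilizer $K$ of $z_0$ that carries the $\basevec_{\alpha_1}$-direction to the $\basevec_p$-direction of $\genU_{z_0^\perp}$ (you use a rotation, the paper a transposition compensated by a sign flip on $\basevec_{p+1}$ to stay in $\SO(V)$), so that the decomposition of $\pol_{\aalpha}\circ g$ collapses to the single term $h^+=q$ with constant component $2^q$. Your use of the closed formula of Lemma~\ref{lemma;rewritborchimpldecourpol} instead of matching directly against the decomposition of Remark~\ref{ref;howtorewrourpolgtilde} is exactly the alternative the paper itself mentions, and your evaluation of the constants and of the vanishing $(\borw(\lambda),\basevec_{\alpha_1})=0$ is accurate.
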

	\begin{proof}[Proof of Lemma~\ref{lemma;technnonzeropolinprinj}]
	We recall from Definition~\ref{def;QgenandPgen} that, under the assumptions in~\eqref{eq;choicebasis1}, for every vector $\genvec=\sum_{j=1}^{p+q}x_j\basevec_j$ in~$L\otimes\RR$ the polynomial $\pol_{\aalpha}$ is such that
	\be\label{eq;polabinspeccase}
	\pol_{\aalpha}(\genvec)=2^{q/2} x_{\alpha_1}^q.
	\ee
	We define~$g\in G$ to be the isometry interchanging~$\basevec_{\alpha_1}$ with~$\basevec_p$, mapping~$\basevec_{p+1}$ to~$-\basevec_{p+1}$, and fixing the remaining standard basis vectors.
	We remark that $g$ is an element of the stabilizer~$K$ of the base point $z_0\in \Gr(V)$.
	For this choice of~$g$, we deduce that~${\pol_{\aalpha}\big(g(\genvec)\big)=2^{q/2} x_p^q}$.

	We write~$\pol_{\aalpha}$ as in Remark~\ref{ref;howtorewrourpolgtilde}, for some homogeneous polynomials~${\polw{\aalpha,\borw}{h^+}{0}}$ of degree respectively $(q-h^+,0)$ on the vector spaces $\borw(L\otimes\RR)$.
	The choices~\eqref{eq;choicebasis1} imply that
	\bes
	(\genvec,u_{z_0^\perp})=\sum_{j=1}^{p+q}x_j(\basevec_j,\basevec_p)/\sqrt{2}=x_p/\sqrt{2},
	\ees
	hence, we deduce that
	\be\label{eq;prooftecxbeta}
	\pol_{\aalpha}\big(g(\genvec)\big)=(\genvec,u_{z_0^\perp})^q\cdot 2^{q}.
	\ee
	If we compare~\eqref{eq;prooftecxbeta} with the formula provided by Remark~\ref{ref;howtorewrourpolgtilde}, or directly using Lemma~\ref{lemma;rewritborchimpldecourpol}, we see that for this special choice of $g$ we have
	\bes
	\polw{\aalpha,\borw}{h^+}{0}\big(\borw(\genvec)\big)=\begin{cases}
	2^q, & \text{if $h^+=q$,}\\
	0, & \text{otherwise.}
	\end{cases}
	\ees
	Since~$\polw{\aalpha,\borw}{h^+}{0}$ is constant (hence harmonic) for every~$h^+$, we immediately deduce~\eqref{eq;technnonzeropolinprinj}.
	\end{proof}
	
	We are now ready to prove the main result of this section.
	
	\begin{proof}[Proof of Theorem~\ref{thm;injindeg1}]
	We begin with the cases~\ref{thm;injindeg1;pq>1} and~\ref{thm;injindeg1;q=1}, where~$L \cong M \oplus U$ splits off a hyperbolic plane~$U$.
	Recall that without loss of generality we may choose~$\genU$ and~$\genUU$ as in~\eqref{eq;choicebasis1}.
	Let~$f\in S^k_{1,L}$ be such that~$\KMliftbase(f)$ vanishes.
	We will prove that this implies $f=0$.	
	Recall that we may compute $\KMliftbase(f)$ as 
	\be\label{eq;injrec}
	\KMliftbase(f)=\sum_{\aalpha}\Big(
	\int_{\SL_2(\ZZ)\backslash\HH}y^{k} \langle f(\tau), \Theta_L(\tau,g,\pol_{\aalpha}) \rangle \frac{dx\,dy}{y^2}
	\Big) \cdot g^*(\omega_{\aalpha}),
	\ee
	for every $z\in\Gr(V)$, and every $g\in G$ such that $g$ maps $z$ to $z_0$; see~\eqref{eq;KMlambdaaO}.
	Since the elements~$\omega_{\aalpha}$ are linearly independent in~${\bigwedge}^q(\mathfrak{p})^*$, we deduce from~\eqref{eq;injrec} that~${\KMliftbase(f)=0}$ if and only if all defining integrals of the Kudla--Millson lift are zero, that is
	\be\label{eq;injreccoeffab}
	\int_{\SL_2(\ZZ)\backslash\HH}y^{k}\langle f(\tau),\Theta_L(\tau,g,\pol_{\aalpha})\rangle\frac{dx\,dy}{y^2}=0,\qquad\text{for every $\aalpha$ and $g\in G$.}
	\ee

	As a complex-valued function on~$G$, the defining integral~\eqref{eq;injreccoeffab} of the Kudla--Millson lift admits a Fourier expansion.
	By Theorem~\ref{thm;Fourexpofcoef}, this is
	\ba\label{eq;Fexpfininproofinj}
	\int_{\SL_2(\ZZ)\backslash\HH}&\frac{y^{k}}{\sqrt{2} \lvert \genU_{z^\perp} \rvert} \langle F_{\brK}(\tau; 0,0) , \Theta_{\brK}(\tau,\borw,\polw{\aalpha,\borw}{0}{0}) \rangle \frac{dx\,dy}{y^2} \\
		&+
		\sum_{\borgamma \in \brK'/\brK} \sum_{\lambda \in \brK + \borgamma} \frac{\sqrt{2}}{\lvert \genU_{z^\perp} \rvert}\sum_{h^+=0}^{q} 
		\sum_{\substack{t\ge 1 \\t\mid\lambda}} \left( \frac{t}{2i} \right)^{h^+} c(f_{\borgamma/t},q(\lambda)/t^2)
		\\
		&\times \int_0^{+\infty} \hspace{-.2em} y^{(p-5)/2+q-h^+} \exp\bigg(-\frac{2\pi y \lambda_{w^\perp}^2}{t^2} - \frac{\pi t^2}{2y\genU_{z^\perp}^2} \bigg) 
		\\ 
		&\times \exp(-\Delta/8\pi y)\big(\polw{\aalpha,\borw}{h^+}{0}\big)\big(\borw(\lambda/t)\big)dy \cdot e\big((\lambda,\mu)\big). 
	\ea
	The first summand in~\eqref{eq;Fexpfininproofinj} is the constant term of the Fourier expansion.
	We deduce from~\eqref{eq;injreccoeffab} that the Fourier coefficients of the Fourier expansion~\eqref{eq;Fexpfininproofinj} are all zero.	
	We use this to show that $c_n(f_\borgamma)=0$ for every~$\borgamma\in\brK'/\brK$ and every~$n\in\ZZ + q(\borgamma)$, that is, the cusp form $f$ is zero.
	
	We work by induction on the divisibility of all $\lambda\in\brK+\borgamma$ such that $q(\lambda)>0$, for every~$\borgamma \in \brK'/\brK$.
	Suppose that such a~$\lambda$ is \emph{primitive}, that is, the only integer $t\ge1$ dividing~$\lambda$ is~$t=1$.
	The fact that the Fourier coefficient of~\eqref{eq;Fexpfininproofinj} associated to $\lambda$ equals zero is equivalent to
	\ba\label{eq;injreccorFcoefffc}
	\frac{\sqrt{2} c(f_{\borgamma},q(\lambda))}{\lvert \genU_{z^\perp} \rvert}
	&
	\sum_{h^+=0}^q (2i)^{-h^+}\int_0^{+\infty} y^{(p-5)/2+q-h^+}\cdot
	\exp\bigg(-2\pi y \lambda_{w^\perp}^2 -
	\frac{\pi}{2yu_{z^\perp}^2}
	\bigg)\\
	&\times
	\exp(-\Delta/8\pi y)\big(\polw{\aalpha,\borw}{h^+}{0}\big)\big(\borw(\lambda)\big)
	dy=0,
	\ea
	for every~$\aalpha$ and~$g\in G$.
	Let~${\alpha_1\in\{1,\dots,p-1\}}$.
	By Lemma~\ref{lemma;technnonzeropolinprinj}, there exists $g\in G$ such that for the choice~$\aalpha=(\alpha_1,\dots,\alpha_1)$ we may simplify the sum over~$h^+$ on the left-hand side of~\eqref{eq;injreccorFcoefffc} and deduce that
	\be\label{eq;simplifwithtechlemma}
	c(f_{\borgamma},q(\lambda))\frac{i^{-q}\sqrt{2}}{\lvert \genU_{z^\perp} \rvert}\int_0^{+\infty} y^{(p-5)/2}\cdot
	\exp\Big(-2\pi y \lambda_{w^\perp}^2 -
	\frac{\pi}{2yu_{z^\perp}^2}
	\Big)
	dy=0.
	\ee
	Since the integral appearing in~\eqref{eq;simplifwithtechlemma} is \emph{strictly positive}, we deduce that~$c(f_{\borgamma},q(\lambda))=0$, which concludes the first step of the induction.
	
	Suppose next that $c(f_{\borgamma},q(\lambda'))=0$ for every~$\borgamma\in\brK'/\brK$ and every~$\lambda'\in\brK+\borgamma$ divisible by at most~$s$ positive integers.
	Let $\lambda\in\brK+\borgamma$ be such that it is divisible by~$s+1$ integers~${1<d_1<\dots <d_s}$.
	Since~${c(f_{\borgamma/d_j},q(\lambda/d_j))=0}$ for every~$j=1,\dots,s$ by induction, we may simplify the formula of the Fourier coefficient associated to $\lambda$ of the Fourier expansion~\eqref{eq;Fexpfininproofinj} again to~\eqref{eq;injreccorFcoefffc}, where this time $\lambda$ is non-primitive.
	Since the primitivity of~$\lambda$ is irrelevant in Lemma~\ref{lemma;technnonzeropolinprinj}, we may deduce~$c(f_{\borgamma},q(\lambda))=0$ with the same procedure used for the case of primitive~$\lambda$.
	Summarizing, we proved that if~$\KMliftbase(f)=0$, then
	\be\label{eq;coefvanbtna}
	c(f_{\borgamma},q(\lambda))=0\qquad\text{for every~$\borgamma\in\brK'/\brK$ and every~$\lambda\in\brK + \borgamma$.}
	\ee	
	
	Since~$\brK'/\brK\cong L'/L$, to conclude the proof it is enough to show that for every~${\xi\in L+\borgamma}$, there exists~${\lambda\in\brK+\borgamma}$ such that~$q(\xi)=q(\lambda)$.
	
	Under the assumptions of~\ref{thm;injindeg1;pq>1}, hence if we assume that~$\brK$ splits off~$U(N)$, then that can be shown using the newform theory introduced in~\cite[Section~$3.1$]{bruinier;converse} as in the last part, i.e. Part 3, of the proof of~\cite[Theorem~$5.3$]{bruinier;converse}.
	In fact, the newform theory and the last part of the cited theorem do not depend on the signature of~$L$.
	
	Under the assumptions of~\ref{thm;injindeg1;q=1} it is enough to apply~\cite[Corollary~$2.7$]{kieferzuffetti} with~$r=1$. 
	
	We conclude the proof of Theorem~\ref{thm;injindeg1} with~\ref{thm;injindeg1;p=1}.
	If~$f\in S^k_{1,L}$, then the lift~$\KMliftbase(f)$ admits a Fourier expansion as provided in Theorem~\ref{thm;Fourexpofcoef}.
	The sublattice~$\brK$ is \emph{negative definite}, in fact, its signature is~$(0,q-1)$.
	This implies that every~$\lambda\in\brK'$ has non-positive norm.
	By Lemma~\ref{lemma;rewritborchimpldecourpol} the polynomial~$\polw{\aalpha,\borw}{0}{0}$ is zero for every~$g\in G$, hence the constant coefficient of the Fourier expansion~\eqref{eq;Iaalpha} vanishes.
	Since for every~$\borgamma\in L'/L$ the Fourier coefficients of~$f_\borgamma$ associated to non-positive rational numbers are zero, we deduce that the Fourier coefficients of~\eqref{eq;Iaalpha} are all zero.
	Equivalently, the lift~$\KMliftbase(f)$ vanishes.
	\end{proof}
	
	\section{The Funke--Millson twist of the Kudla--Millson lift}\label{sec;caseofsymmpowers}
	In~\cite{fm;cycleswith} Funke and Millson constructed twists of the Kudla--Millson Schwartz function with respect to symmetric algebras.
	These are of interest for the computation of cohomology groups with local coefficients on locally symmetric spaces.
	The theta lifts induced by such twisted Schwartz functions have been proved to be injective by Bruinier and Funke~\cite{brfu} and Stein~\cite{stein} under strong hypothesis on the lattice~$L$.
	In this section we illustrate how to generalize such results following the ideas of Section~\ref{sec;injKMgenus1}.
	\\
	
	Let~$L$ be an even indefinite lattice of signature~$(p,q)$, and let~$V = L \otimes \RR$.
	We fix a non-negative integer~$\ell$.
	Funke and Millson~\cite{fm;cycleswith} introduced a twist of the Kudla--Millson Schwartz function~$\varphi_{\text{\rm KM}} $ with respect to the~$\ell$-th symmetric power of the space~$V$.
	
	Recall that the symmetric algebra~$\Sym(V)$ is the quotient of the tensor algebra~$T(V)$ by the two-sided ideal generated by the vectors of the form~$v\otimes w - w\otimes v$, where~$v,w\in V$.
	Let~$(\basevec_j)_j$ be an orthogonal basis of~$V$ chosen as in Section~\ref{sec;compDp2}.
	The tensor algebra $T(V)$ is generated by tensor products of the basis vectors~$\basevec_j$. 
	We denote by~$T^\ell(V)$ the subspace of~$T(V)$ given by tensors of degree~$\ell$, i.e.\ the subspace generated by tensor products of exactly~$\ell$ of the basis vectors~$(\basevec_j)_j$. 
	Its image in the quotient algebra~$\Sym(V)$ is the~$\ell$-th symmetric power of~$V$ and is denoted by~$\Sym^{\ell}(V)$. 
	
	\subsection{Twists of the Kudla--Millson Schwartz function}
	
	We recall here the construction of the twisted Schwartz form~$\varphi_{q,\ell}$ introduced by Funke and Millson~\cite{fm;cycleswith}.
	\\
	
	The form~$\varphi_{q,\ell}$ is an element of~$\big[\mathcal{S}(V)\otimes \mathcal{A}^q(\hermdom)\otimes\Sym^\ell(V)\big]^G$.
	Here the action of~$G$ on~$\Sym^\ell(V)$ is the one induced by the standard action of~$G$ on~$T(V)$.
	As for the case of~$\ell=0$ considered in Section~\ref{sec;compDp2}, we construct~$\varphi_{q,\ell}$ as a~$K$-invariant element of~${\mathcal{S}(V)\otimes{\bigwedge}^q(\mathfrak{p}^*)\otimes\Sym^\ell(V)}$ and then spread it to the whole space~$\hermdom$. 
	The form~$\varphi_{q,\ell}$ is defined by applying~$\ell$ times the operator
	\[
	\frac{1}{2}\sum_{\indextwo=1}^p\bigg(x_\indextwo-\frac{1}{2\pi}\frac{\partial}{\partial x_\indextwo}\bigg) \otimes 1 \otimes A_{\basevec_{\indextwo}}
	\]
	to the~$K$-invariant Kudla--Millson Schwartz function~$\varphi_{\textrm{KM}}$.
	Here~$A_{\basevec_{\indextwo}}$ denotes the multiplication by~$\basevec_{\indextwo}$ in~$\Sym(V)$.
	
	We now rewrite~$\varphi_{q,\ell}$ explicitly.
	In what follows, we denote by~$\aalpha$ and~$\bbeta$ tuples with values in~$\{1, \hdots, p\}$ and of length~$q$ and~$\ell$, respectively.
	To simplify the notation, we define
	\be\label{eq;newobjts}
	\omega_{\aalpha} \coloneqq \bigwedge_{j= 1}^{q} \omega_{\alpha_j, j}\in{\bigwedge}^q (\mathfrak{p}^*)
	\qquad
	\text{and}
	\qquad
	e_{\bbeta} \coloneqq \prod_{j = 1}^{\ell} e_{\beta_j} \in \Sym^{\ell}(V).
	\ee
	By inserting~$\varphi_{\textrm{KM}}$ as computed in~\eqref{eq;KMschwrew}, we may deduce that
	\begin{align*}
	\varphi_{q,\ell}
	&=
	\frac{1}{2^\ell}
	\sum_{\bbeta}
	\bigg(
	\prod_{j=1}^\ell\bigg(
	x_{\indextwo_j} - \frac{1}{2\pi}\frac{\partial}{\partial x_{\indextwo_j}}
	\bigg)
	\otimes 1 \otimes \prod_{j=1}^\ell A{\basevec_{\indextwo_j}}
	\bigg)
	\varphi_{\textrm{KM}}
	\\
	&=\frac{1}{2^{\ell+q/2}}
	\sum_{\aalpha,\bbeta}
	\prod_{j=1}^\ell
	\prod_{\mu=1}^q
	\bigg(
	x_{\indextwo_j} - \frac{1}{2\pi} \frac{\partial}{\partial x_{\indextwo_j}}
	\bigg)
	\bigg(
	x_{\alpha_{\mu}} - \frac{1}{2\pi}\frac{\partial}{\partial x_{\alpha_{\mu}}}
	\bigg)\varphi_0
	\otimes\omega_{\aalpha}
	\otimes\basevec_{\bbeta}.
	\end{align*}

	Next, we rearrange the sums on the right-hand side of the formula above in order to obtain a sum of linearly independent vectors. 
	The differential operator therein depends solely on the count of values attained by the entries of~$\aalpha$ and $\bbeta$; see Remark~\ref{rem;KMsFunctionpart}.
	Analogously to what we did in Section~$2$ for~$\aalpha$, we associate a counting tuple~$\psi(\bbeta) \coloneqq \overline{\beta} = (\ell_1, \hdots, \ell_p)$
	also to~$\bbeta$, where~$\ell_j$ is the number of times~$j$ appears as an entry of the tuple~$\bbeta$.
	
	We remark that for~$\bbetau$ given, every~$\bbeta \in \psi^{-1}(\bbetau)$ gives rise to the same~$\basevec_{\bbeta} \in \Sym^{\ell}(V)$. 
	This prompts us to modify the vectors defined in~\eqref{eq;newobjts} as
	\[
	\omega^{\overline{\alpha}} 
	\coloneqq \sum_{ \aalpha \in \psi^{-1}(\aalphau) } \omega_{\aalpha} 
	\in{\bigwedge}^{q}(\mathfrak{p}^*)
	\qquad
	\text{and}
	\qquad
	e^{\overline{\beta}} 
	\coloneqq \sum_{\bbeta \in \psi^{-1}(\bbetau)} e_{\bbeta} 
	= \frac{\ell!}{\prod_{j} \ell_j!} \cdot \prod_{j = 1}^{\ell} e_j^{\ell_j} \in \Sym^{\ell}(V).
	\] 
	
	Recall from Definition \ref{def;QgenandPgen} that 
	\be\label{eq;Qbbetau}
	\Gpol^{\bbetau}(\underline{x}) 
	= \frac{1}{(4\pi)^{q/2}}\prod_{j=1}^{p} H_{\ell_j}\big(\sqrt{2\pi}x_{j}\big) 
	\qquad\text{and}\qquad
	\pol^{\bbetau}(\underline{x})
	= 
	2^{q/2}\prod_{j=1}^{p} x_{j}^{\ell_j}.
	\ee
	We interpret~$\aalphau$ and~$\bbetau$ as elements of~$l^1(\{1, \hdots, p\})$ 
	so that we may add them and rewrite~$\varphi_{q,\ell}$ by~\eqref{eq;melpanh} as
\begin{align*}
	\varphi_{q,\ell}
	&=\frac{1}{2^{\ell/2}}
	\sum_{\substack{\overline{\alpha} \\ \|\aalphau\|_1 = q}} 
	\sum_{\substack{\overline{\beta} \\ \|\bbetau\|_1 = \ell}}
	\Gpol^{\overline{\alpha}+\overline{\beta}}\varphi_0
	\otimes
	\omega^{\overline{\alpha}}
	\otimes
	\basevec^{\overline{\beta}}.
\end{align*}
	The vectors~$\omega^{\aalphau} \otimes e^{\bbetau}$ are linearly independent.
	We group these such that the parameters $\aalphau$ and~$\bbetau$ add up to the same $p$-tuple $\ggammau$ of non negative integers with $\|\ggammau\|_1 = q + \ell$.
	We also include the factor $2^{-\ell/2}$ into these vectors, so that we obtain new linearly independent vectors~${\mathfrak{b}^{\overline{\gamma}} \in {\bigwedge}^{q}(\mathfrak{p}^*) \otimes \Sym^{\ell}(V)}$.
	With this notation, we have 
	\begin{align}\label{eq;varphiqlfinal}
	\varphi_{q,\ell}
	&=
	\sum_{\substack{\overline{\gamma} \\  \|\overline{\gamma}\|_1 = q + \ell}} 
	\Gpol^{\overline{\gamma}}\varphi_0 
	\otimes
	\mathfrak{b}^{\overline{\gamma}}.
	\end{align}

	\subsection{The injectivity of the lift}
	
	We replace the weight~$k=(p+q)/2$ considered in Section~\ref{sec;injKMgenus1} by the shifted weight~$\kappa \coloneqq k + \ell$.
	We may then realize the theta function associated to~$\varphi_{q,\ell}$ as	
	\ba\label{eq;ThetaKMql}
	\Theta(\tau,z,\varphi_{q,\ell}) 
	&=
	y^{-\kappa/2} \sum_{\borgamma \in L'/L} \sum_{\lambda\in L + \borgamma}\Big(\wrep(g_\tau)\varphi_{q,\ell}\Big)(\lambda,z)
	\mathfrak{e}_\borgamma
	\\
	&= 
	\sum_{\substack{\overline{\gamma} \\ \|\ggammau\|_1 = q + \ell}}  \underbrace{y^{-\kappa/2} \sum_{\borgamma \in L'/L} \sum_{\lambda\in L + \borgamma} \Big(\wrep(g_\tau)(\Gpol^{\overline{\gamma}}\varphi_0)\Big)\big( g(\lambda)\big) \basee_\borgamma}_{\eqqcolon \FFg(\tau,g)}
	\otimes 
	\,
	g^*\big( \mathfrak{b}^{\overline{\gamma}} \big).
	\ea
	If~$\ell=0$, then the previous expression is a rewriting of~\eqref{eq;KMdeg1recallexpl},  made by grouping the summands in a different way.
	The theta function~$\Theta(\tau,z,\varphi_{q,\ell})$ behaves as a non-holomorphic modular form of weight~$\kappa$ with respect to~$\tau\in\HH$, and is a closed differential form on~$\hermdom$ with respect to the variable~$z$.
	We remark that the auxiliary function~$\FFg(\tau,g)$ appearing on the right-hand side of~\eqref{eq;ThetaKMql} equals~$\Theta_L(\tau, g, \pol^{\overline{\gamma}})$. This may be verified as in the proof of  Proposition~\ref{lemma;FabwithBorch}. 
	We deduce that
	\be\label{eq;feqtwisttheta}
	\Theta(\tau,z,\varphi_{q,\ell})
	=
	\sum_{\substack{\overline{\gamma} \\ \|\ggammau\|_1 = q + \ell}}
	\Theta_L(\tau, g, \pol^{\overline{\gamma}})
	\otimes
	g^*\big( \mathfrak{b}^{\overline{\gamma}} \big).
	\ee
	
	Let~$\locsys{\ell}{V}$ be the local system on~$\hermdom$ associated to~$\Sym^\ell(V)$.
	We denote with the same symbol also its push-forward sheaf on~$X$ under the quotient map.
	We denote by~$\mathcal{Z}^q(X, \locsys{\ell}{V})$ the space of~$\locsys{\ell}{V}$-valued closed~$q$-forms on~$\hermdom$.

	\begin{defi}
	The Kudla--Millson--Funke lift~$\KMliftbasel$ is the linear map
	\[
	\KMliftbasel \colon S_L^{\kappa} \longrightarrow \mathcal{Z}^q(X, \locsys{\ell}{V}),
	\qquad
	f\longmapsto \int_{\SL_2(\ZZ)\backslash\HH} y^\kappa \langle f(\tau) , \Theta(\tau,z,\varphi_{q,\ell}) \rangle \frac{dx\,dy}{y^2}.
	\]
	\end{defi}
	We remark that by~\eqref{eq;feqtwisttheta} it is possible to rewrite the lift~$\KMliftbasel(f)$ in terms of Borcherds' theta functions as
	\be\label{eq;tliftwithBtf}
	\KMliftbasel(f)
	=
	\sum_{\substack{\overline{\gamma} \\ \|\ggammau\|_1 = q + \ell}}
	\underbrace{\int_{\SL_2(\ZZ)\backslash\HH} y^\kappa \langle f(\tau) , \Theta_L(\tau, g, \pol^{\overline{\gamma}}) \rangle \frac{dx\,dy}{y^2}}_{\eqqcolon\intfunctg(g)}
	\cdot 
	\,
	g^*\big( \mathfrak{b}^{\overline{\gamma}} \big).
	\ee
	In analogy with Section~\ref{sec;theunfofKMgen1}, we call the auxiliary functions~$\intfunctg\colon G\to\CC$ appearing in~\eqref{eq;tliftwithBtf} the \emph{defining integrals} of the lift~$\KMliftbasel(f)$.
	By Theorem~\ref{thm;Fourexpofcoef}, the defining integrals admit a Fourier expansion of the form
	\ba\label{eq;Iggammau}
	\intfunctg(g) 
	=& 
	\int_{\SL_2(\ZZ)\backslash\HH}\frac{y^{\kappa}}{\sqrt{2} \lvert\genU_{z^\perp}\rvert} \langle F_{\brK}(\tau; 0,0) , \Theta_{\brK}(\tau,\borw,\polw{\borw}{0}{0}^{\ggammau}) \rangle \frac{dx\,dy}{y^2} \\
	&+\sum_{\lambda \in \borK'} \sum_{\substack{t \geq 1 \\ t \mid \lambda}} \frac{\sqrt{2}}{|\genU_{z^\perp}|} 
	\sum_{h^+} \left(\frac{t}{2i}\right)^{h^+} \sum_{\substack{\lambdatwo \in L_0'/L \\ \pi(\lambdatwo) = \lambda/t + M}}  e\Big(t(\lambdatwo,\genUU)\Big) c_{q(\lambda)/t^2}(f_{\lambdatwo})  \\
	& \times \int_{0}^{\infty} y^{q + \ell + (p - 5)/2 - h^+} 
	\exp\left(-\frac{2\pi y \lambda_{w^\perp}^2}{t^2} - \frac{\pi t^2}{2 y\genU_{z^\perp}^2 } \right) 
	\\
	& \times  \exp(-\Delta /8\pi y)(\polw{\borw}{h^+}{0}^{\ggammau})\big( \borw(\lambda/t)\big) dy \cdot e\Big( (\lambda,\mu) \Big). \\ 
	\ea
	It is easy to see that the proof of the injectivity of~$\KMliftbasel$, i.e.\ Theorem~\ref{thm;injindeg1}, works also for~$\KMliftbasel$. 
	\begin{thm}\label{thm;injindeg1_l}
		Let $L$ be an even indefinite lattice of signature~$(p,q)$. 
		\begin{enumerate}[label=(\roman*)]
			\item If~$L \cong K \oplus U(N) \oplus U$ for some even lattice~$K$ and some positive integer~$N$, then the lift~$\KMliftbasel$ is injective.
			\label{thm;injindeg1;pq>1_l}
			\item Let~$q=1$.
			If~$L \cong M \oplus U$ for some positive definite even lattice $M$ and~$M\otimes\ZZ_\primen$ splits off a hyperbolic plane for every prime~$\primen$, then the lift $\KMliftbasel$ is injective.
			\label{thm;injindeg1;q=1_l}
			\item Let~$p=1$.
			Then the lift~$\KMliftbasel$ associated to~$L$ is identically zero.
			\label{thm;injindeg1;p=1_l}
		\end{enumerate}
	\end{thm}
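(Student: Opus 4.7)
The plan is to mirror the proof of Theorem~\ref{thm;injindeg1}, whose three structural ingredients all remain available in the twisted setting: the decomposition~\eqref{eq;tliftwithBtf}, together with the linear independence of the vectors~$\mathfrak{b}^{\ggammau}\in{\bigwedge}^q(\mathfrak{p}^*)\otimes\Sym^\ell(V)$, reduces~$\KMliftbasel(f)=0$ to the vanishing of each defining integral~$\intfunctg(g)$ for every admissible counting tuple~$\ggammau$ with~$\|\ggammau\|_1=q+\ell$ and every~$g\in G$; via~\eqref{eq;Iggammau} this forces all the Fourier coefficients to vanish.

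The key step is the twisted analogue of Lemma~\ref{lemma;technnonzeropolinprinj}. For a fixed~$\alpha_1\in\{1,\dots,p-1\}$, take~$\ggammau$ equal to~$q+\ell$ at position~$\alpha_1$ and to zero elsewhere, so that~$\pol^{\ggammau}(\underline{x})$ is a non-zero constant multiple of~$x_{\alpha_1}^{q+\ell}$. For the same isometry~$g\in K$ employed in Lemma~\ref{lemma;technnonzeropolinprinj} (swapping~$\basevec_{\alpha_1}$ with~$\basevec_p$ and sending~$\basevec_{p+1}$ to~$-\basevec_{p+1}$), an identical computation yields~$\pol^{\ggammau}\bigl(g(v)\bigr)=c\cdot(v,\genU_{z_0^\perp})^{q+\ell}$ for a non-zero constant~$c$. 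Hence, in the decomposition of~$\pol^{\ggammau}$ analogous to~\eqref{eq;decpolborh-0}, one has $\polw{\borw}{h^+}{0}^{\ggammau}=0$ for every $h^+\neq q+\ell$ and a non-zero constant for $h^+=q+\ell$; the surviving polynomial is harmonic, so~$\exp(-\Delta/8\pi y)$ acts trivially on it.

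With this choice, the sum over~$h^+$ in~\eqref{eq;Iggammau} collapses to a single term, and the Fourier coefficient at a fixed~$\lambda\in\brK'$ with~$q(\lambda)>0$ reduces to a non-zero constant times a strictly positive $y$-integral times~$c(f_\borgamma,q(\lambda))$, plus induction terms of the form~$c(f_{\borgamma/t},q(\lambda)/t^2)$ with~$t>1$. The induction on the divisibility of~$\lambda$ used in the proof of Theorem~\ref{thm;injindeg1} then forces~$c(f_\borgamma,q(\lambda))=0$ for every~$\borgamma\in\brK'/\brK$ and every~$\lambda\in\brK+\borgamma$ of positive norm, and the conclusion~$f=0$ under~\ref{thm;injindeg1;pq>1_l} and~\ref{thm;injindeg1;q=1_l} follows from Bruinier's newform theory and from~\cite[Corollary~$2.7$]{kieferzuffetti}, respectively, exactly as in Theorem~\ref{thm;injindeg1}; both arguments depend only on the positive-norm coefficients of~$f$ and are insensitive to the weight shift from~$k$ to~$\kappa$. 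Part~\ref{thm;injindeg1;p=1_l} is handled verbatim as in the untwisted case: for negative definite~$\brK$ every~$\lambda\in\brK'$ satisfies~$q(\lambda)\le 0$, and~$\polw{\borw}{0}{0}^{\ggammau}$ vanishes identically by Lemma~\ref{lemma;rewritborchimpldecourpol}, so both the constant term and the remaining Fourier coefficients of~$\intfunctg$ vanish.

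I do not anticipate a substantive new obstacle; the twisting alters only the degree of the polynomial~$\pol^{\ggammau}$ (from~$q$ to~$q+\ell$) and shifts the weight from~$k$ to~$\kappa=k+\ell$, which in turn shifts the $y$-exponent in~\eqref{eq;Iggammau} by~$\ell$. Both changes are compatible with the convergence and induction arguments underlying Theorem~\ref{thm;injindeg1}, and the single point requiring care is the twisted version of Lemma~\ref{lemma;technnonzeropolinprinj}, which is essentially immediate from the definition of~$\pol^{\ggammau}$.
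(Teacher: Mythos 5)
Your proposal is correct and follows essentially the same route as the paper, which reduces to the vanishing of the defining integrals via the linear independence of the vectors $g^*\big(\mathfrak{b}^{\overline{\gamma}}\big)$ and then declares that the argument of Theorem~\ref{thm;injindeg1} carries over verbatim. The only difference is that you make explicit the twisted analogue of Lemma~\ref{lemma;technnonzeropolinprinj} (taking $\overline{\gamma}$ concentrated at a single index $\alpha_1\le p-1$, so that only the $h^+=q+\ell$ component survives), a step the paper's sketch leaves implicit; this is a faithful and valid filling-in rather than a different method.
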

	\begin{proof}[Sketch of the proof]
		Let~$f \in S_L^{\kappa}$ be a cusp form such that~$\KMliftbasel(f)=0$. 
		Since the vectors~$g^*\big( \mathfrak{b}^{\overline{\gamma}} \big)$ appearing on the right-hand side of~\eqref{eq;tliftwithBtf} are linearly independent for every~$g\in G$, we deduce that the defining integrals~$\intfunctg(g)$ vanish for every~$\overline{\gamma}$ and~$g$. 
		The proof now proceeds in the same way as in Theorem~\ref{thm;injindeg1}. 
	\end{proof}
	We conclude this section by showing that the injectivity results of Bruinier--Funke~\cite{brfu} and Stein~\cite{stein} are covered by our main result.
	Let~$r_0$ denote the Witt rank of~$L$.
	\begin{cor}[{\cite[Corollary~$4.12$]{brfu}}]\label{cor;covered1}
		Assume that $p + q > \max(4,3+r_0)$, $p>1$ and that~$q + \ell$ is even.
		If~$L$ is unimodular, then the lift $\KMliftbasel$ is injective.
	\end{cor}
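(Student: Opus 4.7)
The plan is to deduce the corollary from Theorem~\ref{thm;injindeg1_l} by exploiting the rigid structure of even unimodular indefinite lattices. Recall the classical theorem of Milnor (see e.g.~Serre, \emph{Cours d'arithmétique}, Ch.~V): an even unimodular lattice of signature $(p,q)$ with $p,q>0$ exists precisely when $p-q \equiv 0 \pmod 8$, in which case it is unique up to isomorphism and decomposes orthogonally as
\[
L \cong U^{\min(p,q)} \oplus E_8^{\pm|p-q|/8},
\]
where the sign of $E_8$ is chosen to give the correct signature. In particular $r_0 = \min(p,q)$. Since $p>1$ by hypothesis, case (iii) of Theorem~\ref{thm;injindeg1_l} never applies, so I split the argument according to whether $q\geq 2$ or $q=1$.

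In the case $q \geq 2$ the classification yields an orthogonal splitting $L \cong K \oplus U \oplus U$ with $K = U^{\min(p,q)-2} \oplus E_8^{\pm|p-q|/8}$, and Theorem~\ref{thm;injindeg1_l}(i) with $N=1$ gives the injectivity of $\KMliftbasel$ immediately. In the case $q = 1$, the hypothesis $p+q > 3+r_0 = 4$ forces $p \geq 4$; combined with $p-1 \equiv 0 \pmod 8$ this actually yields $p \geq 9$, and $L \cong U \oplus M$ with $M \cong E_8^{(p-1)/8}$ a positive definite even unimodular lattice of rank~$\geq 8$. To invoke Theorem~\ref{thm;injindeg1_l}(ii) it remains to verify that $M \otimes \ZZ_\primen$ splits off a hyperbolic plane over $\ZZ_\primen$ for every prime $\primen$. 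This is a local statement: for odd $\primen$, every even unimodular form of rank $2n$ over $\ZZ_\primen$ is isomorphic to $U^n$ (unimodular forms of rank $\geq 3$ over $\ZZ_\primen$ are isotropic, and isotropic unimodular summands split off a hyperbolic plane); for $\primen = 2$, the form $E_8 \otimes \ZZ_2$ is of plus type and isomorphic to $U^4$ (cf.~Conway--Sloane, \emph{Sphere Packings, Lattices and Groups}). In both cases $M \otimes \ZZ_\primen$ splits off $U$, and Theorem~\ref{thm;injindeg1_l}(ii) applies.

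The parity hypothesis $q + \ell$ even enters only through the non-triviality of $S_L^\kappa$: since $L$ is unimodular, $\rho_L$ factors through a character of $\Mp_2(\ZZ)$ determined by $p-q \pmod 8$, and $q + \ell$ even is precisely the compatibility condition needed for modular forms of weight $\kappa = (p+q)/2 + \ell$ with respect to $\rho_L$ to exist (otherwise the statement is vacuous). The only genuine obstacle is the verification of the $\primen$-adic splitting at $\primen = 2$ in the case $q = 1$, which requires identifying the $2$-adic type of $E_8$; everything else is a direct translation between the two sets of hypotheses via the Milnor classification.
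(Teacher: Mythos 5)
Your argument is correct, but it follows a genuinely different route from the paper's. The paper proves this corollary together with Stein's (Corollary~\ref{cor;covered2}) in one stroke: it only uses that a unimodular lattice is \emph{maximal}, invokes Meyer's theorem to get an isotropic vector in $L\otimes\QQ$, and then Kneser's transfer results for maximal lattices to split off one (and, when $\min\{p,q\}\ge 2$, two) hyperbolic planes integrally, plus the analogous local argument over $\ZZ_\primen$ in the $q=1$ case. You instead use the Milnor classification of indefinite even unimodular lattices, $L\cong U^{\min(p,q)}\oplus E_8^{\pm|p-q|/8}$, which makes the splitting $L\cong K\oplus U\oplus U$ for $q\ge 2$ completely explicit, and in the $q=1$ case you verify the local hypothesis of Theorem~\ref{thm;injindeg1_l}(ii) by hand (Chevalley--Warning plus Hensel for odd $\primen$; $E_8\otimes\ZZ_2\cong U^4$ at $\primen=2$). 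Your approach is more concrete and self-contained for the unimodular case, but it does not extend to Stein's corollary, where $L'/L$ is merely anisotropic and no classification is available; the paper's maximality argument covers both at once. One small caveat: your parenthetical claim that \emph{every} unimodular form of rank $2n$ over $\ZZ_\primen$ ($\primen$ odd) is isomorphic to $U^n$ is not literally true in general (it requires discriminant $(-1)^n$ up to squares), but the statement you actually use --- that a unimodular $\ZZ_\primen$-lattice of rank $\ge 3$ is isotropic and hence splits off a hyperbolic plane --- is correct and suffices, and for $E_8^k\otimes\ZZ_\primen$ the discriminant condition holds anyway. Your remark on the parity of $q+\ell$ (needed only for non-vacuousness of $S_L^\kappa$) is consistent with the paper, which likewise makes no use of it in the proof.
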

	\begin{cor}[{\cite[Theorem~$1.1$]{stein}}]\label{cor;covered2}
		Let $p + q > \max(6,2\ell-2,3 + r_0)$. 
		Furthermore, assume that $q + \ell$ and $k + \ell$ are even.
		If~$L'/L$ is anistropic, then the lift~$\KMliftbasel$ is injective. 
	\end{cor}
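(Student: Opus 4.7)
The plan is to deduce the statement from Theorem~\ref{thm;injindeg1_l} by showing that Stein's assumptions force $L$ to satisfy one of the lattice-theoretic splitting conditions appearing there. The key observation is that $L'/L$ being anisotropic is equivalent to $L$ being a maximal even lattice, and that any primitive isotropic vector in a maximal even lattice is contained in a unimodular hyperbolic plane summand: if $v \in L$ is primitive isotropic, then maximality of $L$ forces the divisor of $v$ in $L$ to be~$1$, so there exists $w \in L$ with $(v,w)=1$, and the sublattice $\ZZ v + \ZZ\bigl(w - q(w)v\bigr)$ is isomorphic to $U$ and unimodular, hence splits off as an orthogonal direct summand.

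Assume first $p \geq 2$ and $q \geq 2$. By Meyer's theorem, every indefinite quadratic form over $\QQ$ in at least five variables is isotropic; since $p+q > 6$, the lattice $L$ admits a primitive isotropic vector, and the observation above gives $L \cong L_1 \oplus U$ with $L_1$ maximal of signature $(p-1,q-1)$. Because $p-1, q-1 \geq 1$ and $(p-1)+(q-1) \geq 5$, Meyer's theorem applies again to $L_1$, producing a second hyperbolic splitting $L_1 \cong K \oplus U$. Hence $L \cong K \oplus U(1) \oplus U$, which is exactly the hypothesis of case~\ref{thm;injindeg1;pq>1_l} of Theorem~\ref{thm;injindeg1_l}, and injectivity follows.

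It remains to address the boundary cases. When $q = 1$, the rank condition forces $p \geq 6$, and the argument above gives $L \cong M \oplus U$ with $M$ maximal positive definite of rank $\geq 5$; I would then invoke case~\ref{thm;injindeg1;q=1_l} of Theorem~\ref{thm;injindeg1_l}, provided $M \otimes \ZZ_\primen$ splits off a hyperbolic plane over $\ZZ_\primen$ for every prime~$\primen$, which should follow from the local classification of maximal $\ZZ_\primen$-lattices, whose anisotropic kernels have bounded rank. When $p = 1$, Theorem~\ref{thm;injindeg1_l}\ref{thm;injindeg1;p=1_l} asserts that the lift is identically zero, so the statement reduces to verifying $S^\kappa_L = \{0\}$ under Stein's parity assumptions on $q+\ell$ and $k+\ell$. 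The main obstacle lies in the local splitting verification in the hyperbolic case $q=1$ at the prime $\primen = 2$, where the Jordan decomposition of even $\ZZ_2$-lattices involves several indecomposable types and scaling/parity constraints that have to be tracked explicitly in order to extract the hyperbolic Jordan component.
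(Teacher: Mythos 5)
Your reduction is the same as the paper's: anisotropy of $L'/L$ means $L$ is maximal, Meyer's theorem produces isotropic vectors, and maximality lets you split off hyperbolic planes until you land in case~\ref{thm;injindeg1;pq>1_l} (when $\min\{p,q\}\ge 2$) or case~\ref{thm;injindeg1;q=1_l} (when $q=1$) of Theorem~\ref{thm;injindeg1_l}; your divisor-one argument for splitting off $U$ is just a hands-on substitute for the paper's citation of Kneser's Theorem~14.12, and that part is fine. The genuine gap is exactly the step you flag yourself and leave open: in the case $q=1$ you never verify the hypothesis of Theorem~\ref{thm;injindeg1_l}~\ref{thm;injindeg1;q=1_l}, namely that $M\otimes\ZZ_\primen$ splits off a hyperbolic plane for every prime $\primen$, and you present the prime $\primen=2$ as an unresolved obstacle. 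The paper closes this without any Jordan-type case analysis: since $M'/M\cong L'/L$ is anisotropic, $M$ is maximal, hence $M\otimes\ZZ_\primen$ is a maximal $\ZZ_\primen$-lattice; because $\rank M\ge 5$, the space $M\otimes\QQ_\primen$ is isotropic (Kneser, Theorem~16.3), and a maximal lattice on an isotropic $\QQ_\primen$-space splits off a hyperbolic plane (Kneser, Theorem~14.12), uniformly in $\primen$, including $\primen=2$. In fact your own global argument localizes verbatim and removes the worry you raise: a primitive isotropic vector $v\in M\otimes\ZZ_\primen$ with non-unit divisor would give the nonzero isotropic element $v/\primen$ in the (anisotropic) $\primen$-part of the discriminant form, so the divisor is a unit, and then $\ZZ_\primen v+\ZZ_\primen\bigl(w-q(w)v\bigr)\cong U$ splits off because it is unimodular; evenness of $M$ guarantees $q(w)\in\ZZ_\primen$ also at $\primen=2$. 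As submitted, though, this step is asserted rather than proved, so the $q=1$ case is incomplete.

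A smaller point: your treatment of $p=1$ is not a viable route and is not what the paper does. Reducing the statement to ``$S^\kappa_L=\{0\}$'' cannot work in general (that space is typically nonzero for the weights and anisotropic discriminants in question); the paper's proof only addresses $\min\{p,q\}\ge 2$ and $q=1$, the configurations occurring in Stein's setting, so you should either exclude $p=1$ by the hypotheses of \cite[Theorem~1.1]{stein} or drop that paragraph rather than rest the corollary on a vanishing claim you cannot verify.
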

		
	\begin{proof}[Proof of Corollaries~\ref{cor;covered1} and~\ref{cor;covered2}]
		Recall that the lattice~$L$ is maximal if and only if its discriminant~$L'/L$ is anisotropic. 
		Clearly, unimodular lattices are maximal so that we may reduce to this case and assume further that $\rank L \geq 7$. 
		By Meyer's Theorem, every indefinite quadratic space~$V$ over~$\QQ$ of dimension at least~$5$ splits a hyperbolic plane. In particular, $L \otimes \QQ$ splits a hyperbolic plane and by~\cite[Theorem~$14.12$]{Kneser2002} this transfers to~$L$, so that~$L = \borK \oplus U$ for some maximal lattice~$M$. 
		\\
		Consequently, if~$\min\{p,q\} \geq 2$ then~$L$ splits off two hyperbolic planes and Theorem~\ref{thm;injindeg1_l}~(i) applies. 
		In case~$q = 1$, we have $\rank \borK \geq 5$ so that $\borK \otimes \QQ_\primen$ splits a hyperbolic plane
		for every finite place~$\primen$ (cf.~\cite[Theorem~$16.3$]{Kneser2002}). 
		Again, by~\cite[Theorem~$14.12$]{Kneser2002} the same is true for $\borK \otimes \ZZ_{\primen}$, 
		so that Theorem~\ref{thm;injindeg1_l}~(ii) applies.
	\end{proof}
	
	\printbibliography
\end{document}